\documentclass[11pt,a4paper]{article}

\usepackage[utf8]{inputenc}
\usepackage{amsmath}
\usepackage{amsfonts}
\usepackage[colorlinks,linkcolor=blue,anchorcolor=blue,citecolor=blue]{hyperref}
\usepackage{amssymb}
\usepackage{extarrows}
\usepackage{algorithmic}
\usepackage{algorithm}
\usepackage{caption}
\usepackage{amsthm}
\usepackage{comment}
\usepackage{bm}
\usepackage{tikz}
\usetikzlibrary{arrows.meta}       
\usetikzlibrary{positioning}       
\usetikzlibrary{calc}              
\usetikzlibrary{shapes}  
\usepackage{verbatim}
\usepackage{hyperref}
\usepackage{cite}
\usepackage[numbers]{natbib}
\usepackage{enumerate}
\PassOptionsToPackage{normalem}{ulem}
\usepackage{ulem}
\usepackage[margin=1in]{geometry} 
\usepackage{array}
\usepackage{cases}
\usepackage{mathrsfs}
\usepackage{longtable}
\allowdisplaybreaks[4]
\allowdisplaybreaks
\newcolumntype{L}[1]{>{\raggedright\let\newline\\\arraybackslash\hspace{0pt}}m{#1}}
\newcolumntype{C}[1]{>{\centering\let\newline\\\arraybackslash\hspace{0pt}}m{#1}}
\newcolumntype{R}[1]{>{\raggedleft\let\newline\\\arraybackslash\hspace{0pt}}m{#1}}

\usepackage{color}
\usepackage{dsfont}
\usepackage{marginnote}
\usepackage{enumitem}
\usepackage{graphicx}
\usepackage{comment}
\usepackage{float}
\usepackage{subfigure}

\theoremstyle{plain}
\newtheorem{theorem}{\protect Theorem}[section]
\newtheorem{prop}[theorem]{\protect Proposition}

\newtheorem{lemma}[theorem]{\protect Lemma}
\newtheorem{remark}[theorem]{\protect Remark}
\newtheorem{ass}{\protect Assumption}

\def\d{\mathrm{d}}

\newcommand{\R}{\mathbb{R}}
\newcommand{\E}{\mathbb{E}}

\newcommand{\T}{\top}

\newcommand{\F}{\mathcal{F}}
\newcommand{\Fb}{\mathbb{F}}
\newcommand{\Pb}{\mathbb{P}}

\newcommand{\tr}{\mathrm{tr}}

\newcommand{\pa}{\partial}

\newcommand{\A}{\mathscr{A}}

\title{Model-free Reinforcement Learning for Continuous Time and State: A Stochastic Maximum Principle Approach}

\author{Lijun Bo\thanks{Email: lijunbo@ustc.edu.cn, School of Mathematics and Statistics, Xidian University, Xi'an, 710126, China.}
\and
Yijie Huang\thanks{Email: yijie.huang@polyu.edu.hk, Department of Applied Mathematics, The Hong Kong Polytechnic University, Kowloon, Hong Kong, China.}
\and
Jingfei Wang \thanks{Email: wjf2104296@mail.ustc.edu.cn, School of Mathematical Sciences, University of Science and Technology of China, Hefei, 230026, China.}
}
\date{\vspace{-0.3cm}}

\begin{document}
	\maketitle
	\begin{abstract}
This paper develops a model-free reinforcement learning (RL) algorithm based on the stochastic maximum principle for continuous-time stochastic control problems with continuous state and action spaces. For a parameterized Markovian policy, we establish the existence of the decoupling field for the adjoint backward stochastic differential equation, which allows the Hamiltonian gradient to be represented as a deterministic function of time and state. We then learn this Hamiltonian gradient directly from data and incorporate it into a policy gradient scheme with inexact gradients. 
We establish the convergence of the resulting policy gradient algorithm and establish the corresponding error estimates. Under suitable conditions on learning rates, exploration parameters, and approximation errors, the objective values converge and the $L^2$-norm of the policy gradient vanishes asymptotically. We further show that the proposed SMP-based policy gradient representation is equivalent to the existing continuous-time deterministic policy gradient representation based on the advantage-rate function. The effectiveness and efficiency of the proposed RL algorithm are demonstrated by numerical experiments.
        
		\ \\
        \noindent\textbf{Keywords}: Model-free continuous-time reinforcement learning;  stochastic maximum principle;  deterministic policy gradient; Hamiltonian gradient learning;  backward stochastic differential equations.\\
		\ \\
		\noindent\textbf{MSC (2020)}: 93E20; 49N80; 60H30; 68T05.
	\end{abstract}
	
	\section{Introduction}
    Reinforcement learning (RL) provides a data-driven framework for solving sequential decision-making problems without requiring an explicit model of the underlying environment. In this paper, we consider model-free RL for stochastic control in continuous time and continuous state and action spaces. More precisely, the model coefficients of the controlled stochastic system are assumed to be unknown, and the objective is
    to learn an optimal policy directly from trajectory data.
    The main challenge is therefore to construct an effective policy-gradient
    method that can be implemented without knowledge of the underlying model.
    
    A natural approach to this problem is to optimize a parameterized policy
    $\alpha^\psi$ by gradient descent. Existing continuous-time RL
    methods mainly build upon the dynamic programming principle (DPP) and the
    associated Hamilton-Jacobi-Bellman (HJB) framework. In contrast, the
    stochastic maximum principle (SMP) provides an alternative, variational
    approach to characterizing the policy gradient. In particular, through the
    adjoint backward stochastic differential equation (BSDE), the SMP yields a
    representation of $\nabla_\psi J(\alpha^\psi)$ in terms of the gradient of
    the Hamiltonian with respect to the action.  This representation involves
    an adjoint BSDE, which links
    the policy gradient to the state and adjoint processes along the controlled
    trajectory.
    
	\subsection{Literature Review}
	
	The SMP method is one of the two fundamental
	approaches to stochastic optimal control, together with the
	HJB approach. The SMP characterizes optimal controls through a system of adjoint equations, thereby providing necessary and sufficient optimality conditions for stochastic control problems. The duality-based formulation of the SMP was developed systematically in the early literature, notably by Bismut~\cite{Bismut1978}, and was subsequently extended to nonlinear stochastic control problems. In particular, Peng~\cite{Peng1990} established a general stochastic maximum principle for nonlinear controlled diffusions with possibly nonconvex control domains. A central feature of the SMP is that the associated adjoint processes are characterized by BSDE, while the optimality condition can be expressed in terms of the Hamiltonian. This approach has since been extensively developed for a broad range of stochastic control problems, including partially observed systems~\cite{Haussmann1987,Tang1998}, stochastic systems with delays~\cite{ChenWu2010,Yu2012}, time-inconsistent problem~\cite{Huang,Alia2019,MT2023}, mean-field control (MFC) problems~\cite{Carmona,CarmonaSMP,BWWY}  and extended MFC under general dynamic constraints~\cite{BWY}.
	
	Beyond these theoretical extensions, the SMP offers several advantages
	over the HJB approach. While the HJB approach typically relies heavily on the validity of the DPP, the SMP provides optimality conditions directly through the state and adjoint processes. This makes the SMP particularly attractive for stochastic control problems with structural features for which the DPP approach may not be applicable, such as time-inconsistent problems and stochastic control problems with general dynamic constraints. Moreover, under suitable regularity conditions, the gradient of the objective with respect to the control can be represented in terms of the gradient of the Hamiltonian with respect to the control variable. This gradient-based characterization makes the SMP naturally compatible with gradient-based algorithms, providing a natural foundation for developing data-driven approaches to continuous-time stochastic control.
	
	In parallel, RL has emerged as a data-driven framework for solving sequential decision-making problems without explicit knowledge of the underlying system model. While most classical RL algorithms have been developed in discrete-time  settings, many physical and financial systems evolve continuously in  time, which has motivated a rapidly growing literature on  continuous-time RL theory and algorithms.  To facilitate exploration in continuous-time learning problems, Wang et  al. \cite{WZZ2020} pioneered the incorporation of entropy regularization into continuous-time diffusion models, providing a principled formulation of  stochastic policies. Building upon this idea, Jia and Zhou \cite{JZ2022,JZ2022b,Zhou} established a theoretical framework for model-free continuous-time  RL with stochastic policies by developing policy evaluation, policy gradient, and $q$-learning algorithms through martingale characterizations. Following this line of research, substantial progress has been made in  the theoretical analysis and applications of continuous-time RL with  stochastic policies; see, e.g., \cite{BHY2025,BHYZ2026,GLZ2026,HWZ2023,HWZ2025,TZZ22}.   Related  extensions have also been developed for continuous-time mean-field control problems and mean-filed games \cite{ren26a,ren26b,weiyu2025}, optimal stopping  problems \cite{DSXZ2026,DFX2024,D2024}  as well as optimal switching and impulse control problems \cite{CDY2025,DDL2025,DPW2025,HMYZ2025}.
	
	A large body of existing continuous-time RL frameworks relies on  stochastic policies, often formulated through entropy regularization.  As pointed out by Cheng et al.~\cite{GuoDPG}, although stochastic policies (or relaxed controls) provide a theoretically elegant framework, their implementation in continuous-time environments with continuous state and action spaces can be challenging due to the difficulty of sampling relaxed controls. Moreover, Jia et al. \cite{JOZ2026}  highlighted potential measurability issues arising from continuously sampling actions from stochastic policies.  In contrast, deterministic policy gradient (DPG) methods offer an  attractive alternative in continuous action spaces.  In discrete-time RL, Silver et al. \cite{DLHDWR2014}  showed that deterministic policies can significantly  simplify policy optimization, since the policy gradient depends only on  the state distribution rather than the full state-action distribution. Motivated by these advantages, recent studies have extended deterministic policy gradient methods to continuous-time stochastic control problems.  Cheng et al.~\cite{GuoDPG} derived a continuous-time deterministic policy gradient representation based on the advantage-rate function and developed a model-free algorithm via martingale characterizations. This framework has subsequently been extended to more complex control settings, including time-inconsistent stochastic control problems \cite{GHY2026} and mean-field control problems \cite{CGPZ2026}.
	
	A fundamental challenge in DPG methods is that the actor update requires accurate gradients of the critic with respect to the action variable, rather than merely accurate value estimates. In conventional actor--critic methods, the critic is typically trained  at the level of function values, which does not necessarily guarantee accurate gradients required for policy improvement.  To address this issue, Silver et al.~\cite{DLHDWR2014} introduced compatible function approximation to ensure consistency between critic approximation and the policy gradient structure. More recently, D'Oro and Ja{\'s}kowski \cite{PW2020} demonstrated the importance of directly learning action gradients of the critic and proposed a gradient-aware critic learning approach.  These developments highlight the importance of accurately learning gradient information in policy optimization.
	
	Most existing continuous-time RL methods, including DPG-based approaches, are built upon the DPP framework, while SMP-based RL remains relatively unexplored. Archibald et al.~\cite{YongSMP} developed an SMP-based RL method that combines online estimation of a parameterized environment with backward action learning for policy improvement. However, their approach requires a parametric representation of the unknown environment. This motivates the development of fully model-free SMP-based reinforcement learning methods that directly exploit the gradient structure provided by the SMP.

	\subsection{Our Contributions}
	
	The first contribution of this paper is to develop, to the best of our knowledge, the first data-driven SMP-based algorithm for model-free continuous-time reinforcement learning. Existing SMP-based RL methods, such as Archibald et al.~\cite{YongSMP}, consider a model-based setting, where the coefficients of the controlled system and the cost functional are parameterized by $b^{\theta}$, $\sigma^{\theta}$, $f^{\theta}$ and $g^{\theta}$. They first estimate the model parameter $\theta$ from data via maximum likelihood and then exploit the Hamiltonian representation provided by the SMP to perform policy gradient. In contrast, our approach does not require a parametric model of either the system dynamics or the cost functions. Instead, both the policy and the Hamiltonian gradient information required for policy optimization are learned directly from   data, leading to a fully data-driven and model-free framework for continuous-time reinforcement learning.
	
	The second contribution is a novel policy gradient framework based on the direct learning of the Hamiltonian gradient via the SMP. In contrast to the DPG approach of Cheng et	al.~\cite{GuoDPG}, where the advantage function is first learned and then differentiated to obtain the policy gradient, we directly learn
	the Hamiltonian gradient and use it for policy optimization. To this end, we first establish a decoupling field for the adjoint BSDE in the SMP (see Lemma~\ref{lem:decoupling_field}). This allows us to represent 	the Hamiltonian gradient, for any given policy $\alpha^{\psi}$ as a deterministic function $H_a^{\alpha^{\psi}}(t,x)$ of $(t,x)$ (see
	\eqref{Hamitonian_gradient}). We then learn this function by minimizing	the $L^2$ loss in \eqref{loss_func}, and incorporate the approximation in \eqref{value_approximation} into a gradient descent scheme with inexact gradients.
	
	A key advantage of directly learning the Hamiltonian gradient is that the approximation error can be directly quantified. In contrast, the approach of Cheng et al.~\cite{GuoDPG} first approximates the advantage function and then
	differentiates the learned function. In general, convergence of a sequence of approximating functions does not imply convergence of their gradients, so the approximation error of the advantage function alone does not provide a direct upper bound for the error between the true
	policy gradient and the gradient obtained from the learned function. By learning the Hamiltonian gradient itself, our approach avoids this additional differentiation step and yields both a convergence result and a direct error estimate for the learned gradient (see Proposition~\ref{prop:inner_loop}). Moreover, we establish the convergence of the overall algorithm together with the associated error analysis  (see Theorem~\ref{thm:algorithm_convergence}), which is not
	provided in Cheng et al.~\cite{GuoDPG}. Our approach, however, comes at the cost of a higher exploration cost. Specifically, learning $H_a^{\alpha^{\psi}}(t,x)$ with a parameterized function $h^{\theta}(t,x)$ with $\theta\in\mathbb{R}^q$, requires $q$ explorations, whereas Cheng et al.~\cite{GuoDPG} require only a single
	exploration through a perturbation of the policy. Finally, although the resulting representation of the policy gradient
	$\nabla_{\psi}\mathcal{J}(\psi)$ in \eqref{J_gradient} is formally different from that of Cheng et al.~\cite{GuoDPG}, we establish their equivalence in Lemma \ref{lem:equivalence-dpg}. The underlying
	reason for this difference is that the spatial gradient of the value function $V^{\psi}(t,x)$ in Cheng et al.~\cite{GuoDPG} does not in general coincide with the solution of the adjoint BSDE in SMP.
	
	The third contribution concerns the information required by the learning algorithm. Compared with existing continuous-time RL methods, such as Jia and Zhou~\cite{Zhou} and Cheng et al.~\cite{GuoDPG}, our framework
	requires less information about the cost structure. In their approaches, the agent needs to observe the running cost at any given time along each realized trajectory, as well as the terminal cost at the end of the
	trajectory. In contrast, our algorithm only requires access to the total
	cost
	\[
	J^{\alpha}
	=
	\int_0^T f(t,X_t^{\alpha},\alpha_t)\d t
	+g(X_T^{\alpha})
	\]
	associated with a realized trajectory, without requiring the running cost to be observed at individual time points. Consequently, the proposed framework can be applied to settings where the running and terminal costs are not separately observable and only the total
	performance of a realized trajectory is available.
	
	The remainder of the paper is organized as follows: Section~\ref{sec:problem} formulates the model-free reinforcement learning problem in continuous time and state and introduces the parameterized policy family. Section~\ref{sec:SMP_RL} presents the proposed SMP-based reinforcement learning framework, including learning the Hamiltonian gradient $H_a^{\alpha^{\psi}}$ for a given parameterized policy $\alpha^{\psi}$, the subsequent policy parameter update and the overall algorithm procedure summarized in Algorithm~\ref{main_algorithm}. Section~\ref{sec:main_result} presents the main theoretical results: (i) the convergence and error analysis for the inner loop that learns the Hamiltonian gradient $H_a^{\alpha^{\psi}}$, (ii) the convergence and error analysis for the overall policy gradient algorithm. Section~\ref{sec:numerical} applies the proposed algorithm to two numerical examples and and demonstrates its effectiveness and efficiency. Section \ref{sec:conclusion} summarizes the main results and discusses some future research directions. Finally, Appendix~\ref{sec:proof_main} and Appendix~\ref{sec:proof_auxi} collect the proofs of the main results and auxiliary results in previous sections respectively.

{\bf Notations.}\quad We list below some notations that will be used frequently throughout the paper:
	\begin{center}
		\begin{longtable}{l l}
			$|\cdot|$ & Euclidean norm on Euclidean spaces\\
			$\nabla~(\nabla^2)$ & The (full) gradient (Hessian) operator\\
			$\nabla_x~(\nabla_{x}^2)$ & The (partial) gradient (Hessian) operator with respect to $x$\\
			$L^p((A,\mathscr{B}(A),\lambda_A);E)$ & Set of $L^p$-integrable $E$-valued mapping defined on $(A,\mathscr{B}(A))$\\ 
			& and we write $L^p(A;E)$ for short\\
			$\|\cdot\|_{L^p(A;E)}$ & The $L^p$ norm in $L^p(A;E)$ and we write $\|\cdot\|_{L^p}$ for short\\
			& when the underlying space is clear from the context.\\
			${\tt m}$ & The Lebesgue measure over $[0,T]$.\\
		\end{longtable}
	\end{center} 
	
	\section{Problem Formulation}\label{sec:problem}
	In this section, we formulate the reinforcement learning (RL) problem with continuous time and space in a model-free setting. The objective of the agent is to learn the optimal parameterized Markovian policy for a continuous-time stochastic control problem with unknown coefficients.
	
	Let $T>0$ be a finite horizon and $d,r,l\in\mathbb{N}$. Consider a complete probability space $(\Omega,\F,\Pb)$ which supports a standard $r$-dimensional Brownian motion $B=(B_t)_{t\in [0,T]}$ and a $\R^d$-valued random variable $\xi$ with $\E[|\xi|^4]<\infty$. Introduce the filtration $\Fb=(\F_t)_{t\in [0,T]}$ defined by $\F_t=\sigma(\xi,B_s;s\leq t)$ for $t\in [0,T]$. Without loss of generality (WLOG), we may assume that this filtration $\Fb$ satisfies the usual conditions. Let the policy space be $\R^l$, and denote by $\A[0,T]$ the collection of all $\R^l$-valued $\Fb$-progressively measurable processes $\alpha=(\alpha_t)_{t\in [0,T]}$ satisfying $\E[\int_0^T|\alpha_t|^2\d t]<\infty$. For any $\alpha\in\A[0,T]$, let $X^{\alpha}=(X_t^{\alpha})_{t\in [0,T]}$ evolves as the following controlled stochastic differential equation (SDE):
	\begin{align}\label{SDE_dynamic}
		X_t^{\alpha}=\xi+\int_0^tb(s,X_s^{\alpha},\alpha_s)\d s+\int_0^t\sigma(s,X_s^{\alpha},\alpha_s)\d W_s,\quad \forall t\in [0,T],
	\end{align}
	where, $(b,\sigma):[0,T]\times\R^d\times\R^l\to\R^d\times\R^{d\times r}$ are Borel measurable functions so that Eq.~\eqref{SDE_dynamic} admits a unique (strong) solution. The goal is to minimize the following cost functional over all $\alpha\in\A[0,T]$:
	\begin{align}\label{cost_func}
		J(\alpha):=\E\left[\int_0^Tf(t,X_t^{\alpha},\alpha_t)\d t+g(X_T^{\alpha})\right],
	\end{align}
	where the running cost $f:[0,T]\times\R^d\times\R^l\to\R$ and the terminal cost $g:\R^d\to\R$ are measurable mappings.
	
It is well-known that, under mild conditions (c.f. Haussmann and Lepeltier \cite{Haussmann1990}), the optimal policy $\alpha^*=(\alpha_t^*)_{t\in [0,T]}$ admits a Markovian (feedback) representation. Specifically,  there exists a Borel measurable mapping $\phi:[0,T]\times\R^d\to \R^l$ such that $\alpha^*_t=\phi(t,X_t^*)$, where $X^*=(X_t^*)_{t\in [0,T]}$ denotes the unique solution to the following SDE: 
\begin{align*}
X_t^*=\xi+\int_0^t b(s,X_s^*,\phi(s,X_s^*))\d s +\int_0^t\sigma(s,X_s^*,\phi(s,X_s^*))\d W_s,\quad \forall t\in[0,T].
\end{align*}
Hence, throughout the paper, we will assume that the optimal policy of control problem \eqref{SDE_dynamic}-\eqref{cost_func} admits the Markovian (feedback) form. 
	
In the model-free RL framework, the agent has {\it no access} to the model coefficients $(b,\sigma,f,g)$. Instead, the agent interacts with the environment \eqref{SDE_dynamic} by exploring different policies and improves her policy through the observed states and costs. More precisely, for any admissible control $\alpha\in\A[0,T]$ and a given time discretization given by $0=t_0<t_1<\cdots<t_{N_T}=T$, the agent can only observe $N$ independent realizations of the state-control pairs and the total cost given by
	\begin{align*}
		\left\{\left(X_{t_j}^{\alpha},\alpha_{t_j},J^{\alpha}=\sum_{\ell=0}^{N_T-1}f(t_\ell,X_{t_\ell}^{\alpha},\alpha_{t_\ell})(t_{\ell+1}-t_\ell)+g(X_{t_{N_T}}^{\alpha})\right)\right\}_{j=0}^{N_T}.
	\end{align*}
Correspondingly, we denote these samples by $\{(X_{t_j,i}^{\alpha},\alpha_{t_j,i}, J^{\alpha}_i)\}_{i=1}^{N}$ for $j=0,1,\ldots,N_T$. In order to solve the RL problem for control problem \eqref{SDE_dynamic}-\eqref{cost_func}, we approximate the optimal feedback mapping $\phi:[0,T]\times\mathbb{R}^d\to \R^l$ by a parameterized class of functions described as follows:
	\begin{align}\label{parameterized_policy}
		\left\{u^{\psi}:[0,T]\times\mathbb{R}^d\to \R^l;~\psi\in\mathbb{R}^p\right\},
	\end{align}
where $p\geq1$ is the dimension of parameter space. Lastly, we impose the following assumptions on model coefficients and the parameterized policies throughout the paper.
	\begin{ass}\label{ass1}
		\begin{itemize}
			\item [{\rm (A1)}] The functions $(b,\sigma):[0,T]\times\R^d\times\R^l\to \R^d\times\R^{d\times r}$ are Borel measurable mappings and are twice continuously differentiable in $(x,a)\in\R^d\times\R^l$. Moreover, there exists a constant $M>0$ such that $|\nabla_{(x,a)} (b,\sigma)(t,x,a)|+|\nabla_{(x,a)}^2 (b,\sigma)(t,x,a)|\leq M$ for all $(t,x,a)\in [0,T]\times\R^d\times\R^l$.
			
			\item [{\rm (A2)}] The functions $f:[0,T]\times\R^n\times \R^l\to\R$ and $g:\R^n\to\R$ are Borel measurable mappings and are twice continuously differentiable in $(x,a)\in\R^d\times\R^l$. Moreover, there exists a constant $M>0$ such that $|\nabla_{(x,a)}^2f(t,x,a)|+|\nabla_x^2g(x)|\leq M$  for all $(t,x,a)\in [0,T]\times\R^d\times \R^p$.
			
			\item [{\rm (A3)}]  For any $\psi\in\R^p$, $u^{\psi}$ is jointly continuous and is twice continuously differentiable in $x\in \R^d$. Moreover, there exists a mapping $M_1:[0,T]\to(0,\infty)$ satisfying $M:=\int_0^T|M_1(t)|^2\d t<\infty$ such that $|\nabla_x u^{\psi}(t,x)|+|\nabla_{x}^2u^{\psi}(t,x)|\leq M_1(t)$ for all $(\psi,t,x)\in\R^p\times [0,T]\times\R^d$. Furthermore, assume that $u^{\psi}(t,x)$ is also continuously differentiable in $\psi\in\R^p$ for every $(t,x)\in [0,T]\times\R^d$, and there exists a constant $M>0$ such that, for all $(t,x)\in [0,T]\times\R^d$ and $\psi^1,\psi^2\in\R^p$, 
			\begin{align*}
				\left|u^{\psi^1}(t,x)-u^{\psi^2}(t,x)\right|&+\left|\nabla_xu^{\psi^1}(t,x)-\nabla_xu^{\psi^2}(t,x)\right|\nonumber\\
				&\quad+\left|\nabla_{\psi}u^{\psi^1}(t,x)-\nabla_{\psi}u^{\psi^2}(t,x)\right|\leq M(1+|x|)\left|\psi^1-\psi^2\right|.
			\end{align*}
		\end{itemize}
	\end{ass}
	In particular, the collection of the parameterized polices $u^{\psi}$ satisfying Assumption (A3) above is denoted by $\Psi$. Then, thanks to Assumption~\ref{ass1}, we can deduce that, for any parameterized policy $u^{\psi}\in\Psi$, the following controlled SDE has a unique (strong) solution $X^{\alpha^{\psi}}=(X_t^{\alpha^{\psi}})_{t\in[0,T]}$:
	\begin{align}\label{SDE_parameter}
		X_t^{\alpha^{\psi}}=\xi+\int_0^t b(s,X_s^{\alpha^{\psi}},
		u^{\psi}(s,X_s^{\alpha^{\psi}}))\d s+\int_0^t \sigma(s,X_s^{\alpha^{\psi}},u^{\psi}(s,X_s^{\alpha^{\psi}}))\d W_s,
	\end{align}
	where, $\alpha^{\psi}=(\alpha_t^{\psi})_{t\in[0,T]}$ denotes the feedback control induced by $u^{\psi}$ in the sense that $\alpha_t^{\psi}=u^{\psi}(t,X_t^{\alpha^{\psi}})$ for $t\in[0,T]$. The following lemma provides a priori moment estimates for $X^{\alpha^{\psi}}$.
\begin{lemma}\label{lem:X_moment}
Let Assumption~\ref{ass1} hold. There exists a constant $C>0$ depending only on $T$ and $M$ introduced in Assumption~\ref{ass1} such that $\E[\sup_{t\in[0,T]}|X_t^{\alpha^{\psi}}|^4]\leq C(1+\E\left[|\xi|^4\right])$ for all $\psi\in\R^p$. 
\end{lemma}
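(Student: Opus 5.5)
The plan is the standard combination of Burkholder--Davis--Gundy (BDG) and Gronwall for the closed-loop SDE \eqref{SDE_parameter}. The only twist is that the Lipschitz constant of the closed-loop coefficients in $x$ is the time-dependent function $M(1+M_1(t))$, not a constant. Set $\tilde b(t,x):=b(t,x,u^{\psi}(t,x))$ and $\tilde\sigma(t,x):=\sigma(t,x,u^{\psi}(t,x))$.

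\textbf{Step 1: linear growth of the closed-loop coefficients.} By (A1), $b$ and $\sigma$ are $M$-Lipschitz in $(x,a)$. By (A3), $|u^{\psi}(t,x)|\le |u^{\psi}(t,0)|+M_1(t)|x|$. Together these give
\[
|\tilde b(t,x)|+|\tilde\sigma(t,x)|\le |b(t,0,0)|+|\sigma(t,0,0)|+M|u^{\psi}(t,0)|+M(1+M_1(t))|x|.
\]
To turn this into $|\tilde b(t,x)|+|\tilde\sigma(t,x)|\le L(t)(1+|x|)$ with $L(t):=C_M(1+M_1(t))$ and $L\in L^2([0,T])$, one needs $|b(t,0,0)|$, $|\sigma(t,0,0)|$ and $|u^{\psi}(t,0)|$ to be bounded by $M$ uniformly in $(t,\psi)$. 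I read these as implicit in Assumption~\ref{ass1}, since the statement is uniform in $\psi$. This is the point I expect to be the real obstacle: the uniformity in $\psi$ rests entirely on it. If $|u^{\psi}(t,0)|$ were not uniformly bounded, the constant would have to depend on $\psi$, for instance through $\sup_t|u^{\psi}(t,0)|\le \sup_t|u^{0}(t,0)|+M|\psi|$ via (A3). Since $M_1\in L^2$ and $T<\infty$, we also have $L\in L^1$.

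\textbf{Step 2: localized moment estimate.} Let $\tau_n:=\inf\{t:|X_t^{\alpha^{\psi}}|\ge n\}\wedge T$ and $\varphi_n(t):=\E[\sup_{s\le t\wedge\tau_n}|X_s^{\alpha^{\psi}}|^4]$, which is finite. Use $|a+b+c|^4\le 27(|a|^4+|b|^4+|c|^4)$.
\begin{itemize}
\item For the drift, apply H\"older with the weight $L$: $\bigl(\int_0^t L(s)(1+|X_s|)\d s\bigr)^4\le \bigl(\int_0^T L\bigr)^3\int_0^t L(s)(1+|X_s|)^4\d s$.
\item For the martingale part, BDG gives $\E\sup_{s\le t}|\int_0^s\tilde\sigma\,\d W|^4\le C\,\E\bigl(\int_0^t L(s)^2(1+|X_s|)^2\d s\bigr)^2$.
\item Cauchy--Schwarz then bounds the last quantity by $C\bigl(\int_0^T L^2\bigr)\int_0^t L(s)^2\,\E(1+|X_s|)^4\d s$.
\end{itemize}
Combining these yields
\[
\varphi_n(t)\le C\bigl(1+\E|\xi|^4\bigr)+C\int_0^t\bigl(L(s)+L(s)^2\bigr)\varphi_n(s)\,\d s ,
\]
where $C$ depends only on $T$, $M$ and $\int_0^T M_1^2$.

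\textbf{Step 3: Gronwall and removal of the localization.} Apply Gronwall's inequality with the integrable weight $L+L^2$ to get $\varphi_n(T)\le C(1+\E|\xi|^4)\exp\bigl(C\int_0^T(L+L^2)\bigr)$. This bound is independent of $n$ and $\psi$. Since $\tau_n\uparrow T$ by continuity of paths, Fatou's lemma then gives the claim.
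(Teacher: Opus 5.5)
Your proposal is correct and follows essentially the paper's route: a closed-loop growth bound with a time-dependent weight built from $M_1\in L^2$, then BDG and Gronwall with an integrable weight. The differences are minor: the paper applies It\^o's formula to $|X_t^{\alpha^{\psi}}|^4$ before using BDG instead of estimating the integral equation directly, and it omits the localization you include.

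The caveat you raise is genuine, and the paper glosses over it. Its remark that the Lipschitz bound ``also implies the linear growth'' silently needs $|b(t,0,0)|$, $|\sigma(t,0,0)|$ and $|u^{\psi}(t,0)|$ to be bounded uniformly in $(t,\psi)$. For instance, with $d=l=p=1$, $b(t,x,a)=a$, $\sigma\equiv 0$ and $u^{\psi}\equiv\psi$, Assumption~\ref{ass1} holds, yet $\E[\sup_{t}|X_t^{\alpha^{\psi}}|^4]=|\psi|^4T^4$ when $\xi=0$. So without that extra bound, only the $\psi$-dependent constant you describe is available.
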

	
We are thus led to consider the following parameterized optimization problem described as:
	\begin{align}\label{cost_func_parameter}
		\mathcal{J}(\psi):=\E\left[\int_0^T f(t,X_t^{\alpha^{\psi}},\alpha_t^{\psi})\d t+g(X_T^{\alpha^{\psi}})\right]\to\inf_{\psi\in\R^p}\mathcal{J}(\psi).
	\end{align}
	In view of Lemma~\ref{lem:X_moment} and Assumption~\ref{ass1}-(A2), the optimization problem \eqref{cost_func_parameter} is well-posed. Moreover, it holds that
	\begin{align}\label{lower_bound}
		\inf_{\psi\in\R^p}\mathcal{J}(\psi)>-\infty.
	\end{align}
	
	\section{SMP-based Reinforcement Learning}\label{sec:SMP_RL}
	In this section, we introduce our RL algorithm based on the stochastic maximum principle (SMP). To this end, we use $\langle\cdot,\cdot\rangle_{L^2}$ to denote the $L^2$-inner product on $L^2([0,T]\times\Omega;\R^l)$, i.e., $\langle \alpha,\beta\rangle_{L^2}:=\E[\int_0^T\alpha_t^{\T}\beta_t\d t]$ for any $\alpha,\beta\in L^2([0,T]\times\Omega;\R^l)$. Furthermore, denote by $\|\cdot\|_{L^2}$ the norm induced by $\langle\cdot,\cdot\rangle_{L^2}$.
	
	\subsection{The Hamiltonian Gradient}
	
	The Hamiltonian $H:[0,T]\times\R^d\times\R^l\times\R^d\times\R^{d\times r}\to\R$ corresponding to control problem \eqref{SDE_dynamic}-\eqref{cost_func} is given by
	\begin{align}\label{Hamiltonian}
		H(t,x,a,y,z)=b(t,x,a)^{\T}y+\tr\left(\sigma(t,x,a)^{\T}z\right)+f(t,x,a).
	\end{align}
	For any admissible control strategy $\alpha\in\A[0,T]$, let $(Y^{\alpha},Z^{\alpha})=(Y_t^{\alpha},Z_t^{\alpha})_{t\in [0,T]}$ be the unique $\R^d\times\R^{d\times r}$-valued solution to the following BSDE given by
	\begin{align}\label{eq:BSDE}
		\d Y_t^{\alpha}=-\nabla_xH(t,X_t^{\alpha},\alpha_t,Y_t^{\alpha},Z_t^{\alpha})\d t+Z_t^{\alpha}\d W_t,\quad Y_T^{\alpha}=\nabla_xg(X_T^{\alpha}),
	\end{align} 
	where, $X^{\alpha}=(X_t^{\alpha})_{t\in [0,T]}$ follows the controlled dynamics described as \eqref{SDE_dynamic}. In light of the classical SMP (c.f. Peng \cite{Peng1990} and Lemma 3.1 in Acciaio et al. \cite{CarmonaSMP}), we have, for any $\alpha,\beta\in\A[0,T]$, \begin{align}\label{value_approximation}
		\frac{J(\alpha+\epsilon\beta)-J(\alpha)}{\epsilon}=\E\left[\int_0^T\nabla_aH(t,X_t^{\alpha},\alpha_t,Y_t^{\alpha},Z_t^{\alpha})^{\T}\beta_t\d t\right]+O(\epsilon\|\beta\|_{L^2}^2),
	\end{align}
as $\epsilon\to 0$. Here, by convention, the notation $O(\epsilon\|\beta\|_{L^2}^2)$ means that there exists a constant $C>0$ independent of $\epsilon$ and $\beta$ such that $O(\epsilon\|\beta\|_{L^2}^2)\leq C\epsilon\|\beta\|_{L^2}^2$. As a result, we can compute the gradient of the cost functional $\mathcal{J}(\psi)$ with respect to $\psi\in\R^p$ by following directly from \eqref{value_approximation} and the chain rule.
\begin{lemma}\label{lem:J_gradient}
Let Assumption~\ref{ass1} hold. The parameterized function $\mathcal{J}(\psi)$ defined in \eqref{cost_func_parameter} is differentiable with respect to $\psi\in\R^p$. Moreover, it holds that
\begin{align}\label{J_gradient}
&\nabla_{\psi}\mathcal{J}(\psi)\nonumber\\
&\quad=\E\left[\int_0^T\left(\nabla_{\psi}u^{\psi}(t,X_t^{\alpha^{\psi}})+\nabla_xu^{\psi}(t,X_t^{\alpha^{\psi}})\nabla_{\psi}X_t^{\alpha^{\psi}}\right)^{\T}\nabla_aH(t,X_t^{\alpha^{\psi}},\alpha_t^{\psi},Y_t^{\alpha^{\psi}},Z_t^{\alpha^{\psi}})\d t\right].
\end{align}
Here, the state-control pair $(X^{\alpha^{\psi}},\alpha^{\psi})$ is introduced in \eqref{SDE_parameter} and the gradient process $\nabla_{\psi}X^{\alpha^{\psi}}=(\nabla_{\psi}X_t^{\alpha^{\psi}})_{t\in [0,T]}$ is the unique solution to the following SDE, for $t\in[0,T]$,
		\begin{align}\label{X_gradient}
			\begin{cases}
				\displaystyle d(\nabla_{\psi}X^{\alpha^{\psi}}_t)=\left(\nabla_xb(t,X_t^{\alpha^{\psi}},\alpha_t^{\psi})+\nabla_ab(t,X_t^{\alpha^{\psi}},\alpha_t^{\psi})\nabla_xu^{\psi}(t,X_t^{\alpha^{\psi}})\right)\nabla_{\psi}X_t^{\alpha^{\psi}}\d t\\[1em]
\displaystyle\qquad\quad+\left(\nabla_x\sigma(t,X_t^{\alpha^{\psi}},\alpha_t^{\psi})+\nabla_a\sigma(t,X_t^{\alpha^{\psi}},\alpha_t^{\psi})\nabla_xu^{\psi}(t,X_t^{\alpha^{\psi}})\right)\nabla_{\psi}X_t^{\alpha^{\psi}}\d W_t\\[1em]
\displaystyle\qquad\quad+\nabla_ab(t,X_t^{\alpha^{\psi}},\alpha_t^{\psi})\nabla_{\psi}u^{\psi}(t,X_t^{\alpha^{\psi}})\d t+\nabla_a\sigma(t,X_t^{\alpha^{\psi}},\alpha_t^{\psi})\nabla_{\psi}u^{\psi}(t,X_t^{\alpha^{\psi}})\d W_t,\\[1em]
\displaystyle\nabla_{\psi}X^{\alpha^{\psi}}_0=\boldsymbol{0}_{d\times p}.
\end{cases}
\end{align}
\end{lemma}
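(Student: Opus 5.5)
We compute the gradient by applying the first-order expansion \eqref{value_approximation} to the perturbation of the feedback control generated by a change of parameter, and then pass to the limit using the chain rule. Fix $\psi\in\R^p$ and a direction $e\in\R^p$. For $\epsilon\neq0$ write $X^\epsilon:=X^{\alpha^{\psi+\epsilon e}}$ and $\alpha^\epsilon_t:=u^{\psi+\epsilon e}(t,X^\epsilon_t)$, so that $\mathcal{J}(\psi+\epsilon e)=J(\alpha^\epsilon)$. Set $\beta^\epsilon:=(\alpha^\epsilon-\alpha^\psi)/\epsilon\in\A[0,T]$. Then \eqref{value_approximation} gives
\begin{align*}
\frac{\mathcal{J}(\psi+\epsilon e)-\mathcal{J}(\psi)}{\epsilon}=\E\left[\int_0^T\nabla_aH(t,X_t^{\alpha^\psi},\alpha_t^\psi,Y_t^{\alpha^\psi},Z_t^{\alpha^\psi})^{\T}\beta^\epsilon_t\d t\right]+O(\epsilon\|\beta^\epsilon\|_{L^2}^2).
\end{align*}
It therefore suffices to prove two things. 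First, $\sup_{0<|\epsilon|\le1}\|\beta^\epsilon\|_{L^2}<\infty$, so that the remainder vanishes. Second, $\beta^\epsilon\to\beta:=\nabla_\psi u^\psi(t,X^{\alpha^\psi}_t)e+\nabla_xu^\psi(t,X^{\alpha^\psi}_t)\nabla_\psi X^{\alpha^\psi}_te$ in $L^2$. Since $\nabla_aH(\cdot)$ is in $L^2$, these two facts yield the directional derivative in the form \eqref{J_gradient}. Continuity of this expression in $\psi$ then upgrades Gâteaux differentiability to differentiability.

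\textbf{Step 1 (existence and moments of the derivative SDE).} The SDE \eqref{X_gradient} is linear in $\nabla_\psi X$. Its coefficients $\nabla_{x}b+\nabla_ab\nabla_xu^\psi$ are bounded by $M(1+M_1(t))$, with $M_1\in L^2(0,T)$. Its inhomogeneous terms are bounded by $M|\nabla_\psi u^\psi|\le C(1+|X_t|)$, which follows from (A3) by comparison with a fixed $\psi_0$. Standard linear SDE theory with a BDG/Gronwall argument, using Lemma~\ref{lem:X_moment}, then gives a unique solution with $\E\sup_t|\nabla_\psi X_t|^2<\infty$. Here Gronwall is applied with the integrable weight $(1+M_1(t))^2$.

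\textbf{Step 2 (Lipschitz estimate and the bound on $\beta^\epsilon$).} Let $\Delta^\epsilon:=(X^\epsilon-X^{\alpha^\psi})/\epsilon$. We split
\begin{align*}
u^{\psi+\epsilon e}(t,X^\epsilon_t)-u^\psi(t,X_t)=\left[u^{\psi+\epsilon e}(t,X^\epsilon_t)-u^{\psi}(t,X^\epsilon_t)\right]+\left[u^\psi(t,X^\epsilon_t)-u^\psi(t,X_t)\right].
\end{align*}
By (A3), the first bracket is bounded by $M\epsilon(1+|X^\epsilon_t|)$ and the second by $M_1(t)|X^\epsilon_t-X_t|$. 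Inserting these into the difference of the two SDEs, with $b,\sigma$ Lipschitz by (A1), a Gronwall argument gives $\E\sup_t|\Delta^\epsilon_t|^2\le C$ uniformly in $\epsilon$. The bound on $\|\beta^\epsilon\|_{L^2}$ then follows, because $\int_0^T M_1^2\,\d t<\infty$ controls the second bracket.

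\textbf{Step 3 (convergence $\Delta^\epsilon\to\nabla_\psi X\,e$).} This is the main obstacle. We write the equation for $\Delta^\epsilon$ as a linear SDE whose coefficients are integrated Taylor terms such as $\int_0^1\nabla_xb(t,X_t+\lambda(X^\epsilon_t-X_t),\cdot)\d\lambda$. Subtracting \eqref{X_gradient} applied to $e$, we estimate the difference $\Delta^\epsilon-\nabla_\psi X\,e$ by Gronwall, which leaves error terms of the form (coefficient difference)$\times\Delta^\epsilon$. These are controlled using the boundedness of the second derivatives in (A1) and (A3), which make the first derivatives Lipschitz, and the Lipschitz dependence of $\nabla_\psi u^\psi$ and $\nabla_x u^\psi$ on $\psi$ in (A3). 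Each error term is then bounded by $C\epsilon(1+M_1(t))(1+|X_t|+|X^\epsilon_t|)|\Delta^\epsilon_t|$ times suitable factors. This is where fourth moments are needed: we use Cauchy–Schwarz with Lemma~\ref{lem:X_moment}, and, if necessary, first prove $\E\sup|\Delta^\epsilon|^4\le C$ exactly as in Step 2. The result is $\E\sup_t|\Delta^\epsilon_t-\nabla_\psi X_te|^2\to0$.

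\textbf{Step 4 (conclusion).} Convergence of $\beta^\epsilon$ to $\beta$ in $L^2$ follows from Step 3 by the same decomposition as in Step 2. This proves the directional derivative formula. Continuity of \eqref{J_gradient} in $\psi$ follows from analogous stability estimates for $(X,\nabla_\psi X)$ together with the continuity of the adjoint pair $(Y,Z)$ under changes of $\alpha$, obtained from standard BSDE stability. This yields differentiability of $\mathcal{J}$.
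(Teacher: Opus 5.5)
Your proposal is correct and follows the paper's route: the paper obtains \eqref{J_gradient} directly from the expansion \eqref{value_approximation} together with the chain rule, and your Steps 2--4 supply the details it leaves implicit, namely the uniform $L^2$ bound and $L^2$ convergence of the difference quotients (paralleling Lemma~\ref{lem:X_gradient_approx}) and continuity in $\psi$ (paralleling Lemmas~\ref{lem:Hastable} and~\ref{lem:gradientstable}). One point deserves to be stated explicitly: inserting the $\epsilon$-dependent direction $\beta^\epsilon$ into \eqref{value_approximation} relies on the remainder constant there being independent of $\beta$, which is exactly how the paper formulates that expansion.
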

By applying the SMP (cf. Theorem~3.12 in Bo et al.~\cite{BWY}), a necessary condition for the optimal policy parameter $\psi^*$ is given by $\nabla_a H(t,X_t^{\alpha^{\psi^*}},\alpha_t^{\psi^*},Y_t^{\alpha^{\psi^*}},Z_t^{\alpha^{\psi^*}})=0$, ${\tt m}\otimes\Pb\text{-}$a.s.. Consequently, it follows from Lemma \ref{lem:J_gradient} that $\nabla_{\psi}\mathcal{J}(\psi^*)=0$. 

We now fix an arbitrary policy $u^{\psi}\in\Psi$. The next lemma states that under the feedback policy $\alpha^{\psi}=(\alpha_t^{\psi})_{t\in[0,T]}$, the BSDE solution $(Y^{\alpha^{\psi}},Z^{\alpha^{\psi}})=(Y_t^{\alpha^{\psi}},Z_t^{\alpha^{\psi}})_{t\in[0,T]}$ admits a decoupling field, whose proof is delegated to the Appendix~\ref{sec:proof_auxi}.
\begin{lemma}\label{lem:decoupling_field}
Let Assumption~\ref{ass1} hold. Consider the following forward and backward stochastic differential equation (FBSDE) given by 
\begin{align}\label{eq:FBSDES}
\begin{cases}
\displaystyle\d X_t^{\alpha^{\psi}}=b(t,X_t^{\alpha^{\psi}},u^{\psi}(t,X_t^{\alpha^{\psi}}))\d t+\sigma(t,X_t^{\alpha^{\psi}},u^{\psi}(t,X_t^{\alpha^{\psi}}))\d W_t,\quad X_0^{\alpha^{\psi}}=\xi,\\[1em]
\displaystyle\d Y_t^{\alpha^{\psi}}=-\nabla_xH(t,X_t^{\alpha^{\psi}},u^{\psi}(t,X_t^{\alpha^{\psi}}),Y_t^{\alpha^{\psi}},Z_t^{\alpha^{\psi}})\d t+Z_t^{\alpha^{\psi}}\d W_t,\quad Y_T^{\alpha^{\psi}}=\nabla_xg(X_T^{\alpha^{\psi}}).
\end{cases} 
\end{align}
Then, FBSDE \eqref{eq:FBSDES} has a jointly continuous decoupling field $v^{\psi}:[0,T]\times \R^d\to\R^d$ that is continuously differentiable with respect to the spatial variable $x\in\R^d$. More precisely, it holds that, ${\tt m}\otimes \Pb$-a.s. 
\begin{align}\label{decoupling_field}
Y_t^{\alpha^{\psi}}=v^{\psi}(t,X_t^{\alpha^{\psi}}),\quad Z_t^{\alpha^{\psi}}=\nabla_x v^{\psi}(t,X_t^{\alpha^{\psi}})\sigma(t,X_t^{\alpha^{\psi}},u^{\psi}(t,X_t^{\alpha^{\psi}})),\quad \forall t\in[0,T].
\end{align}
Furthermore, there exists a constant $C>0$ depending on $T$ and $M$ introduced in Assumption~\ref{ass1} only such that
\begin{align}\label{gradient_bound}
\sup_{(t,x)\in [0,T]\times\R^d}\left|\nabla_xv^{\psi}(t,x)\right|\leq C.
\end{align}
\end{lemma}
	
As presented in Lemma~\ref{lem:J_gradient}, the computation of the gradient $\nabla_{\psi}\mathcal{J}(\psi)$ will boil down to evaluating the Hamiltonian gradient $\nabla_aH(t,X_t^{\alpha^{\psi}},\alpha_t^{\psi},Y_t^{\alpha^{\psi}},Z_t^{\alpha^{\psi}})$. Hence, by utilizing the decoupling field $v^{\psi}$ obtained in Lemma~\ref{lem:decoupling_field}, we may define the following mapping $H^{\alpha^{\psi}}_a:[0,T]\times\R^d\to\R^l$ by, for any $(t,x)\in [0,T]\times\R^d$,
	\begin{align}\label{Hamitonian_gradient}
		H^{\alpha^{\psi}}_a(t,x):=\nabla_aH\left(t,x,u^{\psi}(t,x),v^{\psi}(t,x),\nabla_x v^{\psi}(t,x)\sigma\left(t,x,u^{\psi}(t,x)\right)\right).
	\end{align}
	Thus, we have $H_a^{\alpha^{\psi}}(t,X_t^{\alpha^{\psi}})=\nabla_aH(t,X_t^{\alpha^{\psi}},\alpha_t^{\psi},Y_t^{\alpha^{\psi}},Z_t^{\alpha^{\psi}})$ for $t\in[0,T]$. Consequently, the agent only needs to learn the mapping $H_a^{\alpha^{\psi}}$, evaluate it along the state trajectory $(t,X_t^{\alpha^{\psi}})_{t\in [0,T]}$ and perform gradient descent according to the gradient \eqref{J_gradient}.
	
\subsection{Exploration: Learning $H_a^{\alpha^{\psi}}(t,x)$}
	
In order to approximate the mapping $H_a^{\alpha^{\psi}}$ given by \eqref{Hamitonian_gradient}, we consider a family of parametrized mappings which is given by 
\begin{align}\label{parameterized_policynablazH}
\left\{h^{\theta}:[0,T]\times\R^d\to\R^l;~\theta\in\R^q\right\},   
\end{align}
where, $q\geq1$ is the dimension of parameter space. We then impose the following assumption on the paramterized family.
\begin{ass}\label{ass2}
For any $(t,x)\in[0,T]\times\R^d$, $\theta\to h^{\theta}(t,x)$ is twice continuously differentiable. There exists a positive mapping $t\to M_1(t)$ satisfying $M:=\int_0^TM_1(t)^2\d t<\infty$ such that, for any $(\theta,t,x)\in\R^q\times[0,T]\times\R^d$,
\begin{align*}
\left|h^{\theta}(t,x)\right|+\left|\nabla_{\theta}h^{\theta}(t,x)\right|+\left|\nabla_{\theta}^2h^{\theta}(t,x)\right|\leq M_1(t)(1+|x|).
\end{align*}
\end{ass}
	
To learn $H_a^{\alpha^{\psi}}$ by the above family of parameterized functions, we adopt the following squared  loss function described as, for $(\theta,\psi)\in\R^q\times\R^p$,
\begin{align}\label{loss_func}
L(\theta;\psi)=\frac12\E\left[\int_0^T\left|h^{\theta}(t,X_t^{\alpha^{\psi}})-H_a^{\alpha^{\psi}}(t,X_t^{\alpha^{\psi}})\right|^2\d t\right],
\end{align}
where, $h^{\theta}:[0,T]\times\R^d\to\R^l$ is the parameterized function in \eqref{parameterized_policynablazH} satisfying Assumption \ref{ass2}. A simple calculation yields that
\begin{align}\label{loss_gradient}
\nabla_{\theta}L(\theta;\psi)&=\E\left[\int_0^T\nabla_{\theta}h^{\theta}(t,X_t^{\alpha^{\psi}})^{\T}h^{\theta}(t,X_t^{\alpha^{\psi}})\d t\right]\nonumber\\
&\quad-\E\left[\int_0^T\nabla_{\theta}h^{\theta}(t,X_t^{\alpha^{\psi}})^{\T}H^{\alpha^{\psi}}_a(t,X_t^{\alpha^{\psi}})\d t\right].
\end{align}
However, the 2nd term in \eqref{loss_gradient} can not be computed directly since it concludes $H_a^{\alpha^{\psi}}$. To overcome this difficulty, for any $\epsilon>0$, we can utilize \eqref{value_approximation} to approximate it by using 
	\begin{align*}
		\frac{J(\alpha^{\psi}+\epsilon\pa_{\theta_l}h^{\theta}(\cdot,X_{\cdot}^{\alpha^{\psi}}))-J(\alpha^{\psi})}{\epsilon},\quad \forall l\in \{1,\dots,q\}.
	\end{align*}
	Furthermore, denote by $\boldsymbol{e}^{\psi}_{\theta}=(e^{\psi}_{\theta_1},\dots,e_{\theta_q}^{\psi})^{\T}$ the corresponding error term, i.e., for any $l\in\{1,\dots,q\}$, the element $e^{\psi}_{\theta_l}$ is defined by
	\begin{align}\label{eq:errorlth}
		e^{\psi}_{\theta_l}:=\E\left[\int_0^T\partial_{\theta_l}h^{\theta}(t,X_t^{\alpha^{\psi}})^{\T}H^{\alpha^{\psi}}_a(t,X_t^{\alpha^{\psi}})\d t\right]-\frac{J(\alpha^{\psi}+\epsilon\pa_{\theta_l}h^{\theta}(\cdot,X_{\cdot}^{\alpha^{\psi}}))-J(\alpha^{\psi})}{\epsilon}.
	\end{align}
It follows from \eqref{value_approximation} that, there exists a constant $C>0$ depending on $T$ and $M$ introduced in Assumption~\ref{ass1} only such that
\begin{align}\label{error_estimate}
\left|\boldsymbol{e}^{\psi}_{\theta}\right|\leq C\epsilon\left\|\nabla_{\theta}h^{\theta}\left(\cdot,X_{\cdot}^{\alpha^{\psi}}\right)\right\|_{L^2}^2.
\end{align}
As a consequence, we can perform gradient descent with errors to update the parameter $\theta$ in the following form:
\begin{align}\label{theta_update}
\theta\leftarrow\theta-\eta\big(\nabla_{\theta}L(\theta;\psi)+\boldsymbol{e}^{\psi}_{\theta}\big),
\end{align}
where, $\eta>0$ denotes the learning rate.
	
\subsection{Exploitation: Policy Gradient}
For a fixed $\psi\in\R^p$, the iteration \eqref{theta_update} is performed until a prescribed stopping criterion is met or a maximum number of iterations is reached, and the resulting parameter is denoted by $\theta^*(\psi)$. We then proceed to learn the optimal policy $\alpha^{\psi}$ by using the policy gradient method. To do it, we first define the learning error process for $H_a^{\alpha^\psi}(t,x)$ by
	\begin{align}\label{error_process}
		I_t^\psi:=h^{\theta^*(\psi)}(t,X_t^{\alpha^\psi})-H_a^{\alpha^\psi}(t,X_t^{\alpha^\psi}),\quad \forall t\in[0,T].
	\end{align}
	Choose $\delta>0$ and define the $\R^{d\times p}$-valued finite difference process $\Delta^{\psi,\delta}=(\Delta^{\psi,\delta}_t)_{t\in [0,T]}$ by, for any $t\in[0,T]$,
	\begin{align}\label{finite_difference}
		\Delta^{\psi,\delta}_t&:=\left(\Delta_t^{\psi,\delta,1},\dots,\Delta_t^{\psi,\delta,p}\right),\quad
		\Delta_t^{\psi,\delta,m}:=\frac{X_t^{\alpha^{\psi+\delta \boldsymbol{v}_m}}-X_t^{\alpha^\psi}}{\delta},\quad \forall m=1,\dots,p.
	\end{align}
	Here, $\boldsymbol{v}_m$ denotes the $m$-th canonical basis vector of $\mathbb R^p$. 
	
	The next lemma states that the finite difference proces $\Delta^{\psi,\delta}$ is an appropriate approximation of $\nabla_{\psi}X^{\alpha^{\psi}}$, whose proof is deferred to Appendix~\ref{sec:proof_auxi}.
	\begin{lemma}\label{lem:X_gradient_approx}
		Let Assumption~\ref{ass1} hold. Recall the gradient process $\nabla_{\psi}X^{\alpha^{\psi}}=(\nabla_{\psi}X_t^{\alpha^{\psi}})_{t\in[0,T]}$ given in \eqref{X_gradient}. Define the error process $F^{\psi,\delta}=(F_t^{\psi,\delta})_{t\in [0,T]}$ by $F_t^{\psi,\delta}:=\Delta^{\psi,\delta}_t-\nabla_{\psi}X^{\alpha^{\psi}}_t$ for $t\in[0,T]$. Then, there exists a constant $C>0$ depending on $\psi$, $T$ and $M$ introduced in Assumption~\ref{ass1} only such that
		\begin{align}\label{X_gradient_approx}
			\E\left[\sup_{t\in [0,T]}\left|F_t^{\psi,\delta}\right|^2\right]\leq C\delta^2.
		\end{align}
	\end{lemma}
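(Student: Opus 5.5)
The plan is to write an SDE for $F^{\psi,\delta}$ that is linear in $F^{\psi,\delta}$ itself, plus a source term of order $\delta$, and then close the estimate with the Burkholder--Davis--Gundy inequality and Gronwall's lemma. Fix $m\in\{1,\dots,p\}$ and write $X:=X^{\alpha^{\psi}}$, $X^\delta:=X^{\alpha^{\psi+\delta\boldsymbol{v}_m}}$, and $\tilde b(t,x,\psi):=b(t,x,u^{\psi}(t,x))$, with $\tilde\sigma$ defined analogously. Subtracting the two state equations and dividing by $\delta$ gives
\begin{align*}
\d\Delta^{\psi,\delta,m}_t=\frac{\tilde b(t,X_t^\delta,\psi+\delta\boldsymbol{v}_m)-\tilde b(t,X_t,\psi)}{\delta}\d t+\frac{\tilde\sigma(t,X_t^\delta,\psi+\delta\boldsymbol{v}_m)-\tilde\sigma(t,X_t,\psi)}{\delta}\d W_t .
\end{align*}
Applying the fundamental theorem of calculus along the segment $\lambda\mapsto(X_t+\lambda(X^\delta_t-X_t),\psi+\lambda\delta\boldsymbol{v}_m)$, the drift quotient becomes $A^\delta_t\Delta^{\psi,\delta,m}_t+G^\delta_t$. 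Here $A^\delta_t$ and $G^\delta_t$ are the averages over $\lambda\in[0,1]$ of $\nabla_x\tilde b$ and $\partial_{\psi_m}\tilde b$ evaluated at the intermediate points. The diffusion quotient decomposes in the same way. Subtracting the $m$-th column of \eqref{X_gradient}, whose coefficients are $A_t=\nabla_xb+\nabla_ab\nabla_xu^{\psi}$ and $G_t=\nabla_ab\nabla_{\psi_m}u^{\psi}$ evaluated along $(t,X_t)$, yields
\begin{align*}
\d F^{m}_t=\big(A^\delta_tF^m_t+(A^\delta_t-A_t)\nabla_{\psi_m}X_t+(G^\delta_t-G_t)\big)\d t+(\text{analogous diffusion terms})\d W_t,\quad F^m_0=0.
\end{align*}

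Before using this equation, we need two preliminary bounds.

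\textbf{Step 1: coefficient bounds.} By (A1) and (A3), $|A^\delta_t|\le M+MM_1(t)$. The factor $M_1$ is only square integrable in time, which is acceptable for the stochastic integral through BDG, and for the drift through Cauchy--Schwarz with $\int_0^TM_1^2\,\d t<\infty$.

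\textbf{Step 2: a priori moments.} We need $\E[\sup_t|\nabla_{\psi}X_t|^4]\le C$ and $\E[\sup_t|X^\delta_t-X_t|^4]\le C\delta^4$. Both follow from the same linear-SDE argument combined with Lemma~\ref{lem:X_moment}, using $|\nabla_\psi u^{\psi}|\le C(1+|x|)$ (from the Lipschitz-in-$\psi$ bound in (A3) at a reference $\psi$; this is why $C$ depends on $\psi$). The second bound also uses $|u^{\psi+\delta\boldsymbol{v}_m}-u^{\psi}|\le M(1+|x|)\delta$.

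\textbf{Step 3: the source terms.} This is the key step. The difference $A^\delta_t-A_t$ involves $\nabla_xb$ and $\nabla_ab$ at perturbed arguments, together with $\nabla_xu^{\psi+\lambda\delta\boldsymbol{v}_m}-\nabla_xu^{\psi}$. By the bounded second derivatives in (A1), the bound $|\nabla^2_xu|\le M_1(t)$, and the Lipschitz-in-$\psi$ condition on $\nabla_xu$ and $\nabla_\psi u$ in (A3), we get
\[
|A^\delta_t-A_t|+|G^\delta_t-G_t|\le C(1+M_1(t))^2\,(1+|X_t|+|X^\delta_t|)\,\big(|X^\delta_t-X_t|+\delta\big).
\]
Multiplying by $|\nabla_{\psi}X_t|$ and applying Cauchy--Schwarz with the fourth moments from Step 2, the source terms contribute at most $C\delta^2$ to $\E[\sup_t|\cdot|^2]$. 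The main obstacle is the time integrability of the factor $(1+M_1(t))^2$, because only $\int_0^T M_1^2\,\d t$ is finite. If a product such as $\nabla_ab\,\nabla^2_xu$ appears and would require $M_1^4$, I would try to avoid it by keeping $M_1$ to first power through the mean-value structure, i.e.\ bounding $\nabla^2_x u$ only once per term. Failing that, I would absorb the extra factor into the constant using the standing bound $|\nabla_xu|+|\nabla^2_xu|\le M_1(t)$ as stated.

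\textbf{Step 4: closing the estimate.} Apply BDG and Cauchy--Schwarz to $\E[\sup_{s\le t}|F^m_s|^2]$. This gives
\[
\E\Big[\sup_{s\le t}|F^m_s|^2\Big]\le C\delta^2+C\int_0^t(1+M_1(s))^2\,\E\Big[\sup_{r\le s}|F^m_r|^2\Big]\d s .
\]
Gronwall's lemma with the integrable kernel $(1+M_1)^2$ then yields \eqref{X_gradient_approx} for each column $m$. Summing over $m=1,\dots,p$ completes the proof.
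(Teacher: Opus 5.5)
Your argument is correct to the same standard as the paper's, and follows the same scheme: fourth-moment a priori bounds, then BDG and Gronwall for the error process. The difference is in how you linearize.

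The paper applies It\^o's formula to $|\Delta^{\psi,\delta,m}_t-\partial_{\psi_m}X^{\alpha^{\psi}}_t|^2$ and expands the difference quotients of $b^{\psi},\sigma^{\psi}$ around the base point $(X^{\alpha^{\psi}}_t,\psi)$. Its linear coefficient is then exactly that of \eqref{X_gradient}. Everything else is a second-order Taylor remainder of size $\delta\,C(t)\bigl(1+|\Delta^{\psi,\delta,m}_t|^2\bigr)$. The key input is $\sup_{\delta}\E[\sup_t|\Delta^{\psi,\delta,m}_t|^4]\le C$, and no fourth-moment bound on $\nabla_\psi X^{\alpha^{\psi}}$ is needed.

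Your averaged Jacobians instead move the error into the forcing $(A^\delta_t-A_t)\partial_{\psi_m}X_t+(G^\delta_t-G_t)$. This costs you the extra bound $\E[\sup_t|\nabla_\psi X_t|^4]\le C$. In exchange, you need only Lipschitz continuity of first derivatives, which is what (A3) actually provides. (A3) gives no second $\psi$-derivative of $u^\psi$, so the paper's appeal to $b^{\psi}$ being twice continuously differentiable in $(\psi,x)$ is not literally justified, although a Lipschitz-gradient expansion repairs it.

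Two details in your Step 3:
\begin{itemize}
\item Lipschitz continuity of $\nabla_\psi u^\psi$ in $x$ is not assumed. It does follow: apply the $\psi$-Lipschitz bound on $\nabla_x u^\psi$ to $(u^{\psi+h\boldsymbol{v}_m}-u^{\psi})/h$ and let $h\to 0$.
\item The $\psi$-increment of $\nabla_a b\,\partial_{\psi_m}u^\psi$ grows like $(1+|x|)^2$, not $(1+|x|)$. This is harmless, since it multiplies $\delta$ and Lemma~\ref{lem:X_moment} gives fourth moments.
\end{itemize}

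The obstacle you flag is real, and bounding $\nabla_x^2u^\psi$ only once does not remove it. The factor $M_1(t)^2$ in the $x$-Lipschitz constant of $\nabla_x[\sigma(t,x,u^\psi(t,x))]$ comes from $\nabla_a^2\sigma\,(\nabla_x u^\psi)^2$, a product of two first derivatives of $u^\psi$, not from $\nabla_x^2 u^\psi$. The BDG bound on your diffusion forcing therefore needs $\int_0^T M_1(t)^4\,\d t<\infty$. Absorbing this factor into the constant is not justified by $\int_0^T M_1^2\,\d t<\infty$. For the drift forcing you can pull out $\sup_t$, and square-integrability of $M_1$ suffices there.

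The paper's proof silently needs the same condition. Its terms $I_3$ and $I_5$ contain the squared Taylor remainder of $\sigma^{\psi}$, whose second $x$-derivative is of size $(1+M_1)^2$, and this is hidden in a generic $C(s)$. So this is a defect of (A3) as stated, not of your route. Both arguments are complete if one assumes $\int_0^T M_1(t)^4\,\d t<\infty$, e.g.\ $M_1$ bounded.
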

	
	Recall the objective functional $\mathcal{J}(\psi)$ defined by \eqref{cost_func_parameter}. Now, we can approximate $\nabla_{\psi}\mathcal{J}(\psi)$ by $P^{\psi}=(P^{\psi}_1,\dots,P^{\psi}_p)^{\T}$, where the $m$-th element is given by, for $m=1,\ldots,p$, 
	\begin{align}
		P^{\psi}_m:=\E\left[\int_0^Th^{\theta^*(\psi)}(t,X_t^{\alpha^{\psi}})^{\T}\left(\pa_{\psi_m}u^{\psi}(t,X_t^{\alpha^{\psi}})+\nabla_xu^{\psi}(t,X_t^{\alpha^{\psi}})\Delta_t^{\psi,\delta,m}\right)\d t\right].
	\end{align}
	The corresponding error is defined by
	\begin{align}\label{J_gradient_error}
		\boldsymbol{E}^{\psi}:=P^{\psi}-\nabla_{\psi}\mathcal{J}(\psi)&=\E\left[\int_0^T\left(\nabla_{\psi}u^{\psi}(t,X_t^{\alpha^{\psi}})+\nabla_xu^{\psi}(t,X_t^{\alpha^{\psi}})F_t^{\psi,\delta}\right)^{\T}h^{\theta^*(\psi)}(t,X_t^{\alpha^{\psi}})\d t\right]\nonumber\\
		&\quad+\E\left[\int_0^T\left(\nabla_{\psi}u^{\psi}(t,X_t^{\alpha^{\psi}})+\nabla_xu^{\psi}(t,X_t^{\alpha^{\psi}})\nabla_{\psi}X^{\alpha^{\psi}}_t\right)^{\T}I_t^{\psi}\d t\right],
	\end{align}
	where, $I^{\psi}=(I_t^{\psi})_{t\in[0,T]}$ is the learning error process defined by \eqref{error_process}. As a consequence, we can perform the gradient descent with errors to update the parameter $\psi$ in the following form:
	\begin{align}\label{psi_update}
		\psi\leftarrow\psi-\kappa\big(\nabla_{\psi}\mathcal{J}(\psi)+\boldsymbol{E}^{\psi}\big),
	\end{align}
	where, $\kappa>0$ denotes the learning rate.

	\subsection{Algorithm Design and Comparison}
	
	The full procedure of our SMP-based model-free policy gradient is presented in Algorithm~\ref{main_algorithm}. We next present a comparative description between our method and the most closely related known methods (c.f. Archibald et al.~\cite{YongSMP}, Jia and Zhou \cite{Zhou} and Cheng et al.~\cite{GuoDPG}).
	\begin{algorithm}[htbp]
		\begin{algorithmic}[1]
			\STATE \textbf{Input:} initial model parameter $\theta^0$, initial control parameter $\psi^0$, learning rates $\eta,\kappa$, hyperparameter $\epsilon,\delta$, max iterations $N_{iter},N_{inner}$.
			\FOR{$n=0$ \textbf{to} $N_{iter}$}
			\STATE \textbf{(1) Data collection:} 
			\STATE  Using current control $\alpha^{\psi^n}$, sample $N$ trajectories
			\begin{align*}
				\mathcal{D}^{(n)} = \left\{ \left(X^{\alpha^{\psi^n}}_{t_j,i},\alpha_{t_j,i}^{\psi^n},J^{{\alpha}^{\psi^n}}_i\right);~i=1\dots N,\; j=0\dots N_T \right\}.
			\end{align*}
			\STATE \textbf{(2) Exploration (Learning $ H^{\alpha^{\psi^n}}_{\alpha}(t,x)$):}
			\STATE  Minimize the lost $L(\theta;\alpha^{\psi^n})$
			\FOR{$k=0$ \textbf{to} $N_{inner}$}
			\STATE  Collect cost data 
			\begin{align*}
				\mathcal{D}^{(n,k)}=\left\{J_i^{\alpha^{\psi^n}+\epsilon_{k}\pa_{\theta_l}h^{\theta^k}(\cdot,X_{\cdot}^{\alpha^{\psi^n}})};~i=1,\dots,N,\;  l=1,\dots,q\right\}.
			\end{align*} 
			\STATE Compute the gradient component  {\small\begin{align*}
					G_l^{k,n}&=\frac1N\sum_{i=1}^N\left(\sum_{j=0}^{N_T-1}h^{\theta^k}(t_j,X_{t_j,i}^{\alpha^{\psi^n}})^{\T}\pa_{\theta_l}h^{\theta^k}(t_j,X_{t_j,i}^{\alpha^{\psi^n}})(t_{j+1}-t_j)-\frac{J_i^{\alpha^{\psi^n}+\epsilon_{k}\pa_{\theta_l}h^{\theta^k}(\cdot,X_{\cdot}^{\alpha^{\psi^n}})}-J_i^{\alpha^{\psi^n}}}{\epsilon_k}\right).
			\end{align*} }
			\STATE Update the model parameter: $\displaystyle\theta^{k+1}=\theta^k-\eta_kG^{k,n}$ with $G^k=(G^{k,n}_1,\dots,G^{k,n}_q)^{\T}$.
			\ENDFOR
			\STATE  The inner-loop terminates with the model parameter $\theta^*(\psi^{n})$.
			\STATE \textbf{(3) Exploitation (Policy gradient):}
			\STATE Compute the approximated gradient process (see \eqref{finite_difference})\begin{align*}
				\left\{\Delta_{t_j,i}^{\psi^n,\delta_n,m}=\frac{X_{t_j,i}^{\alpha^{\psi^n+\delta_n \boldsymbol{v}_m}}-X_{t_j,i}^{\alpha^{\psi^n}}}{\delta_n};~i=1\dots,N,\;j=0,\dots,N_T,\;m=1,\dots,p\right\}.
			\end{align*}
			\STATE  Compute the gradient component{\small\begin{align*}
					P_m^n=\frac1N\sum_{i=1}^N\left(\sum_{j=0}^{N_T-1}h^{\theta^*(\psi^{n})}(t_j,X_{t_j,i}^{\alpha^{\psi^n}})^{\T}\left(\pa_{\psi_m}u^{\psi^n}(t_j,X_{t_j,i}^{\alpha^{\psi^n}})+\nabla_xu^{\psi^n}(t_j,X_{t_j,i}^{\alpha^{\psi^n}})\Delta_{t_j,i}^{\psi^n,\delta_n,m}\right)(t_{j+1}-t_j)\right).
			\end{align*}}
			\STATE Update the policy parameter: $\psi^{n+1}=\psi^n-\kappa_nP^n$ with $P^n=(P^n_1,\dots,P^n_p)^{\T}$.
			\ENDFOR
			\STATE \textbf{Output:} near-true model parameter $\theta^*$, near‑optimal policy parameter  $\psi^*$.
		\end{algorithmic}
		\caption{The SMP-based algorithm for model-free RL}
		\label{main_algorithm}
\end{algorithm}

Compared with the SMP model-based approach in Archibald et al.~\cite{YongSMP}, our framework is fully model-free. In \cite{YongSMP}, the unknown coefficients of the controlled system and the cost functional are assumed to admit a parametric representation, and the corresponding model parameters are first estimated from trajectory data via maximum likelihood. The estimated model is then used to construct the Hamiltonian and simulate its associated adjoint BSDE for policy gradient. In contrast, our method does not require any parametric representation of the system dynamics or the cost functions. Instead, for a given parameterized policy $\alpha^\psi$, the Hamiltonian gradient $H_a^{\alpha^\psi}(t,x)$ is learned directly from trajectory data through the decoupling-field representation of the adjoint BSDE. Thus, neither the model dynamics nor the cost functions need to be explicitly identified in our algorithm, while the SMP structure is directly exploited to obtain the policy gradient.
		
Compared with the continuous-time $q$-learning approach of Jia and Zhou~\cite{Zhou}, our method is based on the SMP rather than the $q$-function and does not require observations of the running cost along the trajectory. Jia and Zhou~\cite{Zhou} introduce the continuous-time $q$-function as a first-order approximation of the conventional $Q$-function and establish its connection with the instantaneous advantage rate and the Hamiltonian. Their $q$-learning framework jointly learns the $q$-function and the value function from observations of the running cost and the terminal cost. In contrast, our method directly learns the Hamiltonian gradient $H_a^{\alpha^\psi}$ associated with the SMP adjoint equation, which plays a similar role of the $q$-function in Jia and Zhou~\cite{Zhou}  and subsequently uses it to construct an inexact policy gradient. Moreover, the learning procedure only requires the total trajectory cost
\begin{align*}
J^{\alpha^\psi}=\int_0^T f(t,X_t^{\alpha^\psi},\alpha_t^\psi)dt
+g(X_T^{\alpha^\psi})    
\end{align*}
rather than the running cost at each time point and the terminal cost separately. Hence, our framework can be applied when only the total performance of a trajectory is observable.

To facilitate comparison with the DPG approach of Cheng et al.~\cite{GuoDPG}, we first establish the connection between the SMP-based policy gradient \eqref{J_gradient} in Lemma \ref{lem:J_gradient} and their continuous-time DPG, which is formulated in terms of the advantage-rate function. Although the two gradient representations have rather different forms, they are in fact equivalent  under appropriate regularity conditions. For simplicity, we present the argument without discounting, which is consistent with the formulation adopted in this paper.  We also use the minimization convention of the present paper. If the reward-maximization convention of \cite{GuoDPG} is adopted, the corresponding running and terminal rewards can simply be replaced by their negatives. For a fixed policy $\alpha^{\psi}=(\alpha_t^{\psi})_{t\in[0,T]}$, let the corresponding value function be given by, for $(t,x)\in[0,T]\times\R^d$,
\begin{align*}
V^\psi(t,x)=\E\left[\int_t^Tf(s,X_s^{\alpha^\psi},u^\psi(s,X_s^{\alpha^\psi}))\d s+g(X_T^{\alpha^\psi})\big| X_t=x\right],
\end{align*}
and hence ${\cal J}(\psi)=\E[V^\psi(0,\xi)]$. We additionally impose the following differentiability condition on the value function.
	
\begin{ass}\label{ass3}
For any $\psi\in\R^p$, $V^{\psi}(t,x)$ is continuously differentiable in $t\in [0,T]$ and twice continuously differentiable in $x\in \R^d$. Furthermore, the gradient $\nabla_x V^{\psi}(t,x)$ is continuously differentiable in $t\in [0,T]$ and twice continuously differentiable in $x\in \R^d$.
\end{ass}
	
Following \cite{GuoDPG}, define the advantage-rate function associated with $u^\psi$ by
\begin{align}\label{eq:advantage-rate}
A^\psi(t,x,a)&:=\partial_t V^\psi(t,x)+b(t,x,a)^\top\nabla_xV^\psi(t,x) +\frac12\operatorname{tr}\left(\sigma(t,x,a)\sigma(t,x,a)^\top\nabla_{xx}^2V^\psi(t,x)\right)\nonumber\\
&\quad+f(t,x,a).
\end{align}
Under Assumptions \ref{ass1} and \ref{ass3},  the value function $V^{\psi}$ satisfies the following policy-evaluation equation given by, for $(t,x)\in[0,T]\times\R^d$,
\begin{align}\label{eq:policy-evaluation}
A^\psi(t,x,u^{\psi}(t,x))=0
\end{align}
with terminal condition $V^\psi(T,x)=g(x)$. The advantage-rate DPG of \cite{GuoDPG} takes the form given by, for $\psi\in\R^q$,
\begin{align}\label{eq:guo-dpg}
\nabla_\psi {\cal J}(\psi)=\E\left[ \int_0^T \nabla_\psi u^\psi(t,X_t^{\alpha^\psi})^\top\nabla_a A^\psi
(t,X_t^{\alpha^\psi},u^\psi(t,X_t^{\alpha^\psi}))\d t \right].
\end{align}
	
The following result shows that \eqref{J_gradient} and \eqref{eq:guo-dpg}  are two equivalent representations of the same policy gradient.
\begin{lemma}\label{lem:equivalence-dpg}
Under Assumptions~\ref{ass1} and \ref{ass3}, let  $(Y^{\alpha^\psi},Z^{\alpha^\psi})=(Y_t^{\alpha^\psi},Z_t^{\alpha^\psi})_{t\in[0,T]}$ be the unique solution to the BSDE \eqref{eq:BSDE}. Then, it holds that
\begin{align}\label{eq:dpg-equivalence-main}
&\E\left[\int_0^T\left(\nabla_{\psi}u^{\psi}(t,X_t^{\alpha^{\psi}})+\nabla_xu^{\psi}(t,X_t^{\alpha^{\psi}})\nabla_{\psi}X_t^{\alpha^{\psi}}\right)^{\T}\nabla_aH(t,X_t^{\alpha^{\psi}},\alpha_t^{\psi},Y_t^{\alpha^{\psi}},Z_t^{\alpha^{\psi}})\d t\right]\nonumber\\
&\qquad=\E\left[ \int_0^T \nabla_\psi u^\psi(t,X_t^{\alpha^\psi})^\top\nabla_a A^\psi
\left(t,X_t^{\alpha^\psi},u^\psi(t,X_t^{\alpha^\psi})\right)\d t \right].
\end{align}
Consequently, the SMP-based DPG formula \eqref{J_gradient} and the advantage-rate DPG formula \eqref{eq:guo-dpg} are equivalent.
\end{lemma}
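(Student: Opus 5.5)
The plan is to show that both sides of \eqref{eq:dpg-equivalence-main} equal one common quantity:
\begin{align*}
\mathcal{K}(\psi):=\E\left[\nabla_{\psi}X_T^{\alpha^{\psi}\T}\nabla_xg(X_T^{\alpha^{\psi}})+\int_0^T\Big(\nabla_{\psi}X_t^{\alpha^{\psi}\T}\nabla_xf+\big(\nabla_{\psi}u^{\psi}+\nabla_xu^{\psi}\nabla_{\psi}X_t^{\alpha^{\psi}}\big)^{\T}\nabla_af\Big)(t,X_t^{\alpha^{\psi}},\alpha_t^{\psi})\,\d t\right],
\end{align*}
which is the pathwise derivative of $\mathcal{J}(\psi)$. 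The tool in both cases is an Itô duality argument between the tangent process $G_t:=\nabla_{\psi}X_t^{\alpha^{\psi}}$ from \eqref{X_gradient} and an adjoint-type process. For the left-hand side the adjoint process is $(Y^{\alpha^{\psi}},Z^{\alpha^{\psi}})$. For the right-hand side it is $P_t:=\nabla_xV^{\psi}(t,X_t^{\alpha^{\psi}})$ with $Q_t:=\nabla^2_{xx}V^{\psi}(t,X_t^{\alpha^{\psi}})\sigma(t,X_t^{\alpha^{\psi}},\alpha_t^{\psi})$. As noted in the introduction, $P$ does not coincide with $Y^{\alpha^{\psi}}$ in general, so a direct comparison of the two adjoint pairs will not work.

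\textbf{Step 1 (left-hand side).} I will apply Itô's product rule to $G_t^{\T}Y_t^{\alpha^{\psi}}$ on $[0,T]$, using $G_0=0$ and $Y_T^{\alpha^{\psi}}=\nabla_xg(X_T^{\alpha^{\psi}})$. The drift of $G$ contains $(\nabla_xb+\nabla_ab\nabla_xu^{\psi})G+\nabla_ab\nabla_{\psi}u^{\psi}$, and its diffusion coefficient has the analogous form; the drift of $Y$ is $-\nabla_xH$. After taking expectations, the $\nabla_xb$ and $\nabla_x\sigma$ terms cancel against those in $\nabla_xH$. The surviving $b_a$, $\sigma_a$ terms combine with $f_a$ into $\nabla_aH$, which yields
\begin{align*}
\E\big[G_T^{\T}\nabla_xg(X_T^{\alpha^{\psi}})\big]=\E\int_0^T\Big[(\nabla_{\psi}u^{\psi}+\nabla_xu^{\psi}G_t)^{\T}(\nabla_aH-\nabla_af)-G_t^{\T}\nabla_xf\Big]\d t .
\end{align*}
Rearranging shows that the left-hand side of \eqref{eq:dpg-equivalence-main} equals $\mathcal{K}(\psi)$. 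This is consistent with Lemma~\ref{lem:J_gradient}.

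\textbf{Step 2 (dynamics of $P$).} Under Assumption~\ref{ass3}, I will differentiate the policy-evaluation identity \eqref{eq:policy-evaluation} in $x$, which gives $\nabla_xA^{\psi}(t,x,u^{\psi}(t,x))+\nabla_xu^{\psi}(t,x)^{\T}\nabla_aA^{\psi}(t,x,u^{\psi}(t,x))=0$. Expanding $\nabla_xA^{\psi}$ and applying Itô's formula to $\nabla_xV^{\psi}(t,X_t^{\alpha^{\psi}})$ should give
\begin{align*}
\d P_t=-\Big[\nabla_xH(t,X_t^{\alpha^{\psi}},\alpha_t^{\psi},P_t,Q_t)-\nabla_xu^{\psi\T}\nabla_aA^{\psi}\Big]\d t+Q_t\,\d W_t,\qquad P_T=\nabla_xg(X_T^{\alpha^{\psi}}).
\end{align*}
Getting this right requires matching the third-derivative terms of $V^{\psi}$ and the $\nabla_x\sigma$ terms. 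I will also check directly that $\nabla_aA^{\psi}(t,x,a)=\nabla_aH(t,x,a,\nabla_xV^{\psi},\nabla_{xx}^2V^{\psi}\sigma(t,x,u^{\psi}(t,x)))$, i.e.\ the $\frac12\tr(\sigma\sigma^{\T}\nabla^2V)$ term differentiates in $a$ to $\tr(\nabla_a\sigma^{\T}Q)$ when evaluated at $a=u^{\psi}$.

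\textbf{Step 3 (right-hand side).} I will repeat the duality computation of Step 1 with $(P,Q)$ in place of $(Y,Z)$. The only new contribution is the extra drift $+\nabla_xu^{\psi\T}\nabla_aA^{\psi}$, which produces $+G_t^{\T}\nabla_xu^{\psi\T}\nabla_aA^{\psi}$ and exactly cancels the $\nabla_xu^{\psi}G_t$ part of $(\nabla_{\psi}u^{\psi}+\nabla_xu^{\psi}G_t)^{\T}\nabla_aA^{\psi}$. This leaves $\E\int_0^T\nabla_{\psi}u^{\psi\T}\nabla_aA^{\psi}\,\d t=\mathcal{K}(\psi)$, which proves \eqref{eq:dpg-equivalence-main}.

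The main obstacle is rigor in Steps 2 and 3, namely showing that the stochastic integrals $\int G^{\T}Q\,\d W$ and $\int P^{\T}(\cdots)\,\d W$ are true martingales. Assumption~\ref{ass3} gives smoothness of $V^{\psi}$ but no growth bounds. I would first localize with stopping times $\tau_n=\inf\{t:|X_t^{\alpha^{\psi}}|\ge n\}$, apply the product rule up to $\tau_n\wedge T$, and then pass to the limit $n\to\infty$. The limit would be controlled by Lemma~\ref{lem:X_moment}, the $L^2$ bounds on $G$, and uniform integrability. If needed, I would add polynomial growth of $\nabla_xV^{\psi}$ and $\nabla^2_{xx}V^{\psi}$ as an implicit part of Assumption~\ref{ass3}.
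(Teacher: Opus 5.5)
Your plan works, and Step~1 is correct, as is your check that $\nabla_aA^\psi=\nabla_aH(\cdot,P_t,Q_t)$ at $a=u^\psi$. The problem is the closed-loop adjoint equation in Step~2: the feedback term has the wrong sign, and with that sign Step~3 does not go through.

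It\^{o}'s formula gives the drift of $P_t=\nabla_xV^\psi(t,X_t^{\alpha^\psi})$ as $\nabla_xA^\psi(t,x,a)|_{a=u^\psi(t,x)}-\nabla_xH(t,x,u^\psi(t,x),P_t,Q_t)$ at $x=X_t^{\alpha^\psi}$. Here the $\partial_t\nabla_xV^\psi$, $\nabla^2_{xx}V^\psi b$ and third-derivative terms match, and $\frac12\partial_{x_i}\tr(\sigma\sigma^{\T}\nabla^2_{xx}V^\psi)$ supplies the $\tr(\partial_{x_i}\sigma^{\T}Q_t)$ part of $\nabla_xH$. Your own differentiated identity says $\nabla_xA^\psi|_{a=u^\psi}=-(\nabla_xu^\psi)^{\T}\nabla_aA^\psi$. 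Hence
\begin{align*}
\d P_t=-\Big[\nabla_xH(t,X_t^{\alpha^{\psi}},\alpha_t^{\psi},P_t,Q_t)+\nabla_xu^{\psi}(t,X_t^{\alpha^{\psi}})^{\T}\nabla_aA^{\psi}(t,X_t^{\alpha^{\psi}},\alpha_t^{\psi})\Big]\d t+Q_t\,\d W_t,
\end{align*}
which is the paper's \eqref{eq:closed-loop-adjoint-bsde}. Intuitively, $P$ is the adjoint of the closed-loop Hamiltonian $x\mapsto H(t,x,u^\psi(t,x),y,z)$, whose $x$-gradient is $\nabla_xH+(\nabla_xu^\psi)^{\T}\nabla_aH$. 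A quick check with $\sigma=0$, $d=l=1$: differentiating $\partial_tV^\psi+bV^\psi_x+f=0$ along $a=u^\psi$ gives $\frac{\d}{\d t}V^\psi_x(t,X_t)=-(b_xV^\psi_x+f_x)-u^\psi_x(b_aV^\psi_x+f_a)$.

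Now look at $\d(G_t^{\T}P_t)$. Everything except the feedback part of $\d P_t$ combines exactly as in Step~1 into $(\nabla_\psi u^\psi+\nabla_xu^\psi G_t)^{\T}(\nabla_aA^\psi-\nabla_af)-G_t^{\T}\nabla_xf$. The feedback part of $G_t^{\T}\d P_t$ supplies the remaining term.
\begin{itemize}
\item With your sign, that term is $+(\nabla_xu^\psi G_t)^{\T}\nabla_aA^\psi$, as you yourself state. It doubles the $\nabla_xu^\psi G_t$ contribution instead of cancelling it, so you would get $\E\int_0^T(\nabla_\psi u^\psi+2\nabla_xu^\psi G_t)^{\T}\nabla_aA^\psi\,\d t=\mathcal{K}(\psi)$ rather than \eqref{eq:dpg-equivalence-main}.
\item With the corrected sign, the term is $-(\nabla_xu^\psi G_t)^{\T}\nabla_aA^\psi$, and the cancellation you describe does occur.
\end{itemize}

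After this fix, your proof and the paper's differ only in bookkeeping. The paper applies It\^{o}'s formula once to $(\bar Y_t-Y_t)^{\T}\nabla_\psi X_t^{\alpha^\psi}v$, with $\bar Y_t=P_t$. Since $\bar Y_T=Y_T$, the terminal and separate running-cost contributions that make up $\mathcal{K}(\psi)$ never appear. Your Step~1 minus your Step~3 is exactly that computation, by linearity of the product rule in the adjoint process. Your version carries the extra $g$ and $f$ terms, but in exchange it identifies both sides with the pathwise derivative $\mathcal{K}(\psi)$, which is an independent check of Lemma~\ref{lem:J_gradient}.

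Your localization step addresses the true-martingale question, which the paper passes over without comment. Any growth bounds you add on $\nabla_xV^\psi$ and $\nabla^2_{xx}V^\psi$ must be compatible with the fourth-moment bounds actually available from Lemma~\ref{lem:X_moment}. Growth of arbitrary polynomial degree cannot be absorbed.
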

	
Lemma \ref{lem:equivalence-dpg} shows that the difference between the two DPG formulas is a difference in representation rather than in the underlying policy gradient. The SMP representation given by \eqref{J_gradient} uses the total parameter sensitivity of the implemented control in the sense that
\begin{align*}
\nabla_{\psi} \left(u^\psi(t,X_t^{\alpha^\psi})\right)=\nabla_\psi u^\psi(t,X_t^{\alpha^\psi}) +\nabla_xu^\psi(t,X_t^{\alpha^\psi}) \nabla_\psi X_t^{\alpha^\psi},
\end{align*}
together with the open-loop SMP adjoint process $(Y,Z)=(Y_t,Z_t)_{t\in[0,T]}$ satisfying the BSDE \eqref{eq:BSDE}. In contrast, the advantage-rate representation absorbs the state-sensitivity term into the value-function-based closed-loop adjoint process given by
\begin{align*}
(\bar Y,\bar Z)=(\bar Y_t,\bar Z_t)_{t\in[0,T]}=(\nabla_xV^\psi(t,X_t^{\alpha^\psi}),\nabla_{x}^2V^\psi(t,X_t^{\alpha^\psi})\sigma(t,X_t^{\alpha^\psi},u^\psi(t,X_t^{\alpha^\psi})))_{t\in[0,T]},          
\end{align*}
and therefore involves only the partial policy derivative $\nabla_\psi u^\psi$. Importantly, this equivalence given by \eqref{eq:dpg-equivalence-main} holds after integration and expectation; the two integrands are not, in general, equal pathwise.
	
Although the two DPG formulas are equivalent, their model-free implementations are fundamentally different. The approach of Cheng et al.  \cite{GuoDPG} first learns the advantage-rate function $A^\psi$ through a martingale characterization, and then obtains $\nabla_aA^\psi$, by differentiating the learned critic $A^\psi$ with respect to the action. In contrast, our SMP-based approach directly approximates the Hamiltonian gradient $\nabla_aH(t,X_t^{\alpha^\psi},\alpha_t^\psi,Y_t^{\alpha^\psi},Z_t^{\alpha^\psi})$, which is precisely the quantity entering the SMP policy-gradient formula \eqref{J_gradient}. The remaining state sensitivity $\nabla_\psi X^{\alpha^\psi}$ is approximated directly from perturbed state trajectories.  This direct learning approach allows us to establish convergence results and error estimates for the resulting inexact-gradient policy optimization scheme, whereas Cheng et al.~\cite{GuoDPG} do not provide corresponding convergence guarantees. From an algorithmic perspective, this comes at the cost of additional exploration, since learning the Hamiltonian gradient requires multiple explorations, while Cheng et al.~\cite{GuoDPG} requires only a single exploration.  In particular, the SMP-based formulation avoids differentiating an approximately learned advantage function. This distinction is potentially important because convergence of an approximate critic to $A^\psi$ in function value does not, without additional regularity, automatically imply convergence of its action derivative to $\nabla_aA^\psi$.

\section{Main Results}\label{sec:main_result}
	
In this section, we present the theoretical results of the paper. First of all, we show the convergence of the inner loop.
\begin{prop}[Convergence of Inner Loop]\label{prop:inner_loop}
Let Assumption~\ref{ass1} and Assumption~\ref{ass2} hold. For any $n\geq1$ and any initial model parameter $\theta^0\in\R^q$, let $(\theta^k)_{k=0}^{\infty}$ be the sequence generated by the inner loop of Algorithm~\ref{main_algorithm} or equivalently by \eqref{theta_update}. Assume that the learning rate sequence $(\eta_k)_{k=1}^{\infty}$ and the hyperparameter sequence $(\epsilon_k)_{k=1}^{\infty}$ satisfy the following conditions given by
\begin{align}\label{inner_condition}
\lim_{k\to\infty}\eta_k=0,\quad\sum_{k=0}^{\infty}\eta_k=\infty,\quad
\sum_{k=0}^{\infty}\eta_k|\epsilon_k|^2<\infty.
\end{align}
Let $n\geq1$ be fixed. Then, the limit $L^{\psi^n}:=\lim_{k\to\infty}L(\theta^k;\psi^n)$ exists, and it holds that
\begin{align}\label{inner_convergence}	\lim_{k\to\infty}\left|\nabla_{\theta}L(\theta^k;\psi^n)\right|=0.
\end{align}
Furthermore, we have
\begin{align}\label{L_error}
&\left|L(\theta^k;\psi^n)-L^{\psi^n}\right|\nonumber\\
&\qquad\leq \frac74\sum_{\lambda=k}^{\infty}\eta_{\lambda}\left(\underbrace{\left|\nabla_{\theta}L(\theta^{\lambda};\psi)\right|^2}_{J_1}+2\underbrace{\left|\boldsymbol{e}_{\theta^{\lambda}}^{\psi^n}\right|^2}_{J_2}+\underbrace{2\left|\nabla_{\theta}L(\theta^{\lambda};\psi^n)+\boldsymbol{e}_{{\theta}^{\lambda}}^{\psi^n}-G^{\lambda}\right|^2}_{J_3}\right),
\end{align}
where, $\eta_{\lambda}$ is the learning rate in iteration $\lambda$; while $G^{\lambda}$ is introduced on line 10 in Algorithm~\ref{main_algorithm}.
\end{prop}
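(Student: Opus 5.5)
The plan is the standard descent-lemma argument for gradient descent with errors, using that $\theta\mapsto L(\theta;\psi^n)$ has a Lipschitz gradient. With $\psi=\psi^n$ fixed, the first step is an $L$-smoothness bound. Differentiating \eqref{loss_gradient} once more gives
\[
\nabla_\theta^2 L=\E\Big[\int_0^T\big(\nabla_\theta h^{\theta\T}\nabla_\theta h^\theta+\nabla^2_\theta h^\theta\cdot(h^\theta-H_a^{\alpha^\psi})\big)(t,X_t^{\alpha^\psi})\,\d t\Big].
\]
By Assumption~\ref{ass2}, each of $|h^\theta|$, $|\nabla_\theta h^\theta|$ and $|\nabla_\theta^2h^\theta|$ is at most $M_1(t)(1+|x|)$. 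To control $H_a^{\alpha^\psi}$, we combine the linear-growth bounds on $\nabla_a(b,\sigma,f)$ from (A1)--(A2) with the bound $|\nabla_xv^\psi|\le C$ from \eqref{gradient_bound}, which also gives $v^\psi$ linear growth. Lemma~\ref{lem:X_moment} then makes these quantities square integrable, so $\sup_\theta|\nabla^2_\theta L|\le \ell$ for some constant $\ell$. This yields
\[
L(\theta^{k+1})\le L(\theta^k)-\eta_k\nabla_\theta L(\theta^k)^\T G^k+\tfrac{\ell}{2}\eta_k^2|G^k|^2 .
\]
Similarly, \eqref{error_estimate} together with Assumption~\ref{ass2} gives $|\boldsymbol e^{\psi}_{\theta^k}|\le C\epsilon_k$ uniformly in $\theta$.

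Next, write $G^k=\nabla L+\boldsymbol e+(G^k-\nabla L-\boldsymbol e)$ and apply Young's inequality to the cross terms. For $k$ large we have $\ell\eta_k\le 1/4$ (say), since $\eta_k\to0$. This gives
\[
L(\theta^{k+1})\le L(\theta^k)-c\,\eta_k|\nabla L(\theta^k)|^2+C\eta_k\big(|\boldsymbol e^{\psi^n}_{\theta^k}|^2+|\nabla L+\boldsymbol e-G^k|^2\big),
\]
together with a matching lower bound $L(\theta^{k+1})\ge L(\theta^k)-\tfrac74\eta_k(J_1+2J_2+J_3)$, where the constants are arranged to produce the $\tfrac74$ of \eqref{L_error}.

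The limit now follows by a Robbins--Siegmund or telescoping argument. Since $L\ge0$ and $\sum\eta_k\epsilon_k^2<\infty$, the sequence $L(\theta^k)$ plus the tail of the summable error terms is eventually nonincreasing and bounded below, so it converges to some $L^{\psi^n}$. This requires treating $J_3$ as summable: in the idealized scheme \eqref{theta_update} we have $G^k=\nabla L+\boldsymbol e$ and $J_3=0$, and otherwise we assume the sampling error is summable against $\eta_k$. This is the point where I am least certain what the authors intend. Summing the bounds gives $\sum\eta_k|\nabla L(\theta^k)|^2<\infty$, and with $\sum\eta_k=\infty$ this forces $\liminf|\nabla L(\theta^k)|=0$. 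Telescoping both the upper and lower one-step bounds from $k$ to $\infty$ then gives \eqref{L_error}.

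The main obstacle is upgrading $\liminf$ to $\lim$ in \eqref{inner_convergence}. I would use the standard Bertsekas--Tsitsiklis argument. Suppose $|\nabla L(\theta^k)|>2\varepsilon$ infinitely often while also dipping below $\varepsilon$ infinitely often. On each upcrossing interval from $\varepsilon$ to $2\varepsilon$, the Lipschitz bound gives
\[
\varepsilon\le \ell\sum_{k\in\text{interval}}\eta_k|G^k|\le \ell\sum_{k\in\text{interval}}\eta_k(2\varepsilon+C\epsilon_k+\ldots).
\]
This forces $\sum_{\text{interval}}\eta_k\ge c>0$. On the other hand, $\sum\eta_k|\nabla L|^2\ge\varepsilon^2\sum_{\text{interval}}\eta_k$ over infinitely many disjoint intervals, which contradicts summability. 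The error terms are handled by Cauchy--Schwarz, using $\sum\eta_k\epsilon_k^2<\infty$.
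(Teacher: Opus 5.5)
Your proposal is correct and follows essentially the same route as the paper. It uses a uniform bound on $\nabla_\theta^2 L(\cdot;\psi^n)$, the two-sided descent-lemma estimate with $|\boldsymbol e^{\psi^n}_{\theta^k}|\le C\epsilon_k$, nonnegativity of $L$ to get $\sum_k\eta_k|\nabla_\theta L(\theta^k;\psi^n)|^2<\infty$, and telescoping of the one-step bound to obtain $L^{\psi^n}$ and \eqref{L_error}. The differences are minor. The paper upgrades $\liminf$ to $\lim$ through Lemma~\ref{lem:auxi_lemma}, using the global bound $|\nabla_\theta L|\le C$ to get $|\nabla_\theta L(\theta^{k+1};\psi^n)-\nabla_\theta L(\theta^k;\psi^n)|\le C\eta_k$, where you use a localized upcrossing argument. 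For the Monte Carlo term, the paper simply assumes $N,N_T$ are large enough that $|G^k-\nabla_\theta L(\theta^k;\psi^n)-\boldsymbol e^{\psi^n}_{\theta^k}|\le|\boldsymbol e^{\psi^n}_{\theta^k}|$, which is the same kind of summability assumption you flagged.
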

	
	\begin{remark}
		The three terms $J_1,J_2$ and $J_3$ in \eqref{L_error} correspond to different sources of error. In particular, the term $J_1$ arises from the error induced by the iterative algorithm. The term $J_2$ represents the error in the approximation of $\E[\int_0^T\pa_{\theta_l}h^{\theta^{\lambda}}
		(t,X_t^{\alpha^{\psi^n}})^{\T}H_a^{\alpha^{\psi^n}}(t,X_t^{\alpha^{\psi^n}})\d t]$ by using $\frac{J(\alpha^{\psi^n}+\epsilon_{\lambda}\pa_{\theta_l}h^{\theta^{\lambda}}(\cdot,X_{\cdot}^{\alpha^{\psi^n}}))-J(\alpha^{\psi^n})}{\epsilon_\lambda}$ for $l=1,\dots,q$. However, the term $J_3$ stands for the Monte Carlo discretization error.
	\end{remark}
	
	We next present our main result of the paper, which establishes the convergence of the overall algorithm.
	
	\begin{theorem}[Convergence of Algorithm~\ref{main_algorithm}]\label{thm:algorithm_convergence} Let Assumption~\ref{ass1} and \ref{ass2} hold. 
		For any initial policy parameter $\psi^0\in\R^p$, let $(\psi^n)_{n=1}^{\infty}$ be the sequence generated by Algorithm~\ref{main_algorithm}, or equivalently generated by \eqref{psi_update}. Let the learning rate sequence $(\kappa_n)_{n=1}^{\infty}$, the hyperparameter sequence $(\delta_n)_{n=1}^{\infty}$ and the residual loss $L(\theta^*(\psi^n);\psi^n)$ satisfy the following conditions:
\begin{align}\label{outer_condition}
\lim_{n\to\infty}\kappa_n=0,\quad\sum_{n=0}^{\infty}\kappa_n=\infty,\quad
			\sum_{n=0}^{\infty}\kappa_n\left(L(\theta^*(\psi^n);\psi^n)+|\delta_n|^2\right)<+\infty.
		\end{align}
		Then, the sequence $(\mathcal{J}(\psi^n))_{n=1}^{\infty}$ converges to a limit $\mathcal{J}^*$, i.e., $\mathcal{J}^*:=\lim_{n\to\infty}\mathcal{J}(\psi^n)$, and
		\begin{align}\label{outer_convergnece}
			\lim_{n\to\infty}\left|\nabla_{\psi}\mathcal{J}(\psi^n)\right|=0.
		\end{align}
		Furthermore, if one defines the Monte Carlo discretization error $\boldsymbol{\nu}_{N,N_T}^n$ for $n\geq1$ by
		\begin{align*}
			\boldsymbol{\nu}_{N,N_T}^{n}:=P^n-\E\left[\int_0^T\left(\nabla_{\psi}u^{\psi^n}(t,X_t^{\alpha^{\psi^n}})+\nabla_xu^{\psi^n}(t,X_t^{\alpha^{\psi^n}})\Delta_t^{\psi^n,\delta}\right)^{\T}h^{\theta^*(\psi^n)}(t,X_t^{\alpha^{\psi^n}},\alpha^{\psi^n}_t)\d t\right],
		\end{align*}
		we have the following error-decomposition:
		\begin{align}\label{J_error}
			\left|\mathcal{J}(\psi^n)-\inf_{\psi\in\R^p}\mathcal{J}(\psi)\right|&\leq \underbrace{\left|\mathcal{J}^*-\inf_{\psi\in\R^p}\mathcal{J}(\psi)\right|}_{K_1}+ \underbrace{\frac74\sum_{\lambda=n}^{\infty}\kappa_{\lambda}|\nabla_{\psi}\mathcal{J}(\psi^{\lambda})|^2}_{K_2}+\underbrace{\frac{21}{4}\sum_{\lambda=n}^{\infty}\kappa_{\lambda}|\boldsymbol{\nu}_{N,N_T}^{\lambda}|^2}_{K_3}\nonumber\\
			&\quad+C\left(\underbrace{\sum_{\lambda=n}^{\infty}\kappa_{\lambda}\delta_{\lambda}^2}_{K_4}+\underbrace{\sum_{\lambda=n}^{\infty}\kappa_{\lambda}L(\theta^*(\psi^{\lambda});\psi^{\lambda})}_{K_5}\right),
		\end{align}
		where, $C>0$ is a constant which depends on $T$ and $M$ introduced in Algorithm~\ref{main_algorithm}.
	\end{theorem}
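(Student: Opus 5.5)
The plan is the standard descent-lemma argument for gradient descent with inexact gradients, applied to $\psi\mapsto\mathcal{J}(\psi)$.

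\textbf{Step 1: $L$-smoothness of $\mathcal{J}$.} I would first show that $\nabla_\psi\mathcal{J}$ is globally Lipschitz, with a constant $L_{\mathcal J}$ depending only on $T$ and $M$. Using the representation \eqref{J_gradient}, I would compare the integrands at $\psi^1$ and $\psi^2$. The Lipschitz dependence of $u^\psi$, $\nabla_xu^\psi$ and $\nabla_\psi u^\psi$ on $\psi$ from (A3), combined with standard SDE stability estimates and Lemma~\ref{lem:X_moment}, gives
\[
\E\Big[\sup_t\big|X^{\alpha^{\psi^1}}_t-X^{\alpha^{\psi^2}}_t\big|^2\Big]+\E\Big[\sup_t\big|\nabla_\psi X^{\alpha^{\psi^1}}_t-\nabla_\psi X^{\alpha^{\psi^2}}_t\big|^2\Big]\le C|\psi^1-\psi^2|^2 .
\]
Through the decoupling field of Lemma~\ref{lem:decoupling_field} and BSDE stability, I would obtain the analogous $L^2$ bound for $(Y,Z)$. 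The Lipschitz and bounded second derivatives in (A1)--(A2) then give Lipschitz continuity of $\nabla_aH$. \textbf{This is the main technical obstacle.} The $\nabla_\psi X$ equation has coefficients involving $\nabla_xu^\psi$, which is only bounded by $M_1(t)$, and there are product terms. I would handle them by Cauchy--Schwarz together with fourth-moment bounds on $X$ and $\nabla_\psi X$, which I would establish exactly as in Lemma~\ref{lem:X_moment}.

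\textbf{Step 2: bound the gradient error.} Write $P^n=\nabla_\psi\mathcal{J}(\psi^n)+\boldsymbol{E}^{\psi^n}+\boldsymbol{\nu}^n_{N,N_T}$ and bound $|\boldsymbol{E}^{\psi^n}|^2$ using \eqref{J_gradient_error}. For the first term, Lemma~\ref{lem:X_gradient_approx}, the bound $M_1(t)$ on $\nabla_xu$, and Assumption~\ref{ass2} give a contribution of at most $C\delta_n^2$. For the second term, Cauchy--Schwarz together with the definition \eqref{loss_func}, which gives $\|I^{\psi^n}\|_{L^2}^2=2L(\theta^*(\psi^n);\psi^n)$, gives at most $C L(\theta^*(\psi^n);\psi^n)$. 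Altogether
\[
|\boldsymbol{E}^{\psi^n}|^2\le C\big(\delta_n^2+L(\theta^*(\psi^n);\psi^n)\big).
\]
The constant $C$ here must be uniform in $\psi$. I would check that the constant in Lemma~\ref{lem:X_gradient_approx} can be taken uniform in $\psi$ using the global bounds in (A1)--(A3); failing that, I would carry the constant along the iterates.

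\textbf{Step 3: descent inequality and summability.} With $\psi^{n+1}=\psi^n-\kappa_nP^n$, the descent lemma gives
\[
\mathcal{J}(\psi^{n+1})\le \mathcal{J}(\psi^n)-\kappa_n\nabla\mathcal{J}^{\T}P^n+\tfrac{L_{\mathcal J}}{2}\kappa_n^2|P^n|^2 .
\]
Applying Young's inequality ($\nabla\mathcal{J}^{\T}e\le\frac14|\nabla\mathcal{J}|^2+|e|^2$) and using $\kappa_n\to0$, so that $L_{\mathcal J}\kappa_n$ is small for large $n$, I would obtain
\[
\big|\mathcal{J}(\psi^{n+1})-\mathcal{J}(\psi^n)\big|\le \tfrac74\kappa_n|\nabla\mathcal{J}(\psi^n)|^2+\tfrac{21}{4}\kappa_n|\boldsymbol{\nu}^n|^2+C\kappa_n\big(\delta_n^2+L(\theta^*(\psi^n);\psi^n)\big).
\]
The matching lower-side inequality is $\mathcal{J}(\psi^{n+1})\le\mathcal{J}(\psi^n)-\frac14\kappa_n|\nabla\mathcal{J}|^2+(\text{summable})$. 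Together with the lower bound \eqref{lower_bound} and the summability condition \eqref{outer_condition}, which is assumed to cover the Monte Carlo error, a Robbins--Siegmund/quasi-monotone argument gives that $\mathcal{J}(\psi^n)$ converges to some $\mathcal{J}^*$ and that $\sum_n\kappa_n|\nabla\mathcal{J}(\psi^n)|^2<\infty$. Since $\sum_n\kappa_n=\infty$, this yields $\liminf_n|\nabla\mathcal{J}(\psi^n)|=0$.

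\textbf{Step 4: upgrade to a full limit.} To get $\lim_n|\nabla\mathcal{J}(\psi^n)|=0$ rather than only the $\liminf$, I would use the standard Bertsekas--Tsitsiklis crossing argument. Suppose $|\nabla\mathcal{J}|$ oscillated infinitely often between $\varepsilon/2$ and $\varepsilon$. On each such excursion, $\sum\kappa_\lambda$ is bounded below because $\nabla\mathcal{J}$ is Lipschitz and the steps are $\kappa_\lambda|P^\lambda|$. This contradicts the summability $\sum\kappa_n|\nabla\mathcal{J}(\psi^n)|^2<\infty$.

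\textbf{Step 5: error decomposition.} Write
\[
|\mathcal{J}(\psi^n)-\inf\mathcal{J}|\le|\mathcal{J}^*-\inf\mathcal{J}|+\sum_{\lambda\ge n}\big|\mathcal{J}(\psi^{\lambda+1})-\mathcal{J}(\psi^\lambda)\big|
\]
and insert the one-step bound from Step 3. This produces exactly the terms $K_1$ through $K_5$ in \eqref{J_error}.
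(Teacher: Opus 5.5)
Your proposal is correct and follows the paper's proof essentially step for step. You derive global Lipschitz continuity of $\nabla_\psi\mathcal{J}$ from the stability of $X^{\alpha^\psi}$, $\nabla_\psi X^{\alpha^\psi}$ and $(Y,Z)$, bound $|\boldsymbol{E}^{\psi^n}|\le C(\delta_n+\sqrt{L(\theta^*(\psi^n);\psi^n)})$, use a two-sided descent inequality for small $\kappa_n$, and telescope to get \eqref{J_error}, all as the paper does. The differences are cosmetic: you use Robbins--Siegmund where the paper uses a Cauchy-sequence argument, and a Bertsekas--Tsitsiklis crossing argument where the paper invokes Lemma~\ref{lem:auxi_lemma}, which, unlike your version, needs $|P^n|$ bounded so that $|\nabla_\psi\mathcal{J}(\psi^{n+1})-\nabla_\psi\mathcal{J}(\psi^n)|\le C\kappa_n$.
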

	
	\begin{remark}
		The error decomposition in \eqref{J_error} consists of five distinct sources of error. Specifically, $K_1$ represents the modeling error associated with
		the parameterization of the policy. It measures the gap caused by the possibility that the optimal policy does not belong to the prescribed parametric policy family $\Psi$. The term $K_2$ corresponds to the convergence
		error of the gradient descent algorithm. The term $K_3$ arises from the Monte Carlo approximation used in the numerical implementation. The term $K_4$ is induced by approximating $\nabla_{\psi}X^{\alpha^{\psi^\lambda}}$ with the finite-difference process $\Delta^{\psi^\lambda,\delta_\lambda}$. In light of
		Lemma~\ref{lem:X_gradient_approx}, this approximation error is of order $\delta_\lambda^2$, and hence $K_4$ quantifies the cumulative error arising from this approximation. Lastly, $K_5$ stands for the learning error in approximating 	$H_a^{\alpha^{\psi^\lambda}}(t,X_t^{\alpha^{\psi^{\lambda}}})$, which measures the error introduced by
		learning the Hamiltonian gradient.
	\end{remark}

\section{Numerical Analysis}\label{sec:numerical}
This section presents numerical experiments demonstrating the practical efficacy of the proposed RL algorithm based on SMP. We first consider a linear-quadratic (LQ) stochastic control problem, and then examine the optimal portfolio allocation problem under exponential utility.

\subsection{LQ Stochastic Control Problem}\label{sec:LQ}
Consider the case  $d=r=l=1$. For any $\alpha\in\A[0,T]$,  the state process $X^{\alpha}=(X_t^{\alpha})_{t\in [0,T]}$ evolves as the following controlled dynamics, for $t\in(0,T]$, 
\begin{align}\label{SDE_dynamic-LQ}
\d X_t^{\alpha}=(\mu X_t^{\alpha}+\alpha_t)\d t+\sigma \d W_t,\quad X_0=x\in\R.
\end{align}
The corresponding cost functional is specified as:
\begin{align}\label{cost_func-LQ}
J(x;\alpha)=\E\left[\int_0^T\frac{1}{2}\left(A\left(X_t^{\alpha}\right)^2+ \alpha_t^2\right) \d t+\frac{C}{2}\left(X_T^{\alpha}\right)^2\Big|X_0=x\right],
\end{align}
where $\mu,\sigma\in\R$ and $A,B,C>0$ are unknown constants. 

The agent's goal is to minimize the cost functional \eqref{cost_func-LQ} over all $\alpha\in\A[0,T]$. By using the SMP and standard LQ arguments, the optimal control and the solution to the adjoint BSDE \eqref{eq:BSDE} under the optimal control are respectively given by, for any $t\in[0,T]$,
\begin{align}
\alpha^*_t&=-\left(\mu+h+\frac{2h\ell e^{-2h(T-t)}}{1-\ell e^{-2h(T-t)}}\right)X_t^*,\label{eq:optimal-control-LQ}\\
Y_t&=B\left(\mu+h+\frac{2h\ell e^{-2h(T-t)}}{1-\ell e^{-2h(T-t)}}\right)X_t^*\label{eq:optimal-Y-LQ}
\end{align}
with the parameters $h:=\sqrt{\mu^2+\frac{A}{B}}$ and $\ell:=\frac{C-B(\mu+h)}{C+B(\mu-h)}$. Here, the process $X^*=(X_t^*)_{t\in[0,T]}$ is the state process under the optimal control $\alpha^*=(\alpha_t^*)_{t\in[0,T]}$, i.e., $X_t^*:=X_t^{\alpha^*}$ for $t\in[0,T]$. Motivated by the explicit structures in \eqref{eq:optimal-control-LQ}, \eqref{eq:optimal-Y-LQ} and the Hamiltonian
gradient $\nabla_a H(t,x,a,y,z)=y+Ba$, we adopt the following finite-dimensional parameterizations, for $(t,x)\in[0,T]\times\R$,
\begin{align}\label{eq:parameter-LQ}
\begin{cases}
\displaystyle u^{\psi}(t,x)=-\left(\psi_1+\frac{2\psi_2\psi_3e^{-2\psi_2(T-t)}}{1-\psi_3e^{-2\psi_2(T-t)}}\right)x,\\[1em]
\displaystyle h^{\theta}(t,x)=\left(\theta_1\theta_4+\frac{2\theta_2\theta_3\theta_4e^{-2\theta_2(T-t)}}{1-\theta_3e^{-2\theta_2(T-t)}}\right)x+\theta_4u^{\psi}(t,x),
\end{cases}
\end{align}
where, the corresponding target parameters are $\psi^*=( \psi_1^*,\psi_2^*,\psi_3^*)=(\mu+h,h,\ell)$ and $\theta^*=(\theta_1^*,\theta_2^*, \theta_3^*, \theta_4^*)=(\mu+h,h,\ell,B)$. With these choices, $u^{\psi^*}(t,x)$ coincides with the optimal feedback control, while $h^{\theta^*}(t,X_t^{\alpha^{\psi}})$ coincides with the true Hamiltonian gradient given by
$\nabla_a H(t,X_t^{\alpha^{\psi}},u^{\psi}(t,X_t^{\alpha^{\psi}}),Y_t^{\alpha^{\psi}},Z_t^{\alpha^{\psi}})$ along the corresponding LQ structure. We can verify that the Assumptions \ref{ass1} and \ref{ass2} are satisfied by choosing all parameter values from a compact set.

The parameters for simulation are set as follows. The time horizon is $T=1.0$, the number of time steps is $N_T=100$, and the discretization step is $\Delta t=T/N_T=0.01$. The underlying LQ dynamics and cost use $\mu=0.5$, $\sigma=0.2$, $A=1.0$, $B=0.2$ and $C=1.0$; while the initial state is sampled as $X_0\sim\mathcal {\cal N}(2.0,0.5^2)$. We use a batch size of $N=512$ trajectories per outer iteration and an evaluation batch size of $N_{eval}=2048$. The algorithm runs for $N_{iter}=1200$ outer iterations, with $N_{inner}=4$ inner critic updates per outer iteration. The initial  parameters are $\psi_0=(3.0,2.3,0.35)$ and $\theta_0=(3.0,2.3,0.35,0.1)$. The learning-rate and perturbation schedules are provided by, for $k,n\geq1$,
\begin{align*}
\eta_k=\frac{2.0}{(k+20)^{0.65}},\quad
\kappa_n=\frac{12.0}{(n+20)^{0.65}},\quad
\epsilon_k=\frac{0.02}{(k+20)^{0.30}},\quad
\delta_n=\frac{0.05}{(n+20)^{0.30}}.
\end{align*}

We simulate the state process and monitor the convergence of the parameter iterates generated by Algorithm~\ref{main_algorithm}. Figure~\ref{fig:LQ-convergence} illustrates the evolution of selected parameters during training, demonstrating that they indeed converge toward their true values. This confirms that the proposed model-free learning scheme is capable of recovering the optimal solution without knowledge of the system dynamics. The convergence is observed to be stable, with the parameters settling to their targets after a transient phase.

Figure~\ref{fig:LQ-policy}-(a) compares the learned policy with the true optimal policy over the entire time horizon. The learned gain closely tracks the optimal one, indicating that the policy parameterization is sufficiently expressive and that the SMP-DPG updates correctly identify the optimal control law. The cost history on matched evaluation paths is reported in Figure~\ref{fig:LQ-policy}-(b), which confirms that the learned policy's performance improves throughout training and converges to the optimal cost. The gap between the learned and optimal costs narrows steadily and eventually becomes negligible, further validating the effectiveness of the algorithm.
\begin{figure}[htbp]
\centering
  \subfigure[]{
        \includegraphics[width=7cm]{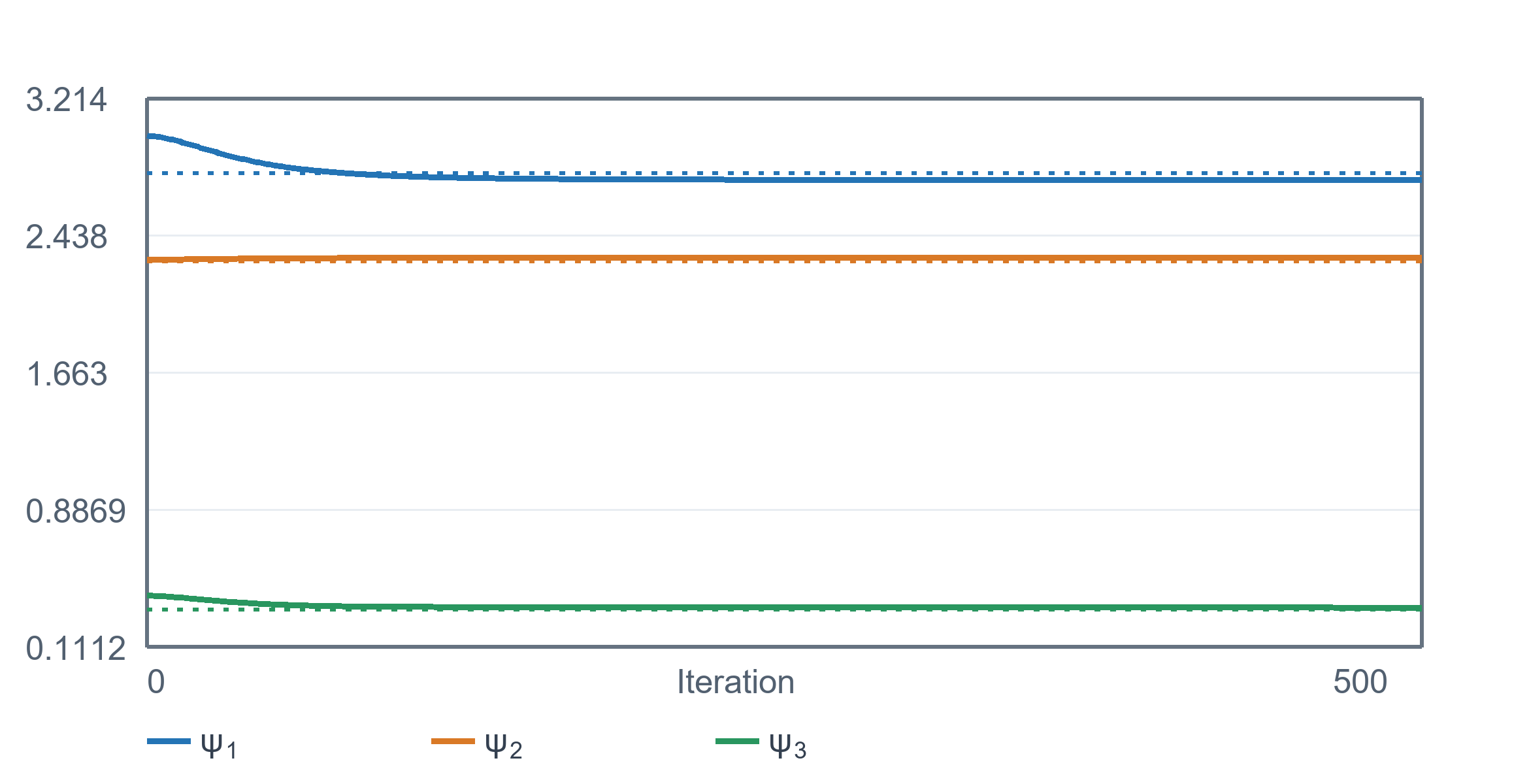}
    }
  \subfigure[]{
        \includegraphics[width=7cm]{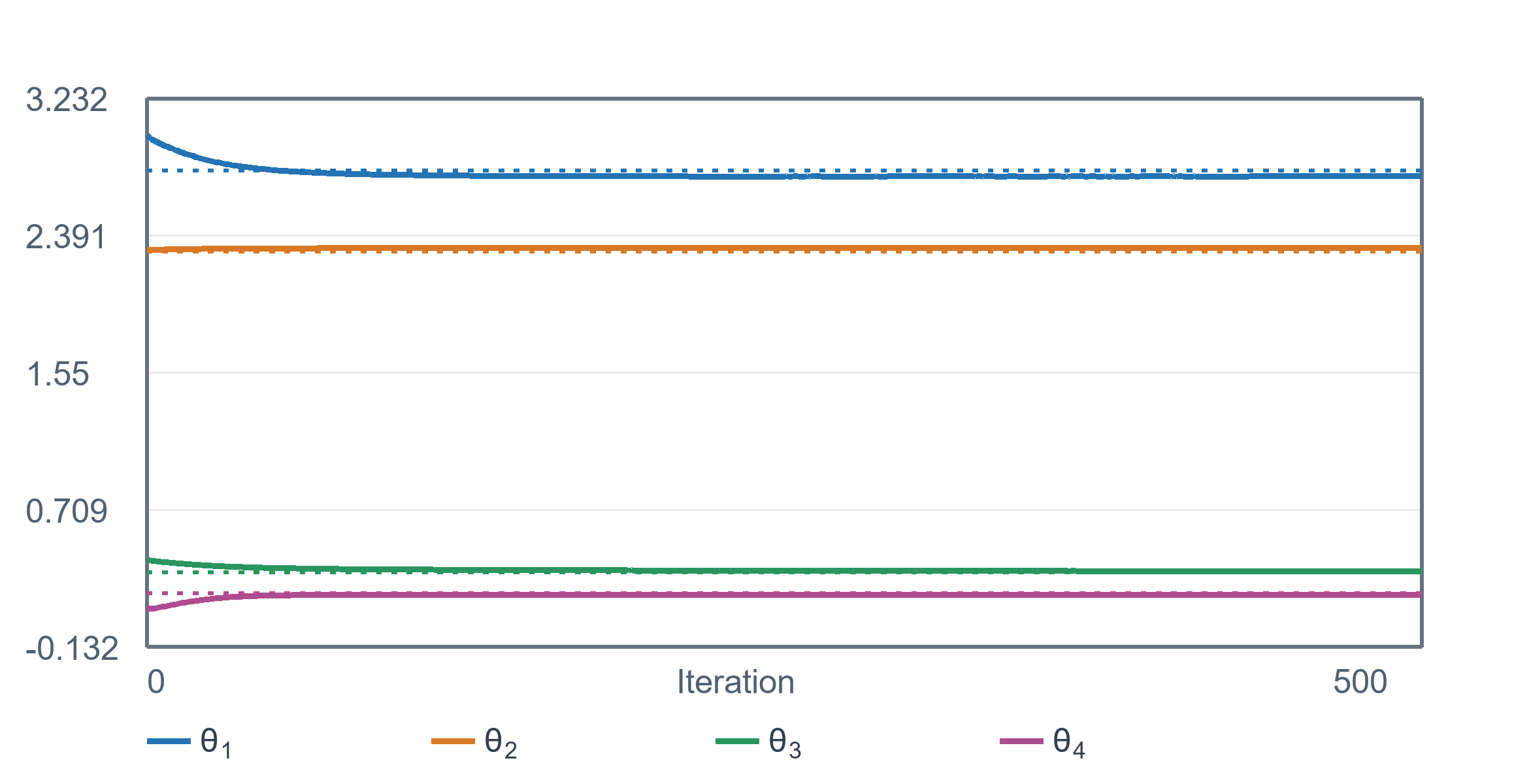}
    }
\caption{ Convergence of parameter iterations using Algorithm \ref{main_algorithm}. The solid line represents the learned parameter values during the iterations, while the dashed line represents the true parameter values.}\label{fig:LQ-convergence}
\end{figure}

\begin{figure}[htbp]
\centering
  \subfigure[]{
        \includegraphics[width=7cm]{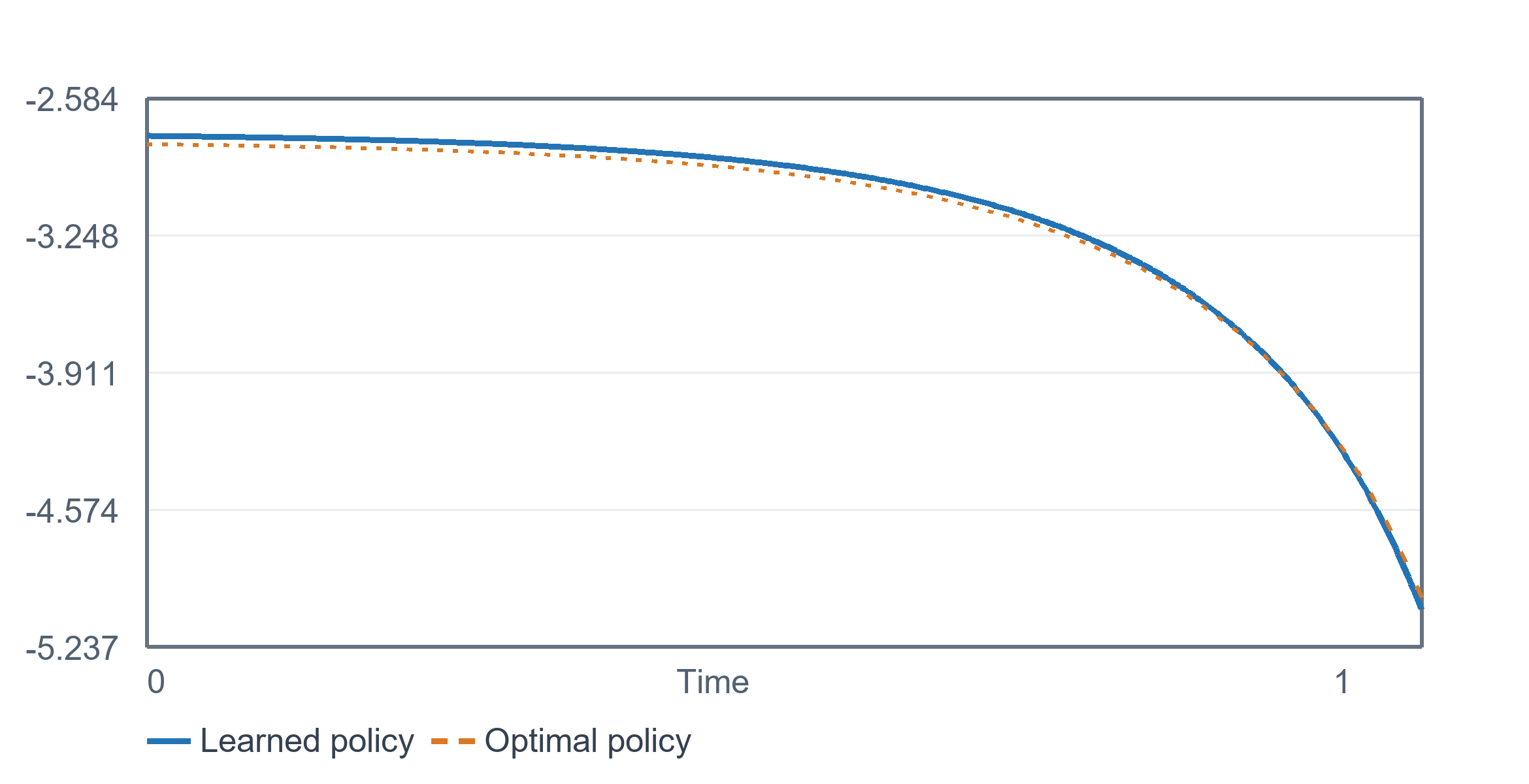}
    }
  \subfigure[]{
        \includegraphics[width=7cm]{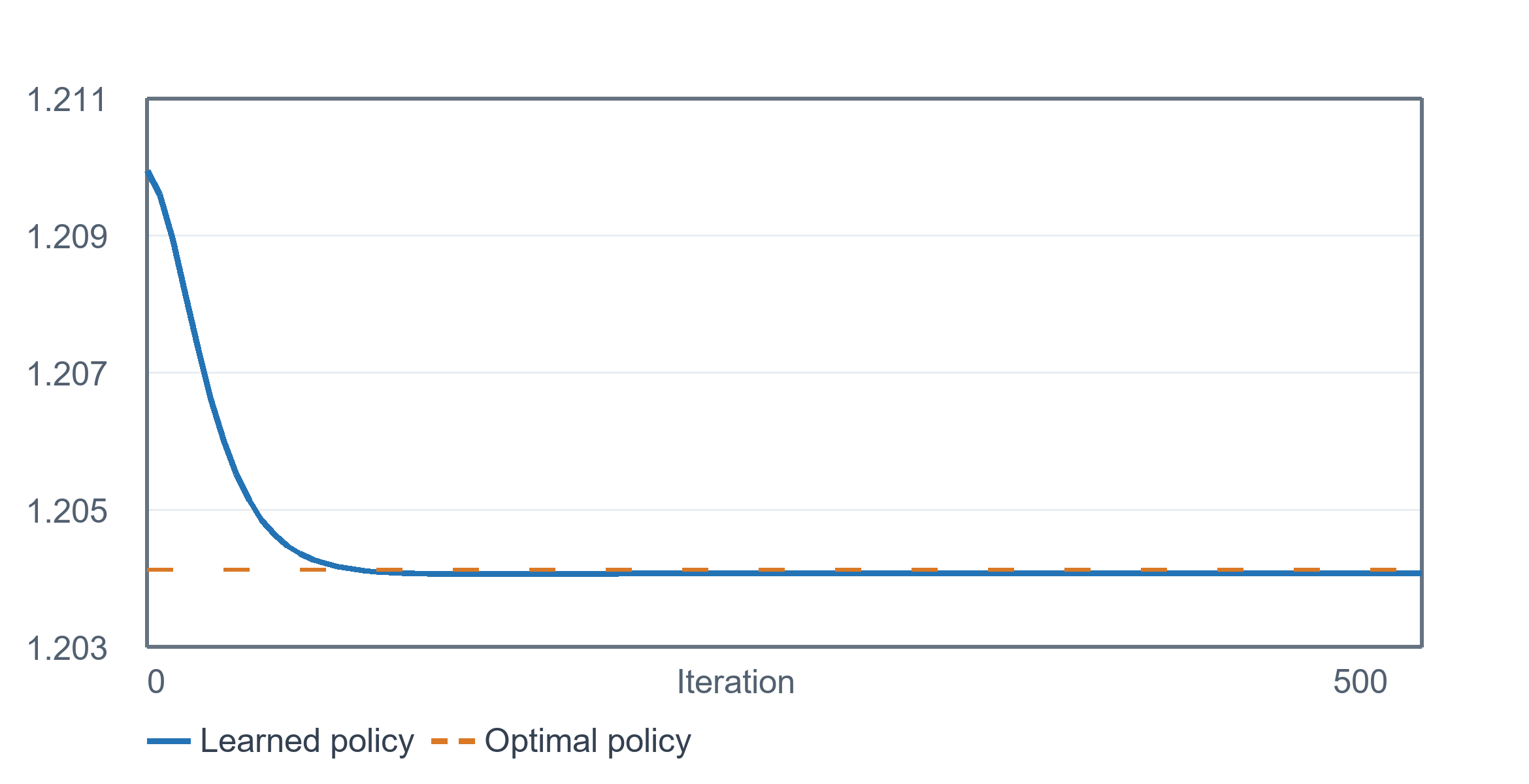}
    }
\caption{Left panel (a): The learnt policy vs the true optimal policy. The state level is set as $x=1$. Right panel (b): The cost history of the learnt policy vs the true optimal policy. }\label{fig:LQ-policy}
\end{figure}

\subsection{Optimal Portfolio Allocation Problem}

In this example, we consider a financial market consisting of a risky asset and a risk-free money account. The price process $S=(S_t)_{t\in[0,T]}$ of the risky asset and the price process $B=(B_t)_{t\in[0,T]}$ of the risk-free money account follow the following dynamics respectively
\begin{align*}
 \d S_t=\mu S_t \d t+\sigma S_t \d W_t, \quad
 \d B_t=r B_t \d t,
\end{align*}
where, the return rate $\mu> 0$, the volatility $\sigma > 0$ and the risk-free interest rate $r>0$ are assumed to be unknown. 

At each time $t\in[0,T] $, let $\alpha_t$ denote the amount of wealth invested in the risky asset $S$.  Under an admissible control $\alpha=(\alpha_t)_{t\in[0,T]}\in\A[0,T]$, the corresponding self-financing wealth process $X^{\alpha}=(X_t^{\alpha})_{t\in[0,T]}$ evolves according to the following dynamics, for $t\in(0,T]$,
\begin{align*}
\d X_t^{\alpha}=\left(r X_t^{\alpha}+(\mu-r) \alpha_t\right)\d t+\sigma \alpha_t \d W_t,\quad X_0=x>0.
\end{align*}
The agent is endowed with the exponential utility function $U(x)=-\gamma e^{-\gamma x}$ with the risk-aversion parameter $\gamma>0$ being also assumed to be unknown. Then, the agent seeks to maximize the expected utility of terminal wealth at time $T$. Equivalently, the agent minimizes the following cost functional over all admissible controls $\alpha\in\A[0,T]$,
\begin{align}\label{eq:exam2J}
J(x;\alpha)=\E\left[-U(X_T^{\alpha})| X_0=x\right].
\end{align}

The simulation parameters are set as follows. The time horizon is $T=1.0$ with $N_T=100$ time steps and $\Delta t=T/N_T=0.01$.  We set $r=0.02$, $\mu=0.20$,  $\sigma=0.10$ and the risk-aversion parameter $\gamma=1.0$. Initial wealth is uniformly sampled from the interval $[1.0,2.0]$.  The policy function $\mu^\psi(t,x)$ and the auxiliary Hamiltonian gradient function $\tilde{h}^\theta(t,x,a)$ are implemented by a {\it two-layer MLP} with ReLU activations and hidden dimension 32. We then parameterize the Hamiltonian gradient function as $h^\theta(t,x)=\tilde{h}^\theta(t,x,\mu^\psi(t,x))$.

We use a batch size of $N=128$ trajectories per outer iteration and an evaluation batch size of $N_{eval}=4096$. The algorithm runs for $N_{iter}=150$ outer iterations, with $N_{inner}=8$ inner critic updates per outer iteration.  For $n,k\geq1$,  the learning-rate and perturbation schedules are given by
\begin{align*}
\eta_k=\frac{0.25}{(k+20)^{0.65}},\quad
\kappa_n=\frac{100.0}{(n+20)^{0.65}},\quad
\epsilon_k=\frac{0.02}{(k+20)^{0.30}},\quad
\delta_n=\frac{0.05}{(n+20)^{0.30}}.
\end{align*}

We simulate the controlled state process and evaluate the policies generated by Algorithm~\ref{main_algorithm}. Figure~\ref{fig:portfolio-policy}-(a) compares the learned policy with the true optimal policy over the entire time horizon, while Figure~\ref{fig:portfolio-policy}-(b) illustrates the cost history. As shown in Figure~\ref{fig:portfolio-policy}, the learned policy closely tracks the optimal one, and its performance improves steadily throughout training, with the cost converging to the optimal value.  The cost history shows a stable downward trend, with the objective value decreasing rapidly in the early iterations and then stabilizing near the optimum, which provides empirical evidence for the convergence of algorithm.

Since a neural network parameterization is used in this example, Assumptions~\ref{ass1} and~\ref{ass2} are generally difficult to verify or satisfy. However, these assumptions are introduced only to derive the theoretical convergence guarantee for the algorithm; they are not required for the implementation or execution of the algorithm itself. In other words, they serve as sufficient analytical conditions rather than operational requirements. The numerical results of this example therefore demonstrate the robustness and practical effectiveness of the proposed algorithm beyond the idealized theoretical setting. This also suggests that, although the assumptions may be violated under neural network approximation, the algorithm can still perform reliably in practice, highlighting its potential for broader applications.
\begin{figure}[htbp]
\centering
  \subfigure[]{
        \includegraphics[width=7cm]{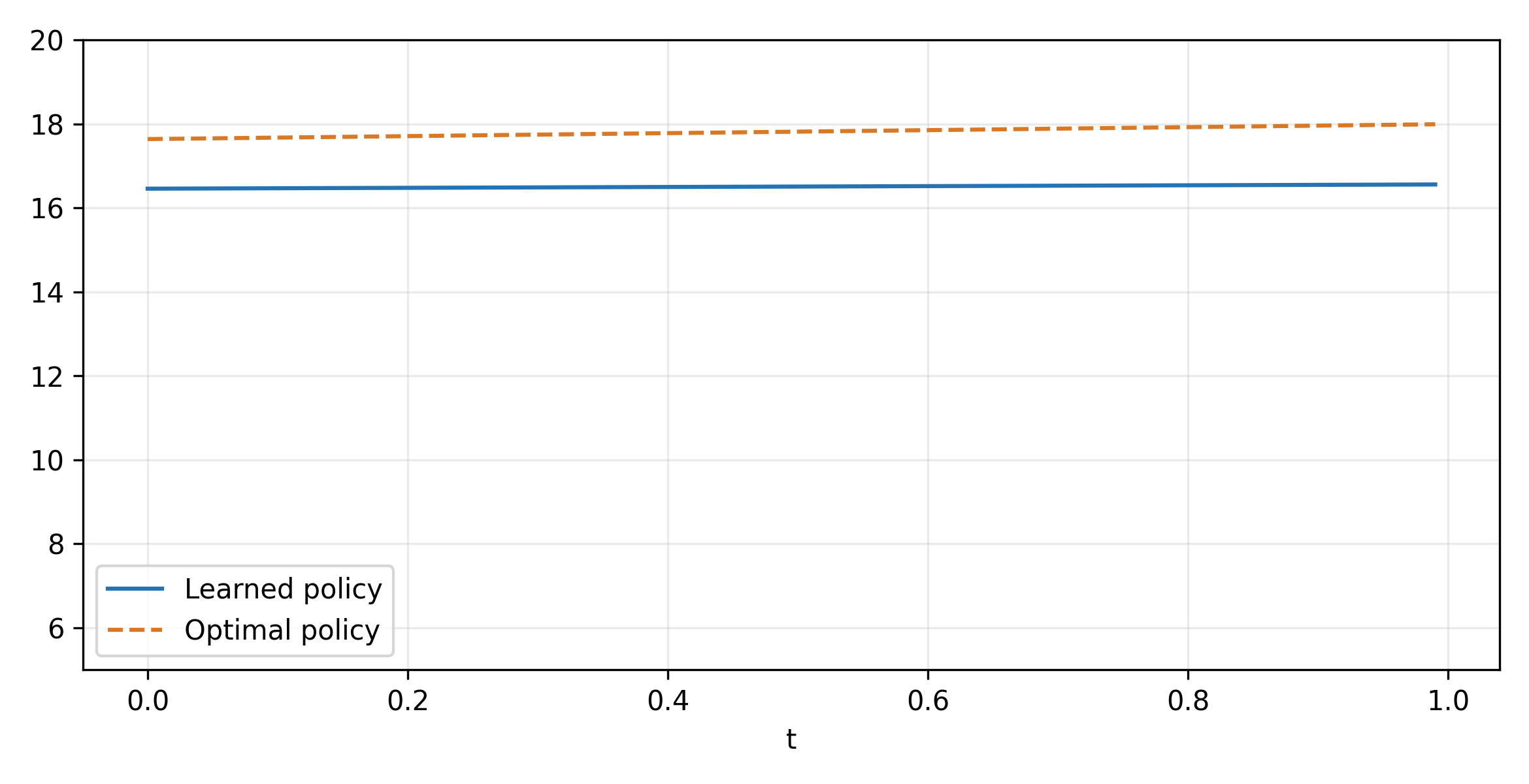}
    }
  \subfigure[]{
        \includegraphics[width=7cm]{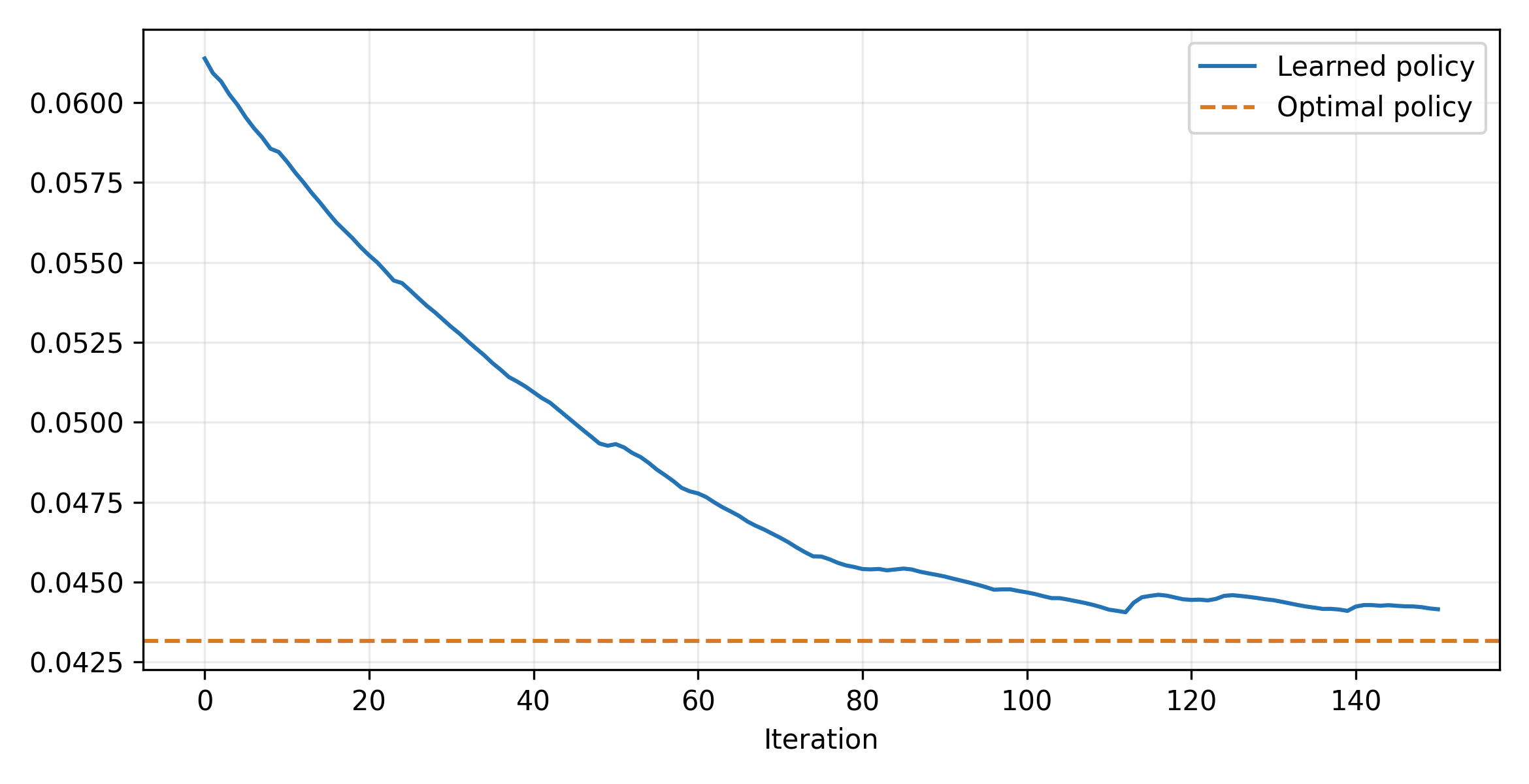}
    }
\caption{Left panel (a): The learnt policy vs the true optimal policy.  The state level is set as $x=1$. Right panel (b): The cost history of the learnt policy vs the true optimal policy. }\label{fig:portfolio-policy}
\end{figure}

\section{Conclusion}\label{sec:conclusion}
In this paper, we developed a DPG framework for continuous-time stochastic control based on the SMP. Our goal is not to propose an algorithm that is computationally more efficient than existing RL methods, but rather to provide an alternative SMP-based route to policy optimization. The proposed method learns the Hamiltonian gradient directly and uses it to construct the policy update. We also establish convergence results for both the inner-loop learning of the Hamiltonian gradient and the outer policy iteration, thereby providing theoretical justification for the proposed framework.

A natural direction for future research is to extend the proposed method to stochastic control problems wit general dynamic (state-control) constraints (cf. Bo et al.~\cite{BWY}). Such problems are often difficult to address within the standard HJB framework, whereas the variational structure of the SMP may still provide a useful foundation for developing policy gradient methods. Although the corresponding SMP has been derived in \cite{BWY}, we are currently unable to establish a decoupling field for the associated adjoint BSDE as in Lemma~\ref{lem:decoupling_field}, and hence Algorithm~\ref{main_algorithm} cannot be directly applied. In particular, since the adjoint processes $(Y,Z)=(Y_t,Z_t)_{t\in[0,T]}$ are not directly observable from trajectory data, they need to be represented in terms of the time, state and control. Establishing such a representation, together with the corresponding theoretical guarantees, therefore requires further analysis and constitutes an important direction for future research on constrained RL.

	\appendix
	
	\section{Proofs of Main Results}\label{sec:proof_main} 
	
	This section provides the proofs for the main theoretical results in Section~\ref{sec:main_result}.
	\begin{proof}[Proof of Proposition~\ref{prop:inner_loop}]
		We denote by $\boldsymbol{\mu}_{N,N_T}^{k,n}$ the Monte Carlo discretization error, i.e., for $k,n\in\mathbb{N}$
		\begin{align}\label{MC_error}
			\boldsymbol{\mu}_{N,N_T}^{k,n}=G^{n,k}-\nabla_{\theta}L(\theta^k;\psi^n)-\boldsymbol{e}^{\psi^n}_{\theta^k}.
		\end{align}
		We have from the LLN that $\lim_{N,N_T\to\infty}\boldsymbol{\mu}_{N,N_T}^{k,n}=0$, $\Pb$-a.s.. Differentiating $L(\theta;\psi^n)$ twicely with respect to $\theta$, we obtain that
		\begin{align*}	\pa_{\theta_{l_1}\theta_{l_2}}^2L(\theta;\psi^n)&=\E\left[\int_0^T\pa_{\theta_{l_1}}h^{\theta}(t,X_t^{\alpha^{\psi^n}},\alpha_t^{\psi^n})^{\T}\pa_{\theta_{l_2}}h^{\theta}(t,X_t^{\alpha^{\psi^n}},\alpha_t^{\psi^n})\d t\right]\\
			&\quad+\E\left[\left(h^{\theta}(t,X_t^{\alpha^n},\alpha_t^n)-H^{\alpha^n}(t,X_t^{\alpha^n},\alpha^n_t)\right)^{\T}\pa_{\theta_{l_1}\theta_{l_2}}^2h^{\theta}(t,X_t^{\alpha^n},\alpha_t^n)\d t\right].
		\end{align*}
		By using Lemma~\ref{lem:X_moment}, Lemma~\ref{lem:decoupling_field} and Assumption~\ref{ass2}, there exists a constant $C>0$ which depends on $T$ and $M$ introduced in Assumption~\ref{ass1} such that
		\begin{align}\label{Hessian_bound_inner}
			\left|	\pa_{\theta_{l_1}\theta_{l_2}}^2L(\theta;\psi^n)\right|\leq 6\E\left[\int_0^TM_1(t)^2\left(1+|X_t^{\alpha^{\psi}}|^2\right)\d t\right]\leq C.
		\end{align}
		In a similar fashion, we can also derive that
		\begin{align}\label{gradient_bound_inner}
			\left|\nabla_{\theta}L(\theta;\psi^n)\right|\leq C.
		\end{align}
		Hence, we have from Taylor's expansion and Cauchy-Schwartz inequality that
		\begin{align*}
			&L(\theta^{k+1};\psi^n)-L(\theta^k;\psi^n)\leq \nabla_{\theta}L(\theta^k;\psi^n)^{\T}\left(\theta^{k+1}-\theta^k\right)+\frac{C}{2}\left|\theta^{k+1}-\theta^k\right|^2\\
			=&-\eta_k\nabla_{\theta}L(\theta^k;\psi^n)^{\T}\left(\nabla_{\theta}L(\theta^k;\psi^n)+\boldsymbol{e}_{\theta^k}^{\psi^n}+\boldsymbol{\mu}_{N,N_T}^{k,n}\right)+\frac{C\eta_k^2}{2}\left|\nabla_{\theta}L(\theta^k;\psi^n)+\boldsymbol{e}_{\theta^k}^{\psi^n}+\boldsymbol{\mu}_{N,N_T}^{k,n}\right|^2\\
			\leq&-\left(\frac{\eta_k}{2}-C\eta_k^2\right)\left|\nabla_{\theta}L(\theta^k;\psi^n)\right|^2+\left(\frac{\eta_k}{2}+C\eta_k^2\right)\left|\boldsymbol{e}_{\theta^k}^{\psi^n}+\boldsymbol{\mu}_{N,N_T}^{k,n}\right|^2.
		\end{align*}
		Summing the above equation from $0$ to $k$, we obtain from the fact $L(\theta;\psi)\geq 0$ that
		\begin{align}\label{sum_bound_inner}
			\sum_{\lambda=0}^{k}\left(\frac{\eta_{\lambda}}{2}-C\eta_{\lambda}^2\right)\left|\nabla_{\theta}L(\theta^{\lambda};\psi^n)\right|^2\leq L(\theta^0;\psi^n)+\sum_{\lambda=0}^k\left(\frac{\eta_{\lambda}}{2}+C\eta_{\lambda}^2\right)\left|\boldsymbol{e}_{\theta^{\lambda}}^{\psi^n}+\boldsymbol{\mu}_{N,N_T}^{\lambda,n}\right|^2.
		\end{align}
		By using \eqref{inner_condition}, we have that, the sum on the right hand of \eqref{sum_bound_inner} is convergent as $k\to\infty$ (we can choose both $N$ and $N_T$ large enough such that $|\boldsymbol{\mu}_{N,N_T}^{\lambda,n}|\leq |\boldsymbol{e}_{\theta^{\lambda}}^{\psi^n}|$), and hence \begin{align}\label{series_convergent_inner}
			\sum_{k=0}^{\infty}\left(\frac{\eta_k}{2}-C\eta_k^2\right)\left|\nabla_{\theta}L(\theta^k;\psi^n)\right|^2<\infty.
		\end{align}
		Moreover, thanks to \eqref{Hessian_bound_inner} and \eqref{gradient_bound_inner}, there exists a constant $C>0$ which depends on $T$ and $M$ introduced in Assumption~\ref{ass1} such that
        \begin{align*}
			\left|\nabla_{\theta}L(\theta_{k+1};\psi^n)-\nabla_{\theta}L(\theta_k;\psi^n)\right|\leq C\eta_k.
		\end{align*}
		Since $\lim_{k\to\infty}\eta_k=0$, we may assume WLOG that $\eta_k<1/(2C)$ for all $k\geq1$ by discarding finitely many initial terms of $(\eta_k)_{k=0}^{\infty}$ if necessary. Therefore, we can conclude that
		\begin{align*}
			\lim_{k\to\infty}|\nabla_{\theta}L(\theta^k;\psi^n)|=0.
		\end{align*}
		Here, we have applied the following lemma whose proof is delegated to Appendix~\ref{sec:proof_auxi}.
		\begin{lemma}\label{lem:auxi_lemma}
			Let $(a_n)_{n=1}^{\infty}$ be a sequence of positive reals such that $\sum_{n=1}^{\infty}r_na_n^2<\infty$ and $|a_{n+1}-a_n|\leq Cr_n$ for any $n\geq1$, where $C>0$ is a constant and $r_n>0$ satisfies $\sum_{n=1}^{\infty}r_n=\infty$. Then, we have $\lim_{n\to\infty}a_n=0$.
		\end{lemma}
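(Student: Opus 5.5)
Argue by contradiction. Suppose $a_n\not\to 0$. Then there exist $\varepsilon>0$ and infinitely many indices $n$ with $a_n>2\varepsilon$. Since $\sum_n r_n=\infty$ while $\sum_n r_na_n^2<\infty$, the sequence cannot stay above $\varepsilon$ on any tail: otherwise $\sum_n r_na_n^2\geq\varepsilon^2\sum_n r_n=\infty$. Hence there are also infinitely many indices with $a_n<\varepsilon$. Together these give infinitely many disjoint ``upcrossing'' blocks $\{m_j,\dots,n_j-1\}$ with $m_j<n_j<m_{j+1}$, defined by
\[
a_{m_j}<\varepsilon,\qquad a_{n_j}>2\varepsilon,\qquad \varepsilon\le a_k\le 2\varepsilon\ \text{for } m_j<k<n_j.
\]
To construct a block, take $n_j$ to be an index with $a_{n_j}>2\varepsilon$ lying after the previous block, and let $m_j$ be the last index before $n_j$ with $a_{m_j}<\varepsilon$. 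If some intermediate value exceeds $2\varepsilon$, shift $n_j$ to the first such index; this preserves the required properties.

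On each block, the increment bound gives
\[
\varepsilon< a_{n_j}-a_{m_j}\le \sum_{k=m_j}^{n_j-1}|a_{k+1}-a_k|\le C\sum_{k=m_j}^{n_j-1} r_k .
\]
So each block carries $r$-mass at least $\varepsilon/C$. We then want to convert this into a lower bound on $\sum_k r_ka_k^2$ over the block. For the interior indices $m_j<k<n_j$ we have $a_k\geq\varepsilon$, so those terms contribute at least $\varepsilon^2\sum_{m_j<k<n_j}r_k$.

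The main obstacle is the endpoint term $k=m_j$, where $a_{m_j}<\varepsilon$ and $r_{m_j}$ might be large. Two routes are available:
\begin{itemize}
\item If $r_n\to0$, which holds in the application since $\eta_k\to0$, then eventually $r_{m_j}\le\varepsilon/(2C)$, and the interior mass is at least $\varepsilon/(2C)$.
\item Without that assumption, use $a_{m_j+1}\le a_{m_j}+Cr_{m_j}$. If $Cr_{m_j}$ is large, the single term $r_{m_j+1}a_{m_j+1}^2$ does not directly help. Instead, argue that $r_{m_j}a_{m_j}^2$ is irrelevant and start the increment sum at $m_j+1$. This requires $a_{m_j+1}\le\tfrac32\varepsilon$, and that may fail, so one would redefine the blocks with crossings of the intermediate levels $\tfrac32\varepsilon$ and $2\varepsilon$.
\end{itemize}
I would first adopt $r_n\to0$, as justified in the application, or reduce to it, noting that a summable subsequence can be handled separately.

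Summing over the infinitely many disjoint blocks then gives
\[
\sum_k r_ka_k^2\ \geq\ \sum_j\frac{\varepsilon^3}{2C}=\infty,
\]
contradicting $\sum_n r_na_n^2<\infty$. Hence $a_n\to0$.
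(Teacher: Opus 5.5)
Your argument is correct only under the extra hypothesis $r_n\to0$, which the lemma does not assume. Under that hypothesis your block construction and the estimate are fine. The general case is only sketched (``redefine the blocks with crossings of intermediate levels'', ``a summable subsequence can be handled separately''), and it cannot be completed using upcrossings.

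The obstruction is built into upcrossings. An upcrossing can be a single step out of an index $m$ where $r_m$ is huge and $a_m$ is tiny. For example, take $a_m=\delta$, $r_m=R\ge 3\varepsilon/C$ and $a_{m+1}=3\varepsilon$. This respects $|a_{m+1}-a_m|\le Cr_m$. The block $\{m\}$ then contributes only $R\delta^2$ to $\sum_k r_ka_k^2$, which can be made as small as you like, whatever crossing levels you choose. So no lower bound can come from the ascending side of an excursion. The indices with large $r_n$ also cannot simply be set aside, since they are exactly where such jumps occur.

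The missing idea is to look forward from a high point rather than backward from a low one, as the paper does. Given $n_k$ with $a_{n_k}\ge\varepsilon$, let $m_k>n_k$ be minimal with $\sum_{i=n_k}^{m_k-1}r_i\ge\varepsilon/(2C)$. Then for $n_k\le i<m_k$ you get $a_i\ge a_{n_k}-C\sum_{j=n_k}^{i-1}r_j>\varepsilon/2$, because the lower bound on $a_i$ involves only $r_j$ with $j<i$. A large $r_i$, even $r_{m_k-1}$, is then harmless, since it is multiplied by $a_i^2>\varepsilon^2/4$. Each such block therefore contributes at least $\varepsilon^3/(8C)$, and there are infinitely many disjoint ones.

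In your own framework the same fix is to use downcrossings. Take blocks $\{n,\dots,m-1\}$ with $a_n>2\varepsilon$, $a_m<\varepsilon$ and $a_k\ge\varepsilon$ for $n\le k<m$. Then $\varepsilon<a_n-a_m\le C\sum_{k=n}^{m-1}r_k$. Every index of the block, including its endpoint $n$, carries weight $a_k^2\ge\varepsilon^2$, so each block contributes more than $\varepsilon^3/C$ with no condition on the size of $r_n$.

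You are right that $r_n\to0$ holds in the paper's applications, where $r_n$ is $\eta_k$ or $\kappa_n$. Your version would suffice there, but it is not the lemma as stated.
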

On the other hand, by applying Taylor's expansion again, we have, for $k\geq1$,
\begin{align*}
L(\theta^{k+1};\psi^n)-L(\theta^k;\psi^n)&\geq \nabla_{\theta}L(\theta^k;\psi^n)^{\T}\left(\theta^{k+1}-\theta^k\right)-\frac{C}{2}\left|\theta^{k+1}-\theta^k\right|^2\\
&\geq -\left(\frac{3\eta_k}{2}+C\eta_k^2\right)\left|\nabla_{\theta}L(\theta^k;\psi^n)\right|^2-\left(\frac{\eta_k}{2}+C\eta_k^2\right)\left|\boldsymbol{e}_{\theta^k}^{\psi^n}+\boldsymbol{\mu}_{N,N_T}^{k,n}\right|^2.
\end{align*}
By using the assumption $\eta_k<1/(2C)$, the above inequality implies that, for $k,c\geq1$,
\begin{align*}
\left|L(\theta^{c};\psi^n)-L(\theta^k;\psi^n)\right|&\leq \frac74\sum_{\lambda=k}^{c-1}\eta_{\lambda}\left(\left|\nabla_{\theta}L(\theta^{\lambda};\psi^n)\right|^2+\left|\boldsymbol{e}_{\theta^{\lambda}}^{\psi^n}+\boldsymbol{\mu}_{N,N_T}^{\lambda,n}\right|^2\right)\\
&\leq \frac74\sum_{\lambda=k}^{c-1}\eta_{\lambda}\left(\left|\nabla_{\theta}L(\theta^{\lambda};\psi^n)\right|^2+2\left|\boldsymbol{e}_{\theta^{\lambda}}^{\psi^n}\right|^2+2\left|\boldsymbol{\mu}_{N,N_T}^{\lambda,n}\right|^2\right),
\end{align*}
where, we used Cauchy-Schwartz's inequality in the 2nd inequality. Thanks to the convergence of $\sum_k\eta_k|\nabla_{\theta}L(\theta^k;\psi^n)|^2$ and $\sum_k\eta_k|e_k|^2$, we can conclude that $\{L(\theta^k;\psi^n)\}_{k=1}^{\infty}$ forms a Cauchy sequence. We further define $L^{\psi^n}=\lim\limits_{k\to\infty}L(\theta^k;\psi^n)$ and we can deduce the following error decomposition:
\begin{align*}
\left|L(\theta^k;\psi^n)-L^{\psi^n}\right|\leq \frac74\sum_{\lambda=k}^{\infty}\eta_{\lambda}\left(\left|\nabla_{\theta}L(\theta^{\lambda};\psi^n)\right|^2+2\left|\boldsymbol{e}_{\theta^{\lambda}}^{\psi^n}\right|^2+2\left|\boldsymbol{\mu}_{N,N_T}^{\lambda,n}\right|^2\right).
\end{align*}
The proof is therefore complete.
	\end{proof}
	
	\begin{proof}[Proof of Theorem~\ref{thm:algorithm_convergence}]
		We first show that $\psi\to \nabla_{\psi}\mathcal{J}(\psi)$ is globally Lipschitz continuous. In fact, it follows from the chain rule that
		\begin{align}\label{chain_rule}
			\nabla_{\psi}\mathcal{J}(\psi)=\E\left[\int_0^T\left(\nabla_{\psi}\alpha^{\psi}_t\right)^{\T}\nabla_{\alpha}J(\alpha^{\psi})(t)\d t\right],
		\end{align}
where, $\nabla_{\alpha}J(\alpha^{\psi})(\cdot)=\nabla_a H(\cdot,X_{\cdot}^{\alpha^{\psi}},\alpha^{\psi}_{\cdot},Y_{\cdot}^{\alpha^{\psi}},Z_{\cdot}^{\alpha^{\psi}})$ denotes Fr\'{e}chet derivative of the objective functional $J(\alpha)$ defined by \eqref{cost_func} with respect to $\alpha$, evaluated at $\alpha^{\psi}$. The next lemma establishes the Lipschitz continuity of the Fr\'{e}chet derivative $\alpha\to\nabla_{\alpha} J(\alpha)$, whose proof is postponed to Appendix~\ref{sec:proof_auxi}.
		\begin{lemma}\label{lem:Hastable}
			The Fr\'{e}chet derivative $\nabla_{\alpha}J(\alpha)(\cdot)=\nabla_a H(\cdot,X_{\cdot}^{\alpha},\alpha_{\cdot},Y_{\cdot}^{\alpha},Z_{\cdot}^{\alpha})$ of the objective functional $J(\alpha)$ defined by \eqref{cost_func} is Lipschitz with respect to $\alpha$ in $\Psi$, i.e., there exists a constant $C>0$ only depending on $T$ and $M$ introduced in Assumption \ref{ass1} such that, for all $\psi^1,\psi^2\in\R^p$, 
			\begin{align}\label{H_Lipschitz}
				\left\|\nabla_{\alpha}J(\alpha^{\psi^1})-\nabla_{\alpha}J(\alpha^{\psi^2})\right\|_{L^2}\leq C\left\|\alpha^{\psi^1}-\alpha^{\psi^2}\right\|_{L^2}.
			\end{align}
		\end{lemma}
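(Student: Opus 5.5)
The plan is to regard $\alpha^1:=\alpha^{\psi^1}$ and $\alpha^2:=\alpha^{\psi^2}$ as two \emph{given} open-loop controls in $\A[0,T]$. By pathwise uniqueness, $X^{\alpha^{\psi^i}}$ in \eqref{SDE_parameter} coincides with the solution of \eqref{SDE_dynamic} driven by the process $\alpha^i$. The same holds for the adjoint pair $(Y^i,Z^i):=(Y^{\alpha^i},Z^{\alpha^i})$ of \eqref{eq:BSDE}. Hence the feedback structure plays no role in the stability estimate, and I would prove three standard estimates: forward stability, backward stability, and a pointwise Lipschitz bound for $\nabla_aH$. Write $\Delta X:=X^1-X^2$, $\Delta\alpha:=\alpha^1-\alpha^2$, $\Delta Y:=Y^1-Y^2$ and $\Delta Z:=Z^1-Z^2$.

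\emph{Step 1 (forward).} By (A1), $b$ and $\sigma$ are Lipschitz in $(x,a)$ with constant $M$. Itô's formula for $|\Delta X_t|^2$, the BDG inequality and Gronwall then give
$$\E\Big[\sup_{t\in[0,T]}|\Delta X_t|^2\Big]\leq C\|\Delta\alpha\|_{L^2}^2,$$
with $C$ depending only on $T$ and $M$.

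\emph{Step 2 (backward).} The driver $\nabla_xH(t,x,a,y,z)=\nabla_xb^{\T}y+\sum_j\nabla_x\sigma_j^{\T}z_j+\nabla_xf$ is linear in $(y,z)$, and its coefficients are bounded by (A1). I would write the equation for $(\Delta Y,\Delta Z)$ as a linear BSDE with bounded coefficients $\nabla_x b(X^1,\alpha^1)$ and $\nabla_x\sigma(X^1,\alpha^1)$. Its source term is
$$\big(\nabla_xb(X^1,\alpha^1)-\nabla_xb(X^2,\alpha^2)\big)^{\T}Y^2+\big(\nabla_x\sigma(X^1,\alpha^1)-\nabla_x\sigma(X^2,\alpha^2)\big)^{\T}Z^2+\nabla_xf(X^1,\alpha^1)-\nabla_xf(X^2,\alpha^2),$$
and its terminal value is $\nabla_xg(X^1_T)-\nabla_xg(X^2_T)$. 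The standard a priori estimate for linear BSDEs then yields
$$\E\Big[\sup_t|\Delta Y_t|^2\Big]+\|\Delta Z\|_{L^2}^2\leq C\Big(\E\big[|\Delta X_T|^2\big]+\E\Big[\int_0^T(|\Delta X_t|+|\Delta\alpha_t|)^2\big(1+|Y^2_t|+|Z^2_t|\big)^2\d t\Big]\Big).$$
Here I have used that $\nabla^2 b$, $\nabla^2\sigma$, $\nabla^2 f$ and $\nabla^2 g$ are bounded by (A1)–(A2).

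\emph{Step 3 (Hamiltonian gradient).} Since
$$\nabla_aH=\nabla_ab^{\T}y+\sum_j\nabla_a\sigma_j^{\T}z_j+\nabla_af,$$
the same splitting gives
$$|\nabla_aH(\cdot\,;1)-\nabla_aH(\cdot\,;2)|\leq M\big(|\Delta Y|+|\Delta Z|\big)+M\big(|\Delta X|+|\Delta\alpha|\big)\big(1+|Y^2|+|Z^2|\big).$$
Taking $L^2$ norms and inserting Steps 1–2 gives \eqref{H_Lipschitz}, provided the weights $|Y^2|$ and $|Z^2|$ are controlled.

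\emph{Main obstacle.} The cross terms $(|\Delta X|+|\Delta\alpha|)(|Y^2|+|Z^2|)$ are the difficult part. An $L^2$ bound in terms of $\|\Delta\alpha\|_{L^2}$ alone requires $Y^2$ and $Z^2$ to be bounded ${\tt m}\otimes\Pb$-a.s., uniformly in $\psi^2$; Hölder's inequality with the fourth moments of Lemma~\ref{lem:X_moment} would only produce $L^4$ norms of $\Delta\alpha$.

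\begin{itemize}
\item For $Z^2$: by Lemma~\ref{lem:decoupling_field} and \eqref{gradient_bound}, $Z^2_t=\nabla_xv^{\psi^2}(t,X^2_t)\,\sigma(t,X^2_t,\alpha^2_t)$ with $|\nabla_x v^{\psi^2}|\leq C$. This reduces the problem to the growth of $\sigma$.
\item For $Y^2$: I would use the linear-BSDE representation $Y^2_t=\E\big[\Gamma_{t,T}^{\T}\nabla_xg(X^2_T)+\int_t^T\Gamma_{t,s}^{\T}\nabla_xf(s,X^2_s,\alpha^2_s)\d s\,\big|\,\F_t\big]$, where $\Gamma$ is the bounded-coefficient fundamental solution of the linearized state equation. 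This bounds $|Y^2|$ uniformly as soon as $\nabla_xf$ and $\nabla_xg$ are bounded.
\end{itemize}

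I would first try to extract this boundedness from the standing assumptions (A1)–(A2) together with \eqref{gradient_bound}. If that fails, I would fall back to a localization argument: split on $\{|X^2_t|\leq R\}$ and its complement, and absorb the tail using the fourth-moment bound and the $L^2$ smallness obtained in Step 1. I expect this control of the adjoint weights to be the delicate point of the proof; the remaining steps are routine stability estimates.
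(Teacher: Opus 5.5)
Your Steps 1--3 follow the same route as the paper's proof: forward $L^2$ stability, It\^{o}'s formula on $|Y^{\alpha}-Y^{\beta}|^2$, then the pointwise splitting of $\nabla_aH$. You also correctly isolate the only non-routine point. That point is the weights $Y^2,Z^2$ multiplying $|\Delta X|+|\Delta\alpha|$, which appear in the source term of Step 2 and again in Step 3. But the proposal does not close this point, and neither of your remedies can work.

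\begin{itemize}
\item Assumption (A2) bounds only the second derivatives of $f$ and $g$. So $\nabla_xf$ and $\nabla_xg$ grow linearly, and your representation of $Y^2$ gives no a.s.\ bound.
\item The identity $Z^2=\nabla_xv^{\psi^2}\sigma$ inherits the linear growth of $\sigma$ in $(x,a)$ under (A1).
\item The localization fallback cannot produce a bound linear in $\|\Delta\alpha\|_{L^2}$. On the tail set, H\"{o}lder still requires control of $\E[\int_0^T|\Delta\alpha_t|^4\d t]$ by $\|\Delta\alpha\|_{L^2}^4$, which fails in general. At best you get a modulus of continuity. Moreover, truncating in $X$ is useless when the weight is large because the control value itself is large.
\end{itemize}

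In fact \eqref{H_Lipschitz} is false under Assumption~\ref{ass1}, so no argument can close the gap.

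\emph{Counterexample.} Take $d=r=l=p=1$, $b\equiv0$, $\sigma(t,x,a)=\phi(a):=a+\sin a$, $f\equiv0$, $g(x)=x^2/2$ and $u^{\psi}(t,x)=\psi$; Assumption~\ref{ass1} holds. Then $X_t=\xi+\phi(\psi)W_t$ and $\nabla_xH\equiv0$. Hence $Y_t=\E[X_T\,|\,\F_t]=X_t$, $Z_t=\phi(\psi)$, and $\nabla_aH=\phi'(\psi)\phi(\psi)$ is a constant. The ratio of the two sides of \eqref{H_Lipschitz} is $|(\phi\phi')(\psi^1)-(\phi\phi')(\psi^2)|/|\psi^1-\psi^2|$. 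As $\psi^2\to\psi^1=\psi$, it tends to $|\phi'(\psi)^2+\phi(\psi)\phi''(\psi)|$, which equals $\psi$ at $\psi=2k\pi-\pi/2$, $k\geq1$. So no constant works. The same example shows that $\nabla_\psi\mathcal{J}=T\phi\phi'$ is not globally Lipschitz either.

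\emph{The paper's proof.} It has exactly the hole you found, hidden in two places:
\begin{itemize}
\item the appeal to ``local Lipschitz continuity'' of $\nabla_xH$ with a constant depending only on $\|\alpha\|_{L^2}\vee\|\beta\|_{L^2}$ (the $L^2$ norm of $|\Delta\alpha|\,|Y^{\beta}|$ is not controlled by $L^2$ norms);
\item the claim $\sup_{\psi}\E[\int_0^T|\alpha_t^{\psi}|^2\d t]<\infty$, which is infinite in this example.
\end{itemize}

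\emph{What your argument does establish.} If $\nabla_{(x,a)}b$ and $\nabla_{(x,a)}\sigma$ do not depend on $(x,a)$ (affine dynamics, as in the LQ example of Section~\ref{sec:LQ}), all cross terms vanish and Steps 1--3 give \eqref{H_Lipschitz} with $C=C(T,M)$. The same holds under extra hypotheses giving ${\tt m}\otimes\Pb$-a.s.\ bounds on $Y^{\alpha^{\psi}}$ and $Z^{\alpha^{\psi}}$ that are uniform in $\psi$. Without such structure, you can combine H\"{o}lder with higher moments and the feedback bound $|\Delta\alpha_t|\le M_1(t)|\Delta X_t|+M(1+|X_t^1|)|\psi^1-\psi^2|$. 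That yields at best a local estimate in $|\psi^1-\psi^2|$, with constants depending on $|\psi^1|\vee|\psi^2|$, not \eqref{H_Lipschitz}.
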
	
		On the other hand, the next lemma shows that the gradient process $\nabla_{\psi}X^{\alpha^{\psi}}=(\nabla_{\psi}X_t^{\alpha^{\psi}})_{t\in [0,T]}$ defined in \eqref{X_gradient} is Lipschitz continuous with respect to $\psi\in\R^p$. Its proof is reported to Appendix~\ref{sec:proof_auxi}.
		\begin{lemma}\label{lem:gradientstable}
			There exists a constant $C>0$ only depending on $T$ and $M$ introduced in Assumption \ref{ass1} such that, for all $\psi^1,\psi^2\in\R^p$,
			\begin{align}\label{gradient_Lipschitz}
				\E\left[\sup_{t\in [0,T]}\left|\nabla_{\psi}X^{\alpha^{\psi^1}}_t-\nabla_{\psi}X_t^{\alpha^{\psi^2}}\right|^2\right]\leq C\left|\psi^1-\psi^2\right|^2.
			\end{align}
		\end{lemma}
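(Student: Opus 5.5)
We prove Lemma~\ref{lem:gradientstable} by a standard stability estimate for the linear SDE \eqref{X_gradient}. Write $X^i:=X^{\alpha^{\psi^i}}$, $G^i:=\nabla_\psi X^{\alpha^{\psi^i}}$ and $\Delta G:=G^1-G^2$. Equation \eqref{X_gradient} has the form
\begin{align*}
\d G^i_t=\big(A^i_tG^i_t+a^i_t\big)\d t+\big(\Sigma^i_tG^i_t+s^i_t\big)\d W_t,\qquad G^i_0=0,
\end{align*}
with $A^i_t=\nabla_xb+\nabla_ab\,\nabla_xu^{\psi^i}$, $a^i_t=\nabla_ab\,\nabla_\psi u^{\psi^i}$, and analogously $\Sigma^i_t$, $s^i_t$ built from $\sigma$. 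All coefficients are evaluated at $(t,X^i_t,u^{\psi^i}(t,X^i_t))$. Subtracting the two equations gives a linear SDE for $\Delta G$ with coefficients $(A^1,\Sigma^1)$ and source terms $(A^1-A^2)G^2_t+a^1_t-a^2_t$ (drift) and $(\Sigma^1-\Sigma^2)G^2_t+s^1_t-s^2_t$ (diffusion).

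First we need preliminary bounds. By (A1) and (A3), $|A^i_t|+|\Sigma^i_t|\le M(1+M_1(t))$. The function $M_1$ is only square integrable in time, so Gr\"onwall must be applied with the integrable weight $1+M_1(t)^2$. Burkholder--Davis--Gundy and Gr\"onwall then give a moment bound $\E[\sup_t|G^i_t|^4]\le C$. This also needs a growth bound on $\nabla_\psi u^{\psi}$; if (A3) gives only $|\nabla_\psi u^{\psi}(t,x)|\le C(1+|x|)$ locally in $\psi$, we rely on Lemma~\ref{lem:X_moment}. Next, a standard Lipschitz stability argument for \eqref{SDE_parameter} yields
\begin{align*}
\E\Big[\sup_t|X^1_t-X^2_t|^4\Big]\le C|\psi^1-\psi^2|^4.
\end{align*}
This uses the Lipschitz-in-$x$ property of $b(t,x,u^\psi(t,x))$, which follows from (A1) and (A3), together with $|u^{\psi^1}-u^{\psi^2}|\le M(1+|x|)|\psi^1-\psi^2|$ and Lemma~\ref{lem:X_moment}.

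The key step is bounding the coefficient differences. Using the boundedness of second derivatives in (A1) and the bounds $|\nabla_x^2u^\psi|\le M_1$ and Lipschitz-in-$\psi$ of $\nabla_xu^\psi$, $\nabla_\psi u^\psi$ from (A3), we write
\begin{align*}
|A^1_t-A^2_t|\le C(1+M_1(t))\Big(|X^1_t-X^2_t|+(1+|X^1_t|+|X^2_t|)|\psi^1-\psi^2|\Big),
\end{align*}
and similarly for $a$, $\Sigma$ and $s$. Multiplying by $|G^2_t|$ and applying Cauchy--Schwarz, which is why fourth moments are needed, together with the two preliminary estimates, we get
\begin{align*}
\E\int_0^T|(A^1_t-A^2_t)G^2_t|^2\,\d t\le C\int_0^T(1+M_1(t))^2\,\d t\,|\psi^1-\psi^2|^2.
\end{align*}
Finally, BDG applied to $\sup_t|\Delta G_t|^2$ gives a bound by $C\int_0^T(1+M_1(t)^2)\,\E[\sup_{s\le t}|\Delta G_s|^2]\,\d t+C|\psi^1-\psi^2|^2$, and Gr\"onwall concludes \eqref{gradient_Lipschitz}.

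The main obstacle is the integrability bookkeeping: time-dependent bounds $M_1\in L^2$ appear multiplied inside squared stochastic integrals, and products of state differences with $G^2$ and with the linear growth factors $(1+|x|)$ must be controlled. We will handle this with fourth-moment estimates and H\"older's inequality. Uniformity of $C$ in $\psi$ also needs care, since the $\nabla_\psi u^\psi$ bound may depend on $\psi$. If that fails, we would settle for a constant locally uniform in $\psi$, which suffices for the Lipschitz-gradient argument along bounded iterates.
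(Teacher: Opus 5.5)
Your route is the same as the paper's. Both write the two gradient equations as linear SDEs and subtract them. Both then bound the inhomogeneous terms $(A^1-A^2)G^2$, $(\Sigma^1-\Sigma^2)G^2$, $a^1-a^2$, $s^1-s^2$ using the $L^4$ state-stability estimate, the $L^4$ bound on $G^2$ and H\"older, and close with BDG and Gr\"onwall.

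Your hedge about uniformity in $\psi$ is unnecessary. Since $\psi\mapsto u^\psi(t,x)$ is $C^1$ and, by (A3), $M(1+|x|)$-Lipschitz, you get $|\nabla_\psi u^\psi(t,x)|\le M(1+|x|)$ (operator norm) for every $\psi$. So the constant is uniform and no locally uniform fallback is needed. Similarly, the $x$-regularity of $\nabla_\psi u^\psi$ that you need for $a^1-a^2$ and $s^1-s^2$ is not listed in (A3), but it follows. Apply the $\psi$-Lipschitz bound on $\nabla_xu^\psi$ to $\psi\mapsto u^\psi(t,x)-u^\psi(t,y)$; this gives $|\nabla_\psi u^\psi(t,x)-\nabla_\psi u^\psi(t,y)|\le M(1+|x|+|y|)|x-y|$.

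The step that does not hold as written is your bound $|A^1_t-A^2_t|\le C(1+M_1(t))(\cdots)$. Write $u^i_t:=u^{\psi^i}(t,X^i_t)$. The piece $(\nabla_ab(t,X^1_t,u^1_t)-\nabla_ab(t,X^2_t,u^2_t))\nabla_xu^{\psi^1}(t,X^1_t)$ is of size $M_1(t)|u^1_t-u^2_t|$. In turn, $|u^1_t-u^2_t|\le M_1(t)|X^1_t-X^2_t|+M(1+|X^2_t|)|\psi^1-\psi^2|$. Equivalently, the $x$-derivative of $\nabla_a\sigma(t,x,u^\psi(t,x))\nabla_xu^\psi(t,x)$ contains $\nabla^2_{aa}\sigma\,(\nabla_xu^\psi)^2$. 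So the correct weight is $(1+M_1(t))^2$.

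In the drift this is harmless: estimate $\E(\int_0^T|(A^1_t-A^2_t)G^2_t|\d t)^2$ by Cauchy--Schwarz with respect to $(1+M_1(t))^2\d t$. In the diffusion, however, BDG forces $\E\int_0^T|(\Sigma^1_t-\Sigma^2_t)G^2_t|^2\d t$. Your estimates only control this by $C\int_0^T(1+M_1(t))^4\d t\,|\psi^1-\psi^2|^2$, whereas (A3) only gives $\int_0^TM_1(t)^2\d t<\infty$. So the integrability bookkeeping you flagged as the main obstacle does not close under the stated hypotheses.

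The paper's proof does not handle this either. It states its coefficient-difference estimate with a time-independent constant $C$, which tacitly treats $\nabla_xu^\psi$ as bounded. Both arguments become complete under the extra assumption $\int_0^TM_1(t)^4\d t<\infty$ (e.g.\ $M_1$ bounded), and you should state that assumption explicitly rather than claim the estimate with $M_1$ only square-integrable.
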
 
		As a direct consequence of Lemma~\ref{lem:X_gradient_approx} and Lemma~\ref{lem:gradientstable}, there exists a deterministic (positive) function $t\to C(t)$ satisfying $C:=\int_0^T C(t)\d t<\infty$, where the constant $C>0$ depends only on $T$ and $M$ introduced in Assumption \ref{ass1} such that, for any $t\in [0,T]$ and $\psi^1,\psi^2\in\R^p$,
		\begin{align*}
			&\left|\nabla_{\psi}\alpha^{\psi^1}_t-\nabla_{\psi}\alpha^{\psi^2}_t\right|\leq \left|\nabla_{\psi}u^{\psi^1}(t,X_t^{\alpha^{\psi^1}})-\nabla_{\psi}u^{\psi^2}(t,X_t^{\alpha^{\psi^2}})\right|\\
			&\qquad\qquad+\left|\nabla_xu^{\psi^1}(t,X_t^{\alpha^{\psi^1}})\nabla_{\psi}X_t^{\alpha^{\psi^1}}-\nabla_xu^{\psi^2}(t,X_t^{\alpha^{\psi^2}})\nabla_{\psi}X_t^{\alpha^{\psi^2}}\right|\leq C(t)\left|\psi^1-\psi^2\right|.
		\end{align*}
		Therefore, we derive that
		\begin{align}\label{alpha_Lipschitz}
			\left\|\nabla_{\psi}\alpha^{\psi^1}-\nabla_{\psi}\alpha^{\psi^2}\right\|_{L^2}\leq C\left|\psi^1-\psi^2\right|.
		\end{align}
		Consequently, by combing \eqref{chain_rule}, \eqref{H_Lipschitz} and \eqref{alpha_Lipschitz} altogether, we conclude that, for all $\psi^1,\psi^2\in\R^p$,
		\begin{align}\label{nablaJ_Lipschitz}
			&\left|\nabla_{\psi}\mathcal{J}(\psi^1)-\nabla_{\psi}\mathcal{J}(\psi^2)\right|\leq \E\left[\int_0^T\left|\nabla_{\psi}\alpha^{\psi^1}_t-\nabla_{\psi}\alpha^{\psi^2}_t\right|\left|\nabla_{\alpha}\mathcal{J}(\alpha^{\psi^1})(t)\right|\d t\right]\nonumber\\
			&\qquad\qquad+\E\left[\int_0^T\left|\nabla_{\psi}\alpha^{\psi^2}_t\right|\left|\nabla_{\alpha}\mathcal{J}(\alpha^{\psi^1})(t)-\nabla_{\alpha}\mathcal{J}(\alpha^{\psi^1})(t)\right|\d t\right]\nonumber\\
			&\qquad\quad\leq \|\nabla_{\alpha}\mathcal{J}(\alpha^{\psi^1})\|_{L^2}\|\nabla_{\psi}\alpha^{\psi^1}-\nabla_{\psi}\alpha^{\psi^2}\|_{L^2}+\|\nabla_{\psi}\alpha^{\psi^1}\|_{L^2}\|\nabla_{\alpha}\mathcal{J}(\alpha^{\psi^1})-\nabla_{\alpha}\mathcal{J}(\alpha^{\psi^2})\|_{L^2}\nonumber\\
			&\qquad\quad\leq C\left|\psi^1-\psi^2\right|, 
		\end{align}
		where the constant $C>0$ depends only on $T$ and $M$ introduced in Assumption \ref{ass1}. Here, we applied H\"{o}lder's inequality in the 2nd inequality and used the uniform $L^2$-boundedness, which follows from Assumption~\ref{ass1} and Lemma~\ref{lem:decoupling_field}. More precisely, there exists a constant $C>0$ depending only on $T$ and $M$ such that
		\begin{align*}
			\left\|\nabla_{\alpha}J(\alpha^{\psi})\right\|_{L^2} +
			\left\|\nabla_{\psi}\alpha^{\psi}\right\|_{L^2} \leq C, \quad \forall\psi\in\mathbb{R}^p.
		\end{align*}
		Consequently, we apply Taylor's expansion and Cauchy-Schwartz's inequality to derive that
		\begin{align}\label{J_bound}
			\mathcal{J}(\psi^{n+1})&-\mathcal{J}(\psi^n)\leq \nabla_{\psi}J(\psi^n)^{\T}\left(\psi^{n+1}-\psi^n\right)+\frac{C}{2}|\psi^{n+1}-\psi^n|^2\nonumber\\
			&\quad=-\kappa_n\nabla_{\psi}\mathcal{J}(\psi^n)^{\T}\left(\nabla_{\psi}\mathcal{J}(\psi^n)+\boldsymbol{E}^{\psi^n}+\boldsymbol{\nu}_{N,N_T}^n\right)+\frac{C\kappa_n^2}{2}\left|\nabla_{\psi}\mathcal{J}(\psi^n)+\boldsymbol{E}^{\psi^n}+\boldsymbol{\nu}_{N,N_T}^n\right|^2\nonumber\\
			&\quad\leq -\left(\frac{\kappa_n}{2}-C\kappa_n^2\right)\left|\nabla_{\psi}\mathcal{J}(\psi^n)\right|^2+\left(\frac{\kappa_n}{2}+C\kappa_n^2\right)\left|\boldsymbol{E}^{\psi^n}+\boldsymbol{\nu}_{N,N_T}^n\right|^2.
		\end{align}
It follows from the construction of $\boldsymbol{E}^{\psi^n}$ (recalling \eqref{J_gradient_error}), Lemma~\ref{lem:X_gradient_approx} and H\"{o}lder's inequality that, for any $n\geq1$,
		\begin{align*}
			\left|\boldsymbol{E}^{\psi^n}\right|&\leq \left\|h^{\theta^*(\psi^n)}(\cdot,X_{\cdot}^{\alpha^{\psi^n}})\right\|_{L_2}\left(\left\|\nabla_{\psi}u^{\psi^n}(\cdot,X_{\cdot}^{\alpha^{\psi^n}})\right\|_{L^2}+\left\|\nabla_x u^{\psi^n}(\cdot,X_{\cdot}^{\alpha^{\psi^n}})\right\|_{L^2}\left\|F^{\psi^n,\delta}\right\|_{L^2}\right)\\	&\quad+\left\|\nabla_{\psi}\alpha^{\psi^n}\right\|_{L^2}\left\|I^{\psi^n}\right\|_{L^2}\leq C\left(\delta_n+\sqrt{L(\theta^*({\psi^n});\psi^n)}\right),
		\end{align*}
		where, $C>0$ is a constant only depending on $T$ and $M$. Hence, we can deduce from Cauchy-Schwartz's inequality that 
		\begin{align*}
			\mathcal{J}(\psi^{n+1})-\mathcal{J}(\psi^n)&\leq -\left(\frac{\kappa_n}{2}-C\kappa_n^2\right)\left|\nabla_{\psi}\mathcal{J}(\psi^n)\right|^2+\left(\kappa_n+2C\kappa_n^2\right)\left|\boldsymbol{\nu}_{N,N_T}^n\right|^2\nonumber\\
			&\quad+C\kappa_n\left(\delta^2+L(\theta^*({\psi^n});\psi^n)\right).
		\end{align*}
		Summing both sides from $0$ to $n$, and applying \eqref{outer_condition}, we can conclude that by using a similar argument used in the proof of Proposition~\ref{prop:inner_loop} that
		\begin{align*}
			\sum_{n=0}^{\infty}\left(\frac{\kappa_n}{2}-C\kappa_n^2\right)\left|\nabla_{\psi}\mathcal{J}(\psi^n)\right|^2<\infty.
		\end{align*}
		WLOG, we assume that $\kappa_n<1/(2C)$ for all $n\geq1$. Similarly, by utilizing the Lipschitz continuity of $\psi\to\nabla_{\psi}\mathcal{J}(\psi)$, there exists a constant $C>0$ only depending on $T$ and $M$ such that
		\begin{align*}
			\left|\nabla_{\psi}\mathcal{J}(\psi^{n+1})-\nabla_{\psi}\mathcal{J}(\psi^n)\right|\leq C\kappa_n,\quad\forall n\geq1.
		\end{align*}
		We can conclude by taking advantage of Lemma~\ref{lem:auxi_lemma} that $\lim_{n\to\infty}|\nabla_{\psi}\mathcal{J}(\psi^n)|=0$. By applying Taylor's expansion again, we derive that
		\begin{align*}
			\mathcal{J}(\psi^{n+1})-\mathcal{J}(\psi^n)&\geq\left(\nabla_{\psi}\mathcal{J}(\psi^n)\right)^{\T}\left(\psi^{n+1}-\psi^n\right)-\frac{C}{2}\left|\psi^{n+1}-\psi^n\right|^2\\
			&\geq-\frac{7\kappa_n}{4}\left|\nabla_{\psi}\mathcal{J}(\psi^n)\right|^2-\frac{7\kappa_n}{2}\left|\boldsymbol{\nu}_{N,N_T}^n\right|^2-C\kappa_n\left(\delta_n^2+L(\theta^*(\psi^n);\psi^n)\right),
		\end{align*}
		which together with \eqref{J_bound} implies that
		\begin{align*}
			\left|\mathcal{J}(\psi^{n+1})-\mathcal{J}(\psi^n)\right|\leq\frac{7\kappa_n}{4}\left|\nabla_{\psi}\mathcal{J}(\psi^n)\right|^2+\frac{7\kappa_n}{2}\left|\boldsymbol{\nu}_{N,N_T}^n\right|^2+C\kappa_n\left(\delta_n^2+L(\theta^*(\psi^n);\psi^n)\right).
		\end{align*}
		Here, we used the condition $\kappa_n<1/(2C)$. By a similar argument used in the proof of Proposition~\ref{prop:inner_loop}, we can show that $\lim\limits_{n\to\infty}\mathcal{J}(\psi^n)=\mathcal{J}^*$ exists, and moreover, it holds that
		\begin{align*}
			\left|\mathcal{J}(\psi^n)-\mathcal{J}^*\right|\leq \sum_{\lambda=n}^{\infty}\left(\frac{7\kappa_{\lambda}}{4}\left|\nabla_{\psi}\mathcal{J}(\psi^{\lambda})\right|^2+\frac{7\kappa_{\lambda}}{2}\left|\boldsymbol{\nu}_{N,N_T}^{\lambda}\right|^2+C\kappa_{\lambda}\left(\delta_{\lambda}^2+L(\theta^*(\psi^{\lambda});\psi^{\lambda})\right)\right).
		\end{align*}
		This yields the desired error decomposition \eqref{J_error} by using the triangle inequality and the proof is thus complete.
	\end{proof}
	
	\begin{proof}[Proof of Lemma \ref{lem:equivalence-dpg}]
		Let us introduce the value-function-based adjoint processes by $\bar Y_t^{\alpha^\psi}:=\nabla_xV^\psi(t,X_t^{\alpha^\psi})$ and $\bar Z_t^{\alpha^\psi}:=\nabla_{x}^2V^\psi(t,X_t^{\alpha^\psi})
		\sigma(t,X_t^{\alpha^\psi},u^\psi(t,X_t^{\alpha^\psi}))$ for $t\in[0,T]$.
		Differentiating \eqref{eq:policy-evaluation}  and comparing with \eqref{Hamiltonian} yields that
		\begin{align} \label{eq:Aa-Ha}	\nabla_aA^\psi(t,X_t^{\alpha^\psi},u^\psi(t,X_t^{\alpha^\psi}))	=\nabla_aH(t,X_t^{\alpha^\psi},u^\psi(t,X_t^{\alpha^\psi}),\bar Y_t^{\alpha^\psi},\bar Z_t^{\alpha^\psi}).
		\end{align}

Next, we compare $(\bar Y^{\alpha^\psi},\bar Z^{\alpha^\psi})=(\bar Y_t^{\alpha^\psi},\bar Z_t^{\alpha^\psi})_{t\in[0,T]}$ with the standard SMP adjoint process $(Y^{\alpha^\psi},Z^{\alpha^\psi})=(Y_t^{\alpha^\psi},Z_t^{\alpha^\psi})_{t\in[0,T]}$. Differentiating Eq.~\eqref{eq:policy-evaluation} with respect to $x\in\R^d$, and applying It\^o's rule to $\nabla_xV^\psi(t,X_t^{\alpha^\psi})$ yields that
		\begin{align}\label{eq:closed-loop-adjoint-bsde}
			d\bar Y_t^{\alpha^\psi}&=-\left[ \nabla_xH(t,X_t^{\alpha^\psi},\alpha^\psi_t,\bar Y_t^{\alpha^\psi},\bar Z_t^{\alpha^\psi})+\nabla_xu^\psi(t,X_t^{\alpha^\psi})^\top
			\nabla_aH(t,X_t^{\alpha^\psi},\alpha^\psi_t,\bar Y_t^{\alpha^\psi},\bar Z_t^{\alpha^\psi})\right]\d t\nonumber\\
			&\quad+ \bar Z_t^{\alpha^\psi}\d W_t,
		\end{align}
		where the terminal condition $\bar Y_T^{\alpha^\psi}=\nabla_xg(X_T^{\alpha^\psi})$ and $\alpha_t^{\psi}=u^\psi(t,X_t^{\alpha^\psi})$ for $t\in[0,T]$. Define $\Delta Y_t:=\bar Y_t^{\alpha^\psi}-Y_t^{\alpha^\psi}$ and $\Delta Z_t:=\bar Z_t^{\alpha^\psi}-Z_t^{\alpha^\psi}$ for $t\in[0,T]$. Since $\bar Y_T^{\alpha^\psi}=Y_T^{\alpha^\psi}$, we have $\Delta Y_T=0$. Fix an arbitrary direction $v\in\mathbb R^p$ and set $S_t^v:= \nabla_\psi X_t^{\alpha^\psi}v$ for $t\in[0,T]$. Subtracting \eqref{eq:BSDE} from
		\eqref{eq:closed-loop-adjoint-bsde} and applying It\^o's rule to $\Delta Y_t^\top \nabla_{\psi}X_t^{\alpha^{\psi}}$, we have
		\begin{align}\label{eq:duality-identity}
			0&=\E\left[\Delta Y_T^\top S_T^v-\Delta Y_0^\top S_0^v\right]\nonumber\\
			&=\E\bigg[ \int_0^T(\nabla_\psi u^\psi(t,X_t^{\alpha^\psi})v)^\top
			\left(\nabla_aH(t,X_t^{\alpha^\psi},\alpha^\psi_t,\bar Y_t^{\alpha^\psi},\bar Z_t^{\alpha^\psi})-\nabla_aH(t,X_t^{\alpha^\psi},\alpha^\psi_t,Y_t^{\alpha^\psi},Z_t^{\alpha^\psi})\right)\nonumber\\
			&\qquad\quad-\left( \nabla_xu^\psi(t,X_t^{\alpha^\psi})S_t^v\right)^\top \nabla_aH(t,X_t^{\alpha^\psi},\alpha^\psi_t,Y_t^{\alpha^\psi},Z_t^{\alpha^\psi}) \Big)\d t\bigg].
		\end{align}
		Consequently, one has
		\begin{align}\label{eq:key-duality}
			&\E\left[\int_0^T \left(\nabla_xu^\psi(t,X_t^{\alpha^\psi})S_t^v
			+\nabla_\psi u^\psi(t,X_t^{\alpha^\psi})v\right)^\top \nabla_aH\left(t,X_t^{\alpha^\psi},\alpha^\psi_t,Y_t^{\alpha^\psi},Z_t^{\alpha^\psi})\right)\d t \right]\nonumber\\
			&\qquad=
			\E\left[\int_0^T(\nabla_\psi u^\psi(t,X_t^{\alpha^\psi})v)^\top
			\nabla_aH\left(t,X_t^{\alpha^\psi},\alpha^\psi_t,\bar Y_t^{\alpha^\psi},\bar Z_t^{\alpha^\psi}\right)\d t\right].
		\end{align}
		We then have from Using \eqref{eq:Aa-Ha} and \eqref{eq:key-duality} that
		\begin{align*}
			&v^\top\E\left[\int_0^T\left(\nabla_{\psi}u^{\psi}(t,X_t^{\alpha^{\psi}})+\nabla_xu^{\psi}(t,X_t^{\alpha^{\psi}})\nabla_{\psi}X_t^{\alpha^{\psi}}\right)^{\T}\nabla_aH(t,X_t^{\alpha^{\psi}},\alpha_t^{\psi},Y_t^{\alpha^{\psi}},Z_t^{\alpha^{\psi}})\d t\right]\nonumber\\
			&\quad=\E\left[\int_0^T (\nabla_xu^\psi(t,X_t^{\alpha^\psi})S_t^v
			+\nabla_\psi u^\psi(t,X_t^{\alpha^\psi})v)^\top \nabla_aH(t,X_t^{\alpha^\psi},\alpha^\psi_t,Y_t^{\alpha^\psi},Z_t^{\alpha^\psi}))\d t \right]\nonumber\\
			&\quad=\E\left[ \int_0^T(\nabla_\psi u^\psi(t,X_t^{\alpha^\psi})v)^\top
			\nabla_aH(t,X_t^{\alpha^\psi},\alpha^\psi_t,\bar Y_t^{\alpha^\psi},\bar Z_t^{\alpha^\psi})\d t\right]\nonumber\\
			&\quad=\E\left[ \int_0^T(\nabla_\psi u^\psi(t,X_t^{\alpha^\psi})v)^\top\nabla_a A^\psi(t,X_t^{\alpha^\psi},u^\psi(t,X_t^{\alpha^\psi}))\d t \right].
		\end{align*}
		Since $v\in\mathbb R^p$ is arbitrary, the desired result \eqref{eq:dpg-equivalence-main} follows.
	\end{proof}
	
\section{Proofs of Auxiliary Results}\label{sec:proof_auxi}

This section collects the proofs of auxiliary results in the paper and we will assume $C>0$ is a generic constant only depending on $T$ and $M$ introduced in Assumption~\ref{ass1} and may vary from line to line.
\begin{proof}[Proof of Lemma~\ref{lem:X_moment}]
For any $\psi\in\R^p$, let us define $b^{\psi}(t,x):=b(t,x,u^{\psi}(t,x))$ and $\sigma^{\psi}(t,x):=\sigma(t,x,u^{\psi}(t,x))$ for $(t,x)\in [0,T]\times\R^d$. In light of Assumption~\ref{ass1}, it holds that, for any $x_1,x_2\in\R^d$,
\begin{align}\label{Lipschitz_condition}
\left|(b^{\psi},\sigma^{\psi})(t,x_1)-(b^{\psi},\sigma^{\psi})(t,x_2)\right|&\leq 2M\left(|x_1-x_2|+|u^{\psi}(t,x_1)-u^{\psi}(t,x_2)|\right)\nonumber\\
&\leq 2(M+M_1(t))|x_1-x_2|,\quad \forall t\in[0,T].
\end{align}
This also implies the linear growth of $(b^{\psi},\sigma^{\psi})$ with respect to $x\in\R^d$. By applying It\^{o}'s rule to $|X_t^{\alpha^{\psi}}|^4=(|X_t^{\alpha^{\psi}}|^2)^2$, we obtain
\begin{align*}
|X_t^{\alpha^{\psi}}|^4&=|\xi|^4+4\int_0^t\left|X_s^{\alpha^{\psi}}\right|^2\left((X_s^{\alpha^{\psi}})^{\T}b^{\psi}(s,X_s^{\alpha^{\psi}})+|\sigma^{\psi}(s,X_s^{\alpha^{\psi}})|^2\right)\d s\\
&\quad+4\int_0^t|X_s^{\alpha^{\psi}}|^2(X_s^{\alpha^{\psi}})^{\T}\sigma^{\psi}(s,X_s^{\alpha^{\psi}})\d W_s.
\end{align*} 
Applying the Young's ineqaulity and BDG's inequality, we have, for any $t\in[0,T]$,
\begin{align}\label{moment_2}
\E\left[\sup_{s\in [0,t]}\left|X_s^{\alpha^{\psi}}\right|^4\right]\leq \E\left[\left|\xi\right|^4\right]+\E\left[\int_0^tC(s)\left|X_s^{\alpha^{\psi}}\right|^4\d s\right],
\end{align}
where, the positive function $t\to C(t)$ only depends on $M$ and satisfies $\int_0^TC(t)\d t<\infty$ (we have used the square-integrability of $t\to M_1(t)$). By using the Gronwall's lemma, the estimate \eqref{moment_2} yields that
\begin{align*}
\E\left[\sup_{t\in [0,T]}\left|X_t^{\alpha^{\psi}}\right|^4\right]\leq \E\left[|\xi|^4\right]\exp\left(\int_0^TC(t)\d t\right)\leq C\left(1+\E\left[|\xi|^4\right]\right).
\end{align*}
Thus, the proof is hence complete.
\end{proof}
	
\begin{proof}[Proof of Lemma~\ref{lem:decoupling_field}]
For any $(t,x)\in [0,T]\times\R^d$, let the triplet $(X^{t,x,\alpha^{\psi}},Y^{t,x,\alpha^{\psi}},Z^{t,x,\alpha^{\psi}})=$\\ $(X^{t,x,\alpha^{\psi}}_s,Y_s^{t,x,\alpha^{\psi}},Z_s^{t,x,\alpha^{\psi}})_{s\in [t,T]}$ be the solution to the (decoupled) FBSDE starting from $(t,x)$:
{\small\begin{align}\label{FBSDE_tx}
\begin{cases}
\displaystyle X_s^{t,x,\alpha^{\psi}}=x+\int_t^sb(\lambda,X_\lambda^{t,x,\alpha^{\psi}},u^{\psi}(\lambda,X_\lambda^{t,x,\alpha^{\psi}}))\d \lambda+\int_t^s\sigma(\lambda,X_\lambda^{t,x,\alpha^{\psi}},u^{\psi}(\lambda,X_\lambda^{t,x,\alpha^{\psi}}))\d W_\lambda,\\[1em]
\displaystyle Y_s^{t,x,\alpha^{\psi}}=\nabla_xg(X_T^{t,x,\alpha^{\psi}})+\int_s^T\nabla_xH(\lambda,X_\lambda^{t,x,\alpha^{\psi}},u^{\psi}(\lambda,X_\lambda^{t,x,\alpha^{\psi}}),Y_\lambda^{t,x,\alpha^{\psi}},Z_\lambda^{t,x,\alpha^{\psi}})\d \lambda\\[1em]
\displaystyle\qquad\qquad-\int_s^TZ_\lambda^{t,x,\alpha^{\psi}}\d W_\lambda.
\end{cases} 
\end{align}}By using Assumption~\ref{ass1} and Theorem 6.3.3 of Pham \cite{Pham}, there exists a jointly continuous (deterministic) mapping $v^{\psi}:[0,T]\times\R^d\to \R^l$ such that $Y_t^{t,x,\alpha^{\psi}}=v^{\psi}(t,x)$ for all $(t,x)\in[0,T]\times\R^d$.
		
We next verify that $v^{\psi}(t,x)$ is continuously differentiable with respect to the space variable $x\in\R^d$ and satisfies the estimate \eqref{gradient_bound}. To start with, we first define the gradient process $\nabla_x X^{t,x,\alpha^{\psi}}=(\nabla_xX_s^{t,x,\alpha^{\psi}})_{s\in [t,T]}$ by
		\begin{align*}
			\nabla_xX_s^{t,x,\alpha^{\psi}}&=I_d+\int_t^s\nabla_xb^{\psi}(\lambda,X_\lambda^{t,x,\alpha^{\psi}})\nabla_xX_\lambda^{t,x,\alpha^{\psi}}\d\lambda+\int_t^s\nabla_x\sigma^{\psi}(\lambda,X_\lambda^{t,x,\alpha^{\psi}})\nabla_xX_\lambda^{t,x,\alpha^{\psi}}\d W_\lambda,
		\end{align*}
where, $I_d$ denotes the identity matrix in $\R^{d\times d}$ and $(b^{\psi},\sigma^{\psi})$ is defined in the proof of Lemma~\ref{lem:X_moment}. Following the standard moment estimation procedure (or just following the proof of Lemma~\ref{lem:X_gradient_approx}), we can deduce that
\begin{align}\label{X_state_gradient}
\lim_{\epsilon\to 0}~\sum_{i=1}^d~\E\left[\sup_{s\in [t,T]}\left|\frac{X_s^{t,x+\epsilon\boldsymbol{e}_i,\alpha^{\psi}}-X_s^{t,x,\alpha^{\psi}}}{\epsilon}-\nabla_xX_s^{t,x,\alpha^{\psi}}\boldsymbol{e}_i\right|^2\right]=0,
\end{align}
where, for $i=1,\ldots,d$, $\boldsymbol{e}_i$ denotes the $i$-th canonical basis vector in $\R^d$. To ease the notation, we set, for $(t,x,y,z)\in [0,T]\times\R^d\times\R^d\times\R^{d\times r}$,
		\begin{align*}
			F^{\psi}(t,x,y,z):=\nabla_xb(t,x,u^{\psi}(t,x))^{\T}y+\nabla_x\sigma(t,x,u^{\psi}(t,x))^{\T}z+\nabla_xf(t,x,u^{\psi}(t,x)).
		\end{align*}
		Here, we adopt the notation given by
		\begin{align*}
			\left(\nabla_x\sigma(t,x,u^{\psi}(t,x))^{\T}z\right)_i=\tr\left(\pa_{x_i}\sigma(t,x,u^{\psi}(t,x))^{\T}z\right),\quad \forall i=1\dots,d.
		\end{align*} 
		We then define the gradient BSDE by, for any $s\in[t,T]$,
		\begin{align}\label{gradient_BSDE}
			\nabla_x Y^{t,x,\alpha^{\psi}}_s&=\nabla_{x}^2g(X_T^{t,x,\alpha^{\psi}})\nabla_xX_T^{t,x,\alpha^{\psi}}-\int_s^T\nabla_x Z_{\lambda}^{t,x,\alpha^{\psi}}\d W_\lambda\nonumber\\
			&\quad+\int_s^T\nabla_x F^{\psi}\left(\lambda,X_\lambda^{t,x,\alpha^{\psi}},Y_\lambda^{t,x,\alpha^{\psi}},Z_\lambda^{t,x,\alpha^{\psi}}\right)\nabla_x X_\lambda^{t,x,\alpha^{\psi}}\d\lambda\nonumber\\
			&\quad+\int_s^T\nabla_xb\left(\lambda,X_\lambda^{t,x,\alpha^{\psi}},u^{\psi}(\lambda,X_\lambda^{t,x,\alpha^{\psi}})\right)^{\T}\nabla_x Y_\lambda^{t,x,\alpha^{\psi}}\d\lambda\nonumber\\	&\quad+\int_s^T\nabla_x\sigma\left(\lambda,X_\lambda^{t,x,\alpha^{\psi}},u^{\psi}(\lambda,X_\lambda^{t,x,\alpha^{\psi}})\right)^{\T}\nabla_x Z_\lambda^{t,x,\alpha^{\psi}}\d\lambda.
		\end{align}
		Applying It\^{o}'s formula to $\left|\frac{Y_s^{t,x+\epsilon\boldsymbol{e}_i,\alpha^{\psi}}
			-Y_s^{t,x,\alpha^{\psi}}}{\epsilon}-\nabla_xY_s^{t,x,\alpha^{\psi}}\boldsymbol{e}_i\right|^2$ for $i=1,\ldots,d$, and then using BDG's inequality and Gronwall's lemma in a similar manner as in the proof of \eqref{X_state_gradient}, we obtain that
		\begin{align*}
			\lim_{\epsilon\to 0}~\sum_{i=1}^d~&\E\left[\sup_{s\in [t,T]}\left|
			\frac{Y_s^{t,x+\epsilon\boldsymbol{e}_i,\alpha^{\psi}}
				-Y_s^{t,x,\alpha^{\psi}}}{\epsilon}-\nabla_xY_s^{t,x,\alpha^{\psi}}\boldsymbol{e}_i\right|^2\right.\nonumber\\	&\qquad+\left.\int_t^T\left|\frac{Z_s^{t,x+\epsilon\boldsymbol{e}_i,\alpha^{\psi}}-Z_s^{t,x,\alpha^{\psi}}}{\epsilon}
			-\nabla_xZ_s^{t,x,\alpha^{\psi}}\boldsymbol{e}_i\right|^2\d s\right]=0.
		\end{align*}
		In particular, it follows that
		\begin{align*}
			0&=\lim_{\epsilon\to 0}~\sum_{i=1}^d~\E\left[	\left|
			\frac{Y_t^{t,x+\epsilon\boldsymbol{e}_i,\alpha^{\psi}}-Y_t^{t,x,\alpha^{\psi}}}{\epsilon}-\nabla_xY_t^{t,x,\alpha^{\psi}}\boldsymbol{e}_i\right|^2\right]\\
			&=\lim_{\epsilon\to 0}~\sum_{i=1}^d~ \left|\frac{v^{\psi}(t,x+\epsilon\boldsymbol{e}_i)-v^{\psi}(t,x)}{\epsilon}-\nabla_x Y_t^{t,x,\alpha^{\psi}}\boldsymbol{e}_i\right|^2.
		\end{align*}
		This implies that $\nabla_x Y^{t,x,\alpha^{\psi}}$ is deterministic and it further holds that $\nabla_xv^{\psi}(t,x)=\nabla_x Y^{t,x,\alpha^{\psi}}_t$ for any $(t,x)\in[0,T]\times\R^d$. Standard moment estimates yield that $\nabla_x Y^{t,x,\alpha^{\psi}}$ is jointly continuous in $(t,x)\in [0,T]\times \R^d$ and hence $v^{\psi}(t,x)$ is continuously differentiable with respect to the space varaible $x\in\R^d$. Finally, it remains to verify that $\nabla_xY^{t,x,\alpha^{\psi}}_t$ is uniformly bounded. By applying It\^{o}'s formula to $|\nabla_x X_s^{t,x,\alpha^{\psi}}|^2$ and utilizing \eqref{Lipschitz_condition}, we can deduce that
\begin{align*}
\E\left[\sup_{s\in [t,T]}\left|X_s^{t,x,\alpha^{\psi}}\right|^2\right]\leq M.
\end{align*}
We then apply It\^{o}'s formula to $|\nabla Y_s^{t,x,\alpha^{\psi}}|^2$ to derive that
\begin{align}\label{gradient_norm}
\left|\nabla_xY_s^{t,x,\alpha^\psi}\right|^2&+\int_s^T
\left|\nabla_xZ_\lambda^{t,x,\alpha^\psi}\right|^2\d\lambda=\left|\nabla_x^2g\left(X_T^{t,x,\alpha^\psi}\right)
			\nabla_xX_T^{t,x,\alpha^\psi}\right|^2\nonumber\\
			&\quad+2\int_s^T\tr\left(
			\left(\nabla_xY_\lambda^{t,x,\alpha^\psi}\right)^{\T}
			\nabla_xF^\psi
			\left(\lambda,X_\lambda^{t,x,\alpha^\psi},
			Y_\lambda^{t,x,\alpha^\psi},
			Z_\lambda^{t,x,\alpha^\psi}\right)
			\nabla_xX_\lambda^{t,x,\alpha^\psi}\right)\d\lambda\nonumber\\
			&\quad+2\int_s^T\tr\left(\left(\nabla_xY_\lambda^{t,x,\alpha^\psi}\right)^{\T}
			\nabla_xb
			\left(\lambda,X_\lambda^{t,x,\alpha^\psi},
			u^\psi(\lambda,X_\lambda^{t,x,\alpha^\psi})\right)^{\T}\nabla_xY_\lambda^{t,x,\alpha^\psi}\right)\d\lambda\nonumber\\
			&\quad+2\int_s^T\tr\left(
			\left(\nabla_xY_\lambda^{t,x,\alpha^\psi}\right)^{\T}\nabla_x\sigma\left(\lambda,X_\lambda^{t,x,\alpha^\psi},u^\psi(\lambda,X_\lambda^{t,x,\alpha^\psi})\right)^{\T}\nabla_xZ_\lambda^{t,x,\alpha^\psi}\right)\d\lambda\nonumber\\
			&\quad-2\int_s^T\tr\left(
			\left(\nabla_xY_\lambda^{t,x,\alpha^\psi}\right)^{\T}
			\nabla_xZ_\lambda^{t,x,\alpha^\psi}\d W_\lambda\right).
		\end{align}
By using Assumption~\ref{ass1}, we have, for any $(t,x,y,z)\in [0,T]\times\R^d\times\R^d\times\R^{d\times r}$, 
\begin{align*}
\left|\nabla_x F^{\psi}(t,x,y,z)\right|&\leq \left(\nabla_{x}^2b(t,x,u^{\psi}(t,x))+\nabla_{xa}^2b(t,x,u^{\psi}(t,x))\nabla_xu^{\psi}(t,x)\right)^{\T}y\\
&\quad+\left(\nabla_{x}^2\sigma(t,x,u^{\psi}(t,x))+\nabla_{xa}^2\sigma(t,x,u^{\psi}(t,x))\nabla_xu^{\psi}(t,x)\right)^{\T}z\\
&\quad+\left(\nabla_{x}^2f(t,x,u^{\psi}(t,x))+\nabla_{xa}^2f(t,x,u^{\psi}(t,x))\nabla_xu^{\psi}(t,x)\right)\\
&\leq C(t)\left(1+|y|+|z|\right),
\end{align*}
where, $t\to C(t)>0$ satisfies $C:=\int_0^TC(t)^2\d t<\infty$ which depends on $T$ and $M$ introduced in  Assumption~\ref{ass1}. Hence,  by taking expectations on both sides of \eqref{gradient_norm}, we conclude from Gronwall's lemma that
\begin{align*}
\E\left[\left|\nabla_xY_t^{t,x,\alpha^\psi}\right|^2+\int_t^T
\left|\nabla_xZ_\lambda^{t,x,\alpha^\psi}\right|^2\d\lambda\right]\leq C.
\end{align*}
Thus, we have that $\nabla_x v^{\psi}(t,x)$ is uniformly bounded. By the pathwise uniqueness of the forward SDE and the BSDE, we can conclude that $Y_t^{\alpha^{\psi}}=v^{\psi}(t,X_t^{\alpha^{\psi}})$ for $t\in[0,T]$,  ${\tt m}\otimes \Pb$-a.s.. Furthermore, the characteriztion for $Z^{\alpha^{\psi}}=(Z_t^{\alpha^{\psi}})_{t\in [0,T]}$ given by $Z_t^{\alpha^{\psi}}=\nabla_xv^{\psi}(t,X_t^{\alpha^{\psi}})\sigma(t,X_t^{\alpha^{\psi}},u^{\psi}(t,X_t^{\alpha^{\psi}}))$ for $t\in[0,T]$, ${\tt m}\otimes \Pb$-a.s. follows from Lemma 4.11 in Carmona and Delarue \cite{Carmona}. So far, we finished the proof.
\end{proof}
	
\begin{proof}[Proof of Lemma~\ref{lem:X_gradient_approx}]
Recall the mapping $(b^{\psi},\sigma^{\psi})$ defined in the proof of Lemma~\ref{lem:X_moment}. Then, for any $t\in[0,T],
$\begin{align*}
X_t^{\alpha^{\psi+\delta\boldsymbol{v}_m}}-X_t^{\alpha^{\psi}}&=\int_0^t\left(b^{\psi+\delta\boldsymbol{v}_m}(s,X_s^{\alpha^{\psi+\delta\boldsymbol{v}_m}})-b^{\psi}(s,X_s^{\alpha^{\psi}})\right)\d s\\
&\quad+\int_0^t\left(\sigma^{\psi+\delta\boldsymbol{v}_m}(s,X_s^{\alpha^{\psi+\delta\boldsymbol{v}_m}})-\sigma^{\psi}(s,X_s^{\alpha^{\psi}})\right)\d W_s.
\end{align*}
It follows from Assumption~\ref{ass1} that, for any $\psi^1,\psi^2\in\R^p$,
\begin{align}\label{psi_Lipschitz}
\left|(b^{\psi^1},\sigma^{\psi^2})(t,x)-(b^{\psi^2},\sigma^{\psi^2})(t,x)\right|\leq 2M|u^{\psi^1}(t,x)-u^{\psi^2}(t,x)|\leq 2M^2(1+|x|)|\psi^1-\psi^2|.
\end{align}
As a consequence, by applying It\^{o}'s formula to $|	X_t^{\alpha^{\psi+\delta\boldsymbol{v}_m}}-X_t^{\alpha^{\psi}}|^4$ and using Burkholder-Davis-Gundy inequality, Gronwall's lemma and Lemma~\ref{lem:X_moment}, there exists a constant $C>0$ depending on $T$ and $M$ introduced in Assumption \ref{ass1} such that
\begin{align}\label{X_bound}
\E\left[\sup_{t\in [0,T]}	\left|X_t^{\alpha^{\psi+\delta\boldsymbol{v}_m}}-X_t^{\alpha^{\psi}}\right|^4\right]\leq C\delta^4.
\end{align}
This yields that, for any $m=1,\dots,p$,
\begin{align}\label{Delta_bound}
\sup_{\delta>0}\E\left[\sup_{t\in [0,T]}\left|\Delta_t^{\psi,\delta,m}\right|^4\right]\leq C.
\end{align}
For any $m=1,\dots,p$, to ease the notation, we write $(b^{\psi+\delta\boldsymbol{v}_m}(t,X_t^{\alpha^{\psi+\delta\boldsymbol{v}_m}}),\sigma^{\psi+\delta\boldsymbol{v}_m}(t,X_t^{\alpha^{\psi+\delta\boldsymbol{v}_m}})$ $b^{\psi}(t,X_t^{\alpha^{\psi}}),\sigma^{\psi}(t,X_t^{\alpha^{\psi}}),\nabla_xb^{\psi}(t,X_t^{\alpha^{\psi}}),\nabla_x\sigma^{\psi}(t,X_t^{\alpha^{\psi}}),\nabla_{\psi}b^{\psi}(t,X_t^{\alpha^{\psi}}),\nabla_{\psi}\sigma^{\psi}(t,X_t^{\alpha^{\psi}}))$ as $(b_t^{\psi+\delta\boldsymbol{v}_m},\\ \sigma_t^{\psi+\delta\boldsymbol{v}_m},b_t^{\psi},\sigma_t^{\psi},\nabla_xb_t^{\psi},\nabla_x\sigma_t^{\psi},\nabla_{\psi}b_t^{\psi},\nabla_{\psi}\sigma_t^{\psi})$ for short, and it then follows that
\begin{align}\label{X_dif}
&\left|\Delta_t^{\psi,\delta,m}-\pa_{\psi_m}X_t^{\alpha^{\psi}}\right|^2=2\int_0^t\left(\Delta_s^{\psi,\delta,m}-\pa_{\psi_m}X_s^{\alpha^{\psi}}\right)^{\T}\nabla_xb^{\psi}_s\left(\Delta_s^{\psi,\delta,m}-\pa_{\psi_m}X_s^{\alpha^{\psi}}\right)\d s\nonumber\\
&\qquad+2\int_0^t\left(\Delta_s^{\psi,\delta,m}-\pa_{\psi_m}X_s^{\alpha^{\psi}}\right)^{\T}\left(\frac{b^{\psi+\delta\boldsymbol{v}_m}_s-b_s^{\psi}}{\delta}-\nabla_xb^{\psi}_s\Delta_s^{\psi,\delta,m}-\nabla_{\psi}b^{\psi}_s\right)\d s\nonumber\\
&\qquad+\int_0^t\left|\frac{\sigma_s^{\psi+\delta\boldsymbol{v}_m}-\sigma_s^{\psi}}{\delta}-\nabla_x\sigma_s^{\psi}\pa_{\psi_m}X_s^{\alpha^{\psi}}-\nabla_{\psi}\sigma_s^{\psi}\right|^2\d s\nonumber\\
&\qquad+2\int_0^t\left(\Delta_s^{\psi,\delta,m}-\pa_{\psi_m}X_s^{\alpha^{\psi}}\right)^{\T}\nabla_x\sigma_s^{\psi}\left(\Delta_s^{\psi,\delta,m}-\pa_{\psi_m}X_s^{\alpha^{\psi}}\right)\d W_s\nonumber\\
&\qquad+2\int_0^t\left(\Delta_s^{\psi,\delta,m}-\pa_{\psi_m}X_s^{\alpha^{\psi}}\right)^{\T}\left(\frac{\sigma^{\psi+\delta\boldsymbol{v}_m}_s-\sigma_s^{\psi}}{\delta}-\nabla_x\sigma^{\psi}_s\Delta_s^{\psi,\delta,m}-\nabla_{\psi}\sigma^{\psi}_s\right)\d W_s\nonumber\\
&\quad=:I_1(t)+I_2(t)+I_3(t)+I_4(t)+I_5(t).
\end{align}

In the sequel, let $t\to C(t)$ be a deterministic (positive) function satisfying $C:=\int_0^T C(t)^2\d t<\infty$ which depends only on $T$ and $M$ introduced in Assumption \ref{ass1}. The function $t\to C(t)$ may vary from line to line. By the Lipschitz condition established in \eqref{Lipschitz_condition}, one has $\sup_{t\in [0,t]}I_1(s)\leq 2\int_0^tC(s)|\Delta_s^{\psi,\delta,m}-\pa_{\psi_m}X_s^{\alpha^{\psi}}|^2\d s$ for $t\in[0,T]$. Since $b^{\psi}(t,x)$ is twice continuously differentiable in $(\psi,x)\in\R^p\times\R^d$, it follows from  Cauchy-Schwartz inequality that, for any $t\in[0,T]$,
\begin{align*}
\sup_{s\in [0,t]}I_2(t)&\leq 2\sup_{s\in [0,t]}\left|\Delta_s^{\psi,\delta,m}-\pa_{\psi_m}X_s^{\alpha^{\psi}}\right|\int_0^t\left|\frac{b^{\psi+\delta\boldsymbol{v}_m}_s-b_s^{\psi}}{\delta}-\nabla_xb^{\psi}_s\Delta_s^{\psi,\delta,m}-\nabla_{\psi}b^{\psi}_s\right|\d s\\
&\leq\epsilon\sup_{s\in [0,t]}\left|\Delta_s^{\psi,\delta,m}-\pa_{\psi_m}X_s^{\alpha^{\psi}}\right|^2+\frac{\delta^2}{\epsilon}\int_0^tC(s)\left(1+|\Delta_s^{\psi,\delta,m}|^4\right)\d s.
\end{align*}
Here, $\epsilon>0$ is a constant, and in the 2nd inequality, we applied the Taylor's expansion to $b^{\psi}(t,x)$ and utilized the boundedness of its second-order derivatives. For the term $I_3$, we have similarly from Cauchy-Schwartz inequality that, for $t\in[0,T]$,
\begin{align*}
\sup_{s\in [0,t]}I_3(t)\leq 2\int_0^tC(s)\sup_{\lambda\in [0,s]}\left|\Delta_\lambda^{\psi,\delta,m}-\pa_{\psi_m}X_\lambda^{\alpha^{\psi}}\right|^2\d s +2\delta^2\int_0^tC(s)(1+|\Delta_s^{\psi,\delta,m}|^4)\d s.
\end{align*}
By the BDG's inequality and Cauchy-Schwartz inequality, for $t\in[0,T]$,
\begin{align*}
\mathbb E\left[\sup_{s\in[0,t]}I_4(s)\right] &C\leq \mathbb E\left[\left(\int_0^t\left|\Delta_s^{\psi,\delta,m}-\pa_{\psi_m}X_s^{\alpha^{\psi}}\right|^4|\nabla_x\sigma_s^\psi|^2\d s\right)^{1/2}\right]\\
&\leq C\E\left[\left(\sup_{s\in [0,t]}	\left|\Delta_s^{\psi,\delta,m}-\pa_{\psi_m}X_s^{\alpha^{\psi}}\right|\int_0^t
\left|\Delta_s^{\psi,\delta,m}-\pa_{\psi_m}X_s^{\alpha^{\psi}}\right|^2|\nabla_x\sigma_s^\psi|^2\d s\right)^{\frac12}\right]\\
&\leq\epsilon\E\left[\sup_{s\in [0,t]}	\left|\Delta_s^{\psi,\delta,m}-\pa_{\psi_m}X_s^{\alpha^{\psi}}\right|^2\right]+\frac{C^2}{4\epsilon}\E\left[\int_0^t\left|\Delta_s^{\psi,\delta,m}-\pa_{\psi_m}X_s^{\alpha^{\psi}}\right|^2|\nabla_x\sigma_s^\psi|^2\d s\right]\\
&\leq \epsilon\E\left[\sup_{s\in [0,t]}	\left|\Delta_s^{\psi,\delta,m}-\pa_{\psi_m}X_s^{\alpha^{\psi}}\right|^2\right]+\frac{1}{4\epsilon}\E\left[
\int_0^t C(s)\sup_{\lambda\in [0,s]}\left|\Delta_\lambda^{\psi,\delta,m}-\pa_{\psi_m}X_\lambda^{\alpha^{\psi}}\right|^2\d s\right].
\end{align*}
Again by applying the BDG's inequality, Cauchy-Schwartz inequality  and Taylor's expansion, it follows that, for $t\in[0,T]$, 
\begin{align*}
&\mathbb E\left[\sup_{s\in[0,t]}I_5(s)\right]\nonumber\\
&\qquad\leq C\mathbb E\left[
\sup_{s\in[0,t]}\left|\Delta_s^{\psi,\delta,m}-\pa_{\psi_m}X_s^{\alpha^{\psi}}\right|\left(\int_0^t\left|\frac{\sigma^{\psi+\delta\boldsymbol{v}_m}_s-\sigma_s^{\psi}}{\delta}-\nabla_x\sigma^{\psi}_s\Delta_s^{\psi,\delta,m}-\nabla_{\psi}\sigma^{\psi}_s\right|^2\d s\right)^{\frac12}\right]\\
&\qquad\leq\epsilon\mathbb E\left[\sup_{s\in[0,t]}
\left|\Delta_s^{\psi,\delta,m}-\pa_{\psi_m}X_s^{\alpha^{\psi}}\right|^2\right]+\frac{\delta^2}{4\epsilon}\mathbb E\left[\int_0^TC(s)\left(1+|\Delta_s^{\psi,\delta,m}|^4\right)\d s\right].
\end{align*}
Combing the above estimations altogether and choosing $\epsilon=\frac14$, we can finally conclude from Gronwall's lemma that
\begin{align*}
\E\left[\sup_{t\in [0,T]}\left|\Delta_t^{\psi,\delta,m}-\pa_{\psi_m}X_t^{\alpha^{\psi}}\right|^2\right]\leq C\delta^2.
\end{align*} 
Since $m\in\{1,\dots,p\}$ is arbitrary, the proof is hence complete.
\end{proof}
	
\begin{proof}[Proof of Lemma~\ref{lem:auxi_lemma}]
We argue by contradiction. Suppose that $a_n$ does not converge to $0$ as $n\to\infty$. Then, there exists some $\epsilon>0$ such that $\limsup_{n\to\infty}a_n>\epsilon$. As a result, there exist infinitely many indices $n$ such that $a_n\geq\epsilon$. Since $|a_{n+1}-a_n|\leq Cr_n$, we have, for any $m>n$,
\begin{align*}
|a_m-a_n|&\leq \sum_{i=n}^{m-1}|a_{i+1}-a_i|\leq C\sum_{i=n}^{m-1}r_i.
\end{align*}
We now construct inductively a sequence of indices $\{n_k\}_{k\geq1}$ such that $a_{n_k}\geq\epsilon$ and the corresponding intervals $[n_k,m_k)$ are
disjoint. Given $n_k$, let $m_k>n_k$ be the smallest integer such that
\begin{align*}
\sum_{i=n_k}^{m_k-1}r_i\geq \frac{\epsilon}{2C},
\end{align*}
where, $m_k$ exists since $\sum_{n=1}^{\infty}r_n=\infty$. For every $i\in\{n_k,\ldots,m_k-1\}$, by the minimality of $m_k$, we deduce that
\begin{align*}
\sum_{j=n_k}^{i-1}r_j<\frac{\epsilon}{2C},
\end{align*}
which yields that
\begin{align*}
a_i\geq a_{n_k}-\sum_{j=n_k}^{i-1}|a_{j+1}-a_j|\geq \epsilon-C\sum_{j=n_k}^{i-1}r_j>\frac{\epsilon}{2}.
\end{align*}
Consequently, we have
\begin{align}\label{lower_bound_sum}
\sum_{i=n_k}^{m_k-1}r_i a_i^2\geq\frac{\epsilon^2}{4}\sum_{i=n_k}^{m_k-1}r_i\geq\frac{\epsilon^3}{8C}.
\end{align}
Since there are infinitely many indices for which $a_n\geq\epsilon$, we
can choose $n_{k+1}>m_k$ such that $a_{n_{k+1}}\geq\epsilon$. Thus, the intervals $[n_k,m_k)$ for $k\in\mathbb{N}$ are pairwisely disjoint. It follows from \eqref{lower_bound_sum} that
\begin{align*}
\sum_{n=1}^{\infty}r_na_n^2\geq\sum_{k=1}^{\infty}\sum_{i=n_k}^{m_k-1}r_i a_i^2\geq\sum_{k=1}^{\infty}\frac{\epsilon^3}{8C}=\infty,
\end{align*}
which contradicts the assumption that $\sum_{n=1}^{\infty}r_na_n^2<\infty$. The proof is hence complete.
	\end{proof}
	
\begin{proof}[Proof of Lemma~\ref{lem:Hastable}]
Let $\alpha,\beta\in\A[0,T]$ be fixed. By applying It\^{o}'s formula to $|X^{\alpha}_t-X^{\beta}|^2$, we have, for $t\in[0,T]$,
\begin{align*}
\left|X_t^{\alpha}-X_t^{\beta}\right|^2&=2\int_0^t (X_s^{\alpha}-X_s^{\beta})^{\T}\left(b(s,X_s^{\alpha},\alpha_s)-b(s,X_s^{\beta},\beta_s)\right)\d s\\
&\quad+\int_0^t\left|\sigma(s,X_s^{\alpha},\alpha_s)-\sigma(s,X_s^{\beta},\beta_s)\right|^2\d s\\
&\quad+2\int_0^t (X_s^{\alpha}-X_s^{\beta})^{\T}\left(\sigma(s,X_s^{\alpha},\alpha_s)-\sigma(s,X_s^{\beta},\beta_s)\right)\d W_s.
\end{align*}
Taking supremum and expectations on both sides of the above display, we have from BDG's inequality and Assumption~\ref{ass1} that
\begin{align*}
\E\left[\sup_{s\in [0,t]}\left|X_s^{\alpha}-X_s^{\beta}\right|^2\right]&\leq C\E\left[\int_0^t|X_s^{\alpha}-X_s^{\beta}|^2\d s+\int_0^t|\alpha_s-\beta_s|^2\d s\right]\\
&\quad+2\left(\int_0^t|X_s^{\alpha}-X_s^{\beta}|^2\left|\sigma(s,X_s^{\alpha},\alpha_s)-\sigma(s,X_s^{\beta},\beta_s)\right|^2\d s\right)^{\frac12}\\
&\leq C\E\left[\int_0^t|X_s^{\alpha}-X_s^{\beta}|^2\d s+\int_0^t|\alpha_s-\beta_s|^2\d s\right]\\
&\quad+\epsilon	\E\left[\sup_{s\in [0,T]}|X_s^{\alpha}-X_s^{\beta}|^2\right]+\frac{C}{\epsilon}\E\left[\int_0^t|X_s^{\alpha}-X_s^{\beta}|^2\d s+\int_0^t|\alpha_s-\beta_s|^2\d s\right],
\end{align*}
where, $\epsilon\in (0,1)$ is arbitrary and we used Cauchy-Schwartz inequality in the last inequality. It then follows from Gronwall's lemma that
\begin{align}\label{X_Lipschitz}
\E\left[\sup_{t\in [0,T]}\left|X_t^{\alpha}-X_t^{\beta}\right|^2\right]\leq C\E\left[\int_0^T|\alpha_t-\beta_t|^2\d t\right].
\end{align}
By applying It\^{o}'s rule to $|Y_t^{\alpha}|^2$, we obtain, for any $t\in[0,T]$,
\begin{align*}
\left|Y_t^{\alpha}\right|^2+\int_t^T\left|Z_s^{\alpha}\right|^2\d s&=|\nabla_xg(X_T^{\alpha})|^2+2\int_t^T(Y_s^{\alpha})^{\T}\nabla_x H(s,X_s^{\alpha},\alpha_s,Y_s^{\alpha},Z_s^{\alpha})\d s\nonumber\\
&\quad+2\int_t^T(Y_s^{\alpha})^{\T}Z_s^{\alpha}\d W_s.
\end{align*}
It follows from Assumption~\ref{ass1} that $|\nabla_xH(t,x,a,y,z)|\leq M(1+|x|+|a|+|y|+|z|)$ for any $(t,x,a,y,z)\in [0,T]\times\R^d\times\R^l\times\R^d\times\R^{d\times r}$. This yields from BDG's inequality, Cauchy-Schwartz inequality and Gronwall's lemma that
\begin{align}\label{Y_bound}
\E\left[\sup_{t\in [0,T]}|Y_t^{\alpha}|^2+\int_0^T|Z_t^{\alpha}|^2\d t\right]\leq C\E\left[\left(1+\int_0^T|\alpha_t|^2\d t\right)\right].
\end{align}
By applying It\^{o}'s rule to $|Y_t^{\alpha}-Y_t^{\beta}|^2$ again, we deduce that
\begin{align*}
|Y_t^{\alpha}-Y_t^{\beta}|^2&=2\int_t^T(Y_s^{\alpha}-Y_s^{\beta})^{\T}\left(\nabla_x H(s,X_s^{\alpha},\alpha_s,Y_s^{\alpha},Z_s^{\alpha})-\nabla_x H(s,X_s^{\beta},\beta_s,Y_s^{\beta},Z_s^{\beta})\right)\d s\\
&\quad-\int_t^T|Z_s^{\alpha}-Z_s^{\beta}|^2\d s+|\nabla_xg(X_T^{\alpha})-\nabla_xg(X_T^{\beta})|^2+2\int_t^T(Y_s^{\alpha}-Y_s^{\beta})^{\T}(Z_s^{\alpha}-Z_s^{\beta})\d W_s.
\end{align*}
By using \eqref{Y_bound} and the local Lipschitz continuity of $(x,a,y,z)\to\nabla_xH(t,x,a,y,z)$ for every $t\in [0,T]$, we conclude from BDG's inequality, Cauchy-Schwartz inequality and Gronwall's lemma that
\begin{align*}
\E\left[\sup_{t\in [0,T]}|Y_t^{\alpha}-Y_t^{\beta}|^2+\int_0^T|Z_t^{\alpha}-Z_t^{\beta}|^2\d t\right]\leq C_{\|\alpha\|_{L^2}\vee \|\beta\|_{L^2}}\E\left[\int_0^T|\alpha_t-\beta_t|^2\d t\right],
\end{align*}
where, $C_{\|\alpha\|_{L^2}^2\vee \|\beta\|_{L^2}^2}>0$ is a constant depending on $T$, $M$ introduced in Assumption \ref{ass1} and $\|\alpha\|_{L^2}^2\vee\|\beta\|_{L^2}^2$. Thanks to Lemma~\ref{lem:X_moment} and Assumption~\ref{ass1}, we have
\begin{align*}
\sup_{\psi\in\R^p}\E\left[\int_0^T|\alpha_t^{\psi}|^2\d t\right]<\infty.
\end{align*}
This implies that, there exists a constant $C>0$ only depending on $T$ and $M$ introduced in Assumption \ref{ass1} such that
\begin{align}\label{DJ_Lipschitz}
\E\left[\sup_{t\in [0,T]}\left|Y_t^{\alpha^{\psi^1}}-Y_t^{\alpha^{\psi^2}}\right|^2+\int_0^T\left|Z_t^{\alpha^{\psi^1}}-Z_t^{\alpha^{\psi^2}}\right|^2\d t\right]\leq C\E\left[\int_0^T\left|\alpha_t^{\psi^1}-\alpha_t^{\psi^2}\right|^2\d t\right].
\end{align} 
The desired result thus follows from \eqref{DJ_Lipschitz} by utilizing Assumption~\ref{ass1}.
\end{proof}
	
\begin{proof}[Proof of Lemma~\ref{lem:gradientstable}] To simplify the notations, for $i=1,2$, we set $G_t^i:=\nabla_{\psi}X_t^{\alpha^{\psi^i}}$, $U_t^i:=\nabla_{\psi}u^{\psi^i}(t,X_t^{\alpha^{\psi^i}})$, $C_t^i:=\nabla_ab(t,X_t^{\alpha^{\psi^i}},\alpha_t^{\psi^i})U_t^i$ and $D_t^i:=\nabla_a\sigma(t,X_t^{\alpha^{\psi^i}},\alpha_t^{\psi^i})U_t^i$ for $t\in[0,T]$. Then, SDE \eqref{X_gradient} can be written as:
\begin{align}\label{gradient_SDE_compact}
dG_t^i=\left(\nabla_xb^{\psi^i}(t,X_t^{\alpha^{\psi^i}})G_t^i+C_t^i\right)\d t+\left(\nabla_x\sigma^{\psi^i}(t,X_t^{\alpha^{\psi^i}})G_t^i+D_t^i\right)\d W_t,\quad G_0^i=\boldsymbol{0}_{d\times p}.
\end{align}
Using Assumption~\ref{ass1}, there exists a deterministic function $t\to C(t)>0$ satisfying $C:=\int_0^TC(t)^2<\infty$ which only depends on $T$ and $M$ introduced in Assumption \ref{ass1} such that, for any $t\in[0,T]$,
\begin{align}\label{gradient_coefficients_bound}
\left|\nabla_xb^{\psi^i}(t,X_t^{\alpha^{\psi^i}})\right|+\left|\nabla_x\sigma^{\psi^i}(t,X_t^{\alpha^{\psi^i}})\right|\leq C(t),\quad\left|C_t^i\right|+\left|D_t^i\right|
\leq C(t)(1+|X_t^{\alpha^{\psi^i}}|),\quad i=1,2.
\end{align}
Furthermore, the standard estimates for the state equation yield that (similar as \eqref{X_bound})
\begin{align}\label{state_stability_L4}
\E\left[\sup_{t\in[0,T]}
\left|X_t^{\alpha^{\psi^1}}-X_t^{\alpha^{\psi^2}}\right|^4\right]\leq C\left|\psi^1-\psi^2\right|^4,
\end{align}
while the standard moment estimate for the linear SDE \eqref{gradient_SDE_compact} together with Lemma~\ref{lem:X_moment} gives that
\begin{align}\label{gradient_L4_bound}
\E\left[\sup_{t\in[0,T]}\left|G_t^i\right|^4\right]\leq C,\quad i=1,2.
\end{align}
We then set $\Delta G_t:=G_t^1-G_t^2$ for $t\in[0,T]$. Subtracting the two equations in \eqref{gradient_SDE_compact}, we obtain
\begin{align}\label{gradient_difference_SDE}
d(\Delta G_t)=&\left[\nabla_xb^{\psi^1}(t,X_t^{\alpha^{\psi^1}})\Delta G_t
			+(\nabla_xb^{\psi^1}(t,X_t^{\alpha^{\psi^1}})-\nabla_xb^{\psi^2}(t,X_t^{\alpha^{\psi^2}}))G_t^2
			+C_t^1-C_t^2\right]\d t\nonumber\\
			&+\left[
			\nabla_x\sigma^{\psi^1}(t,X_t^{\alpha^{\psi^1}})\Delta G_t+(\nabla_x\sigma^{\psi^1}(t,X_t^{\alpha^{\psi^1}})-\nabla_x\sigma^{\psi^2}(t,X_t^{\alpha^{\psi^2}}))G_t^2+D_t^1-D_t^2\right]\d W_t,\nonumber\\
			\Delta G_0=&0.
		\end{align}
By the Lipschitz continuity of the first-order derivatives of $b$ and
$\sigma$ with respect to $(x,a)$, together with the corresponding regularity of $u^\psi$, we have
\begin{align}\label{AB_stability}
&\left|\nabla_xb^{\psi^1}(t,X_t^{\alpha^{\psi^1}})-\nabla_xb^{\psi^2}(t,X_t^{\alpha^{\psi^2}})\right|+\left|\nabla_x\sigma^{\psi^1}(t,X_t^{\alpha^{\psi^1}})-\nabla_x\sigma^{\psi^2}(t,X_t^{\alpha^{\psi^2}})\right|\nonumber\\
&\qquad\qquad\leq C\left(\left|X_t^{\alpha^{\psi^1}}-X_t^{\alpha^{\psi^2}}\right|+\left(1+\left|X_t^{\alpha^{\psi^1}}\right|\right)
\left|\psi^1-\psi^2\right|\right),
\end{align}
and
\begin{align}\label{CD_stability}
\left|C_t^1-C_t^2\right|+\left|D_t^1-D_t^2\right|\leq C\left(\left|X_t^{\alpha^{\psi^1}}-X_t^{\alpha^{\psi^2}}\right|+\left(1+\left|X_t^{\alpha^{\psi^1}}\right|\right)\left|\psi^1-\psi^2\right|\right).
\end{align}
From \eqref{gradient_difference_SDE}, the BDG's inequality and
\eqref{gradient_coefficients_bound}, it follows that, for any $t\in[0,T]$,
\begin{align}\label{gradient_difference_estimate}
\E\left[\sup_{s\in[0,t]}\left|\Delta G_s\right|^2\right]&\leq C\int_0^t\E\left[\left|\Delta G_s\right|^2\right]\d s+C\int_0^t\E\left[\left|(A_s^1-A_s^2)G_s^2\right|^2\right]\d s\\
&\quad+C\int_0^t\E\left[\left|(B_s^1-B_s^2)G_s^2\right|^2\right]\d  s+C\int_0^t\E\left[\left|C_s^1-C_s^2\right|^2+\left|D_s^1-D_s^2\right|^2\right]\d s,\nonumber
\end{align}
where we set $A_t^i=\nabla_x b^{\psi^i}(t,X_t^{\alpha^{\psi^i}}), B_t^i=\nabla_x\sigma^{\psi^i}(t,X_t^{\alpha^{\psi^i}}), i=1,2$ for notation simplicity.
The estimate \eqref{AB_stability} and H\"older's inequality yield that, for $s\in[0,T]$,
\begin{align}\label{A_product_estimate}
\E\left[\left|(A_s^1-A_s^2)G_s^2\right|^2\right]&\leq C\E\left[\left(\left|X_s^{\alpha^{\psi^1}}-X_s^{\alpha^{\psi^2}}\right|^2+\left(1+|X_s^{\alpha^{\psi^2}}|\right)^2
\left|\psi^1-\psi^2\right|^2\right)\left|G_s^2\right|^2\right]\nonumber\\
&\leq C\left(\E\left[\left|X_s^{\alpha^{\psi^1}}-X_s^{\alpha^{\psi^2}}\right|^4\right]\right)^{1/2}
\left(\E\left[\left|G_s^2\right|^4\right]\right)^{1/2}\nonumber\\
&\quad+	C\left(1+\left(\E\left[\left|X_s^{\alpha^{\psi^1}}\right|^4\right]\right)^{\frac12}\left(\E\left[\left|G_s^2\right|^4\right]\right)^{1/2}\right)\left|\psi^1-\psi^2\right|^2\nonumber\\
&\leq C\left|\psi^1-\psi^2\right|^2,
\end{align}
where we used \eqref{state_stability_L4}, \eqref{gradient_L4_bound} and \eqref{AB_stability} in the last inequality. Similarly, we have
\begin{align}\label{B_product_estimate}
\E\left[\left|\left(B_s^1-B_s^2\right)G_s^2\right|^2\right]\leq C\left|\psi^1-\psi^2\right|^2.
\end{align}
Furthermore, it follows from \eqref{state_stability_L4}, \eqref{CD_stability} and the proof of \eqref{A_product_estimate} that
\begin{align}\label{CD_product_estimate}
\E\left[\left|C_s^1-C_s^2\right|^2+\left|D_s^1-D_s^2\right|^2\right]\leq C\left|\psi^1-\psi^2\right|^2.
\end{align}
Combining \eqref{gradient_difference_estimate}-\eqref{CD_product_estimate}, we derive from Gronwall's lemma that
\begin{align}\label{gradient_difference_final}
\E\left[\sup_{t\in[0,T]}\left|\Delta G_t\right|^2\right]\leq C\left|\psi^1-\psi^2\right|^2.
\end{align}
Recalling the definition of $\Delta G_t$, the proof is then complete.
	\end{proof}

\end{document}